\RequirePackage{fix-cm}
\documentclass[smallextended]{svjour3}

\smartqed

\usepackage[T1]{fontenc}
\usepackage{algorithm}
\usepackage{algpseudocode}
\usepackage{amsmath,amssymb}
\usepackage{bm}
\usepackage{booktabs}
\usepackage{enumitem}
\usepackage{graphicx}
\usepackage{hyperref}
\usepackage{mathtools}
\usepackage{microtype}
\usepackage{xcolor}
\usepackage{tikz}
\usepackage[caption=false]{subfig}

\hypersetup{
  colorlinks=true,
  linkcolor=blue!55!black,
  citecolor=blue!55!black,
  urlcolor=blue!55!black
}

\newcommand{\dd}{\,\mathrm{d}}

\newcommand{\Emech}{E_{\mathrm{mech}}}
\newcommand{\Etot}{E_{\mathrm{tot}}}
\newcommand{\Efield}{\boldsymbol E}
\newcommand{\J}{\boldsymbol J}
\newcommand{\uh}{\boldsymbol u_h}
\newcommand{\Bh}{\boldsymbol B_h}
\newcommand{\Jh}{\boldsymbol J_h}
\newcommand{\Eh}{\boldsymbol E_h}

\journalname{Journal of Scientific Computing}

\begin{document}

\title{Structure-preserving operator splitting for 2.5D ideal MHD with an entropy-stable DGSEM and exactly divergence-free compatible finite elements}
\titlerunning{Structure-preserving operator splitting for 2.5D ideal MHD}

\author{Siyuan Fan \and Guosheng Fu\thanks{Corresponding author: Guosheng Fu}}
\authorrunning{Fan and Fu}

\institute{S. Fan and G. Fu \at
  Department of Applied and Computational Mathematics and Statistics,
  University of Notre Dame, Notre Dame, IN 46556, USA\\
  \email{sfan23@nd.edu; gfu@nd.edu}
}

\date{}

\maketitle

\begin{abstract}
We develop a high-order, fully explicit operator-splitting method for 2.5D
ideal magnetohydrodynamics on Cartesian meshes. The hydrodynamic subflow is
advanced by an entropy-stable discontinuous Galerkin spectral element method
equipped with stagewise oscillation elimination and a positivity-preserving
limiter. The magnetic--velocity subflow uses compatible finite elements and
mass-lumped reconstructions of electric field and current density. Its
discrete-curl update exactly preserves the global $H(\mathrm{div})$
divergence-free subspace, while the ideal semidiscretization balances kinetic,
magnetic, and internal energy. Curl-form artificial resistivity and a
direction-resolved velocity filter return removed magnetic and kinetic energy
to internal energy; consequently, the stabilized magnetic stage preserves
positive internal energy and satisfies a discrete entropy inequality. The two
solvers are composed by a second-order hydrodynamic--magnetic--hydrodynamic
Strang splitting, yielding a matrix-free scheme that retains global mass,
nodal admissibility, and magnetic divergence on accepted steps. Smooth
Alfv\'en-wave and advected-vortex tests reveal an even--odd convergence pattern
in the magnetic polynomial degree, verify second-order temporal accuracy, and
show that stabilization preserves high-order accuracy. Field-loop,
Orszag--Tang, rotor, MHD blast-wave, and Kelvin--Helmholtz calculations
demonstrate robust performance for nonsmooth multidimensional flows and
show that artificial resistivity suppresses grid-scale magnetic oscillations while
retaining the resolved structures.
\keywords{Ideal magnetohydrodynamics \and operator splitting \and
entropy-stable DGSEM \and positivity preservation \and compatible finite
elements \and divergence preservation}
\subclass{65M60 \and 65M70 \and 76W05}
\end{abstract}

\section{Introduction}

Ideal magnetohydrodynamics (MHD) describes the interaction of a conducting
compressible fluid with a magnetic field~\cite{goedbloed2019magnetohydrodynamics}.
Its numerical approximation is difficult because the nonlinear wave dynamics,
thermodynamic admissibility, and magnetic geometry cannot be treated
independently. A robust high-order method should propagate shocks
conservatively, maintain positive density and pressure, control entropy
production, represent kinetic--magnetic energy exchange accurately, and
preserve the involution
\(\nabla\cdot\boldsymbol B=0\). These requirements are strongly coupled. In
particular, a nonzero discrete magnetic divergence can alter the wave structure
and jeopardize positivity, while componentwise limiting of the magnetic field
can destroy an otherwise exact divergence constraint
\cite{toth2000divergence,wu2018positivity,wu2018provably}.

Magnetic-divergence control has therefore shaped the development of MHD
solvers. Projection methods remove divergence error after an update
\cite{BRACKBILL1980426}; eight-wave formulations transport it with the flow
\cite{Powell1997}; and constrained-transport methods evolve face-normal
magnetic fluxes through a discrete curl
\cite{evans1988ct,gardiner2005unsplit}. High-order discontinuous Galerkin (DG)
methods have also been constructed with locally or globally divergence-free
magnetic spaces~\cite{Li2005LocallyDD,10.1007/s10915-018-0750-6}. These
approaches differ in whether solenoidality holds locally or globally, whether
the conservative MHD system is modified, and how the magnetic update interacts
with shock stabilization and positivity enforcement.

Entropy stability and physical admissibility provide a second, complementary
line of control. Entropy-conservative two-point fluxes combined with
summation-by-parts flux differencing form the basis of many high-order Euler and
MHD discretizations
\cite{tadmor1987numerical,tadmor2003entropy,gassner2016split,winters2016mhdentropy,derigs2016mhdsolver,chan2018discretely}.
For the Euler equations, scaling limiters enforce positive nodal states while
preserving admissible cell averages~\cite{zhang2010positivity}; for ideal MHD,
positivity analysis shows that admissibility and discrete divergence control
are intrinsically linked~\cite{wu2018positivity,wu2018provably}. Underresolved
solutions are commonly regularized through subcell indicators or artificial
viscosity~\cite{persson2006subcell,barter2010shock}. Entropy filtering and
oscillation-eliminating DG (OEDG) methods provide a complementary approach
\cite{dzanic2023mhdentropyfilter,lu2021oscillation,liu2022essentially,peng2025oedg}.
Recent MHD schemes combine entropy control, divergence-free
spaces, positivity preservation, and oscillation elimination within monolithic
DG formulations~\cite{liu2025mhdentropy,liu2025mhd,liu2026gdf,wu2026nodal}.
The resulting
methods demonstrate what can be achieved, but also expose the complexity of
enforcing all of these structures simultaneously in one discretization.

Compatible finite element methods take a geometric route to the magnetic
constraint. N\'ed\'elec and Raviart--Thomas spaces placed in a discrete de Rham
sequence make the induction update an algebraic curl and hence preserve a
globally divergence-free magnetic field. This construction has produced stable
MHD discretizations with exact magnetic-divergence control and paired energy
transfers~\cite{hu2017stable,hu2019structure}, including transport-stabilized
methods that retain compatible conservation laws~\cite{wimmer2024compatible}.
It is especially natural when current density and electric field are retained
as auxiliary variables, but combining it with a conservative, shock-robust
high-order discretization of the compressible flow remains nontrivial.

Operator splitting offers a way to separate these responsibilities. Early
split finite-volume methods isolated fluid and induction updates
\cite{fuchs2009splitting}. More recently, Dao, Nazarov, and Tomas introduced a
structure-preserving decomposition of compressible ideal MHD into a
hydrodynamic update, a magnetic--velocity update, and an energy transfer
\cite{dao2024structure}; Pang and Wu subsequently combined this framework with
positivity-preserving finite volumes and constrained transport
\cite{pang2025ct}. The present work starts from the same continuous
decomposition. We pair its hydrodynamic map with an entropy-stable
DG spectral element method (DGSEM), stagewise OEDG stabilization, and a
positivity limiter, while the magnetic--velocity map is discretized in a
compatible finite element complex. Electric field and current density are
reconstructed with diagonal mass-lumped products, and the magnetic mass matrix
is applied but never inverted. The resulting update is fully explicit and
retains the discrete curl structure without componentwise filtering of the
magnetic field.

In this work, all fields depend on \((x,y,t)\), while velocity and magnetic
field retain three components; we refer to this setting as 2.5D. The main
contributions are as follows:
\begin{enumerate}[label=(\roman*)]
\item We construct a high-order DGSEM/compatible-finite-element realization of
  the split MHD system on Cartesian meshes. The Euler variables use degree
  \(p\), the magnetic complex uses degree \(m=p\) or \(m=p-1\), and the
  collocated and mass-lumped products make the complete algorithm matrix-free.
\item We identify the properties of each spatial discretization at their
  precise level of validity. The Euler semidiscretization is conservative and
  entropy stable, and its cell averages remain admissible under
  a suitable time-step restriction. The magnetic semidiscretization preserves
  the global \(H(\mathrm{div})\) constraint and, under periodic boundary
  conditions, exactly balances kinetic, magnetic, and internal energy.
\item We introduce stagewise stabilization tailored to the two subflows. The
  Euler and velocity filters use direction-resolved cross-line OEDG scaling,
  while a curl-form artificial resistivity regularizes the magnetic field.
  Resolved kinetic and magnetic energy removed in the magnetic stage is
  returned to internal energy. Consequently, the stabilized magnetic update
  preserves positive internal energy and satisfies a discrete entropy
  inequality.
\item We combine the two solvers in a fully explicit hydrodynamic--magnetic--
  hydrodynamic Strang splitting. The spatial studies reveal a pronounced
  even--odd accuracy pattern in the compatible magnetic degree: even values
  of \(m\) give the more reliable high-order behavior. Separate studies
  verify second-order temporal accuracy and show that stabilization retains
  the observed smooth-solution orders. Robustness is demonstrated on
  challenging nonsmooth multidimensional benchmarks involving shocks,
  current sheets, and shear instabilities. Paired calculations further show
  that artificial resistivity suppresses grid-scale magnetic oscillations
  while retaining the principal resolved structures.
\end{enumerate}

The analysis is restricted to conforming Cartesian rectangular meshes. This
setting makes the auxiliary reconstructions local, supports tensor-product
mass lumping, and exposes the exact discrete curl sequence used in the
divergence proof. Curvilinear geometry, nonconforming refinement,
and full three-dimensional MHD lie outside the present scope.

The paper is organized as follows. Section~\ref{sec:model-split} states the
2.5D equations and the continuous operator split. Section~\ref{sec:numerical-method}
develops the common spatial framework, the two subflow discretizations, and
the fully discrete Strang algorithm. Section~\ref{sec:numerics} presents the
numerical experiments. Section~\ref{sec:conclusion} concludes the paper.

\section{Continuous model and operator splitting}
\label{sec:model-split}

\subsection{The 2.5D ideal-MHD equations}

Let \(\Omega\subset\mathbb R^2\) be a rectangular domain. We consider the
standard ideal-MHD equations \cite{goedbloed2019magnetohydrodynamics} under
translational invariance in the third spatial direction. Thus, all fields
depend on \((x,y,t)\), whereas the velocity and magnetic field retain all
three components:
\[
  \boldsymbol u=(u_x,u_y,u_z)^T,
  \qquad
  \boldsymbol B=(B_x,B_y,B_z)^T.
\]
This two-dimensional, three-component setting is commonly
referred to as 2.5D \cite{toth2000divergence,guillet2019highorder}.
Accordingly, the 2.5D differential operators are
\[
\begin{aligned}
  \nabla f&=(\partial_xf,\partial_yf,0)^T,\\
  \nabla\cdot\boldsymbol a&=\partial_xa_x+\partial_ya_y,\\
  \nabla\times\boldsymbol a
  &=(\partial_ya_z,-\partial_xa_z,
     \partial_xa_y-\partial_ya_x)^T.
\end{aligned}
\]
The analysis below assumes periodic boundary conditions. Boundary conditions
for the numerical experiments are stated with the corresponding test problem.

The conserved variables are the density \(\rho\), momentum
\(\boldsymbol m=\rho\boldsymbol u\), magnetic field \(\boldsymbol B\), and
total energy
\begin{equation}
\label{eq:mhd-energy-decomposition}
  \Etot
  =\rho e+\frac12\rho|\boldsymbol u|^2+\frac12|\boldsymbol B|^2
  =\Emech+\frac12|\boldsymbol B|^2,
  \qquad
  \Emech=\rho e+\frac12\rho|\boldsymbol u|^2.
\end{equation}
We use the ideal-gas equation of state
\[
  p=(\gamma-1)\rho e,
  \qquad \gamma>1,
\]
and normalize the magnetic permeability to one. The admissible set is
specified by \(\rho>0\) and \(p>0\).

In these variables, the conservative 2.5D ideal-MHD equations are
\begin{align}
  \partial_t\rho+\nabla\cdot(\rho\boldsymbol u)&=0,
  \label{eq:mhd-mass}\\
  \partial_t(\rho\boldsymbol u)
  +\nabla\cdot\left[
    \rho\boldsymbol u\otimes\boldsymbol u
    +\left(p+\frac12|\boldsymbol B|^2\right)I
    -\boldsymbol B\otimes\boldsymbol B
  \right]&=0,
  \label{eq:mhd-momentum}\\
  \partial_t\Etot
  +\nabla\cdot\left[
    \left(\Etot+p+\frac12|\boldsymbol B|^2\right)\boldsymbol u
    -(\boldsymbol u\cdot\boldsymbol B)\boldsymbol B
  \right]&=0,
  \label{eq:mhd-energy}\\
  \partial_t\boldsymbol B
  -\nabla\times(\boldsymbol u\times\boldsymbol B)&=0.
  \label{eq:mhd-induction}
\end{align}
The magnetic field is subject to
\begin{equation}
\label{eq:mhd-divergence-constraint}
  \nabla\cdot\boldsymbol B
  =\partial_xB_x+\partial_yB_y=0.
\end{equation}
Applying divergence to \eqref{eq:mhd-induction} gives
\(\partial_t(\nabla\cdot\boldsymbol B)=0\). Thus a divergence-free initial
field remains divergence-free for every smooth solution. Under the periodic
boundary assumption, Eqs.~\eqref{eq:mhd-mass}--\eqref{eq:mhd-energy} also
conserve total mass, momentum, and energy.

For positive density and pressure, divergence-free ideal MHD inherits the
Euler entropy pair. With physical specific entropy
\[
  s=\log p-\gamma\log\rho,
\]
smooth solutions satisfy
\begin{equation}
\label{eq:mhd-physical-entropy}
  \partial_t(\rho s)+\nabla\cdot(\rho s\boldsymbol u)=0.
\end{equation}
Equivalently, the mathematical entropy and entropy flux
\[
  \eta=-\frac{\rho s}{\gamma-1},
  \qquad
  \boldsymbol q=\eta\boldsymbol u
\]
satisfy \(\partial_t\eta+\nabla\cdot\boldsymbol q=0\) for smooth solutions;
entropy-admissible weak solutions satisfy the corresponding inequality
\(\partial_t\eta+\nabla\cdot\boldsymbol q\leq0\).

\subsection{Equivalent Euler--magnetic form}

With the current density and ideal electric field defined by
\begin{equation}
\label{eq:mhd-current-electric}
  \J=\nabla\times\boldsymbol B,
  \qquad
  \Efield=-\boldsymbol u\times\boldsymbol B,
\end{equation}
the Maxwell-stress identity reads
\begin{equation}
\label{eq:maxwell-stress-identity}
  \nabla\cdot\left(
    \frac12|\boldsymbol B|^2I-\boldsymbol B\otimes\boldsymbol B
  \right)
  =\boldsymbol B\times\J
   -(\nabla\cdot\boldsymbol B)\boldsymbol B.
\end{equation}
Hence, for a smooth divergence-free magnetic field, the momentum equation
separates into the Euler flux and the Lorentz force, and the conservative
system is equivalent to the following Euler--magnetic source form:
\begin{align}
  \partial_t\rho+\nabla\cdot(\rho\boldsymbol u)&=0,
  \label{eq:mhd-euler-source-mass}\\
  \partial_t(\rho\boldsymbol u)
  +\nabla\cdot(\rho\boldsymbol u\otimes\boldsymbol u+pI)
  &=-\boldsymbol B\times\J,
  \label{eq:mhd-euler-source-momentum}\\
  \partial_t\Emech
  +\nabla\cdot((\Emech+p)\boldsymbol u)
  &=-(\boldsymbol B\times\J)\cdot\boldsymbol u,
  \label{eq:mhd-euler-source-energy}\\
  \partial_t\boldsymbol B+\nabla\times\Efield&=0.
  \label{eq:mhd-euler-source-induction}
\end{align}
To verify the mechanical-energy equation, dot Faraday's law with
\(\boldsymbol B\) and use
\(\nabla\cdot(\Efield\times\boldsymbol B)
=\boldsymbol B\cdot(\nabla\times\Efield)-\Efield\cdot\J\). This gives the
magnetic-energy balance
\begin{equation}
\label{eq:mhd-magnetic-energy-balance}
  \partial_t\left(\frac12|\boldsymbol B|^2\right)
  +\nabla\cdot(\Efield\times\boldsymbol B)
  =-\Efield\cdot\J.
\end{equation}
Because
\[
  \Efield\cdot\J
  =-(\boldsymbol B\times\J)\cdot\boldsymbol u,
  \qquad
  \Efield\times\boldsymbol B
  =|\boldsymbol B|^2\boldsymbol u
   -(\boldsymbol u\cdot\boldsymbol B)\boldsymbol B,
\]
subtracting \eqref{eq:mhd-magnetic-energy-balance} from the total-energy
equation recovers \eqref{eq:mhd-euler-source-energy}. Thus the Lorentz work
in the mechanical equation is exactly the negative of the electromagnetic
work in the magnetic-energy equation.

The thermodynamic part of the model is unaffected by this exchange. Taking
the scalar product of \eqref{eq:mhd-euler-source-momentum} with
\(\boldsymbol u\) and subtracting the resulting kinetic-energy equation from
\eqref{eq:mhd-euler-source-energy} yields
\begin{equation}
\label{eq:mhd-internal-energy}
  \partial_t(\rho e)+\nabla\cdot(\rho e\boldsymbol u)
  +p\nabla\cdot\boldsymbol u=0.
\end{equation}
This is the compressible Euler internal-energy equation and leads directly to
the entropy balance \eqref{eq:mhd-physical-entropy}.

\subsection{Hydrodynamic and magnetic--velocity subflows}

Operator splittings between fluid and magnetic dynamics have earlier
precedents in the MHD literature \cite{fuchs2009splitting}. The particular
split used here---an Euler subflow
with fixed \(\boldsymbol B\) and a coupled magnetic--velocity subflow with
fixed \(\rho\) and \(e\)---was introduced by Dao et al.~\cite{dao2024structure}
and subsequently adopted by Pang and Wu~\cite{pang2025ct}.

We write the state in mechanical-energy variables as
\[
  \mathsf Q=(\rho,\boldsymbol m,\Emech,\boldsymbol B).
\]
Equations
\eqref{eq:mhd-euler-source-mass}--\eqref{eq:mhd-euler-source-induction} then
define a decomposition of the continuous vector field,
\begin{equation}
\label{eq:continuous-operator-split}
  \partial_t\mathsf Q
  =\mathcal H(\mathsf Q)+\mathcal M(\mathsf Q),
\end{equation}
where \(\mathcal H\) contains the Euler transport terms and \(\mathcal M\)
contains the Lorentz and induction terms.

The hydrodynamic subflow freezes \(\boldsymbol B\) and solves
\begin{align}
  \partial_t\rho+\nabla\cdot(\rho\boldsymbol u)&=0,
  \label{eq:hydro-continuous-mass}\\
  \partial_t(\rho\boldsymbol u)
  +\nabla\cdot(\rho\boldsymbol u\otimes\boldsymbol u+pI)&=0,
  \label{eq:hydro-continuous-momentum}\\
  \partial_t\Emech
  +\nabla\cdot((\Emech+p)\boldsymbol u)&=0,
  \label{eq:hydro-continuous-energy}\\
  \partial_t\boldsymbol B&=0.
  \label{eq:hydro-continuous-magnetic}
\end{align}
We denote its exact flow over an interval of length \(\tau\) by
\(\varphi_H^\tau\). This is the ordinary 2.5D compressible Euler system, so
the hydrodynamic stage carries the conservative transport, entropy
admissibility, and thermodynamic evolution of the split model.

The magnetic--velocity subflow freezes the density and solves
\begin{align}
  \partial_t\rho&=0,
  \label{eq:mag-rho}\\
  \partial_t(\rho\boldsymbol u)&=-\boldsymbol B\times\J,
  \label{eq:mag-momentum}\\
  \partial_t\Emech&=-(\boldsymbol B\times\J)\cdot\boldsymbol u,
  \label{eq:mag-mechanical-energy}\\
  \partial_t\boldsymbol B+\nabla\times\Efield&=0.
  \label{eq:mag-B-conserved}
\end{align}
Its exact flow is denoted by \(\varphi_M^\tau\). Since \(\rho\) is fixed,
taking the scalar product of \eqref{eq:mag-momentum} with
\(\boldsymbol u\) shows that
\[
  \partial_t\left(\frac12\rho|\boldsymbol u|^2\right)
  =-(\boldsymbol B\times\J)\cdot\boldsymbol u.
\]
Comparison with \eqref{eq:mag-mechanical-energy} gives
\(\partial_t(\rho e)=0\), and hence \(\rho\partial_t e=0\).
The magnetic subflow is therefore equivalently expressed in the primitive
variables used by the compatible stage solver:
\begin{align}
  \partial_t\rho&=0,
  &\rho\partial_t e&=0,
  \label{eq:mag-rhoe}\\
  \rho\,\partial_t\boldsymbol u+\boldsymbol B\times\J&=0,
  &\partial_t\boldsymbol B+\nabla\times\Efield&=0,
  \label{eq:mag-velocity-B}\\
  \J&=\nabla\times\boldsymbol B,
  &\Efield&=-\boldsymbol u\times\boldsymbol B.
  \label{eq:mag-constitutive}
\end{align}

The division of variables in the ideal continuous split can be summarized as
\begin{center}
\begin{tabular}{@{}lll@{}}
\toprule
Subflow & Advanced variables & Frozen variables\\
\midrule
\(\varphi_H^\tau\) & \(\rho,\boldsymbol m,\Emech\)
                    & \(\boldsymbol B\)\\
\(\varphi_M^\tau\) & \(\boldsymbol u,\boldsymbol B\)
                    & \(\rho,e\)\\
\bottomrule
\end{tabular}
\end{center}
In the hydrodynamic subflow, entropy evolves exactly as in the Euler
equations. In the ideal magnetic subflow, the thermodynamic state remains
unchanged pointwise, while kinetic and magnetic energy exchange through the
equal and opposite work terms in \eqref{eq:mag-mechanical-energy} and
\eqref{eq:mhd-magnetic-energy-balance}. Both subflows preserve
\(\nabla\cdot\boldsymbol B=0\): the hydrodynamic subflow leaves
\(\boldsymbol B\) fixed, whereas the magnetic increment is a curl.

For smooth divergence-free magnetic fields, summing the two vector fields in
\eqref{eq:continuous-operator-split} recovers the conservative MHD system
\eqref{eq:mhd-mass}--\eqref{eq:mhd-induction}. Moreover, each exact subflow
conserves the spatial integral of total energy under periodic boundary
conditions. The hydrodynamic contribution has flux
\((\Emech+p)\boldsymbol u\), while the magnetic contribution has Poynting
flux \(\Efield\times\boldsymbol B\); their sum is precisely the total-energy
flux in \eqref{eq:mhd-energy}.

The exact flows \(\varphi_H^\tau\) and \(\varphi_M^\tau\) generally do not
commute. Subsection~\ref{sec:algorithm} will combine numerical approximations of
these flows by symmetric Strang splitting~\cite{strang1968construction}.
Before doing so, Subsections
\ref{sec:spatial-prep}--\ref{sec:magnetic} construct the two spatially
discrete stage solvers and establish the properties inherited by each map.

\section{Numerical method}
\label{sec:numerical-method}

In this section, we construct a numerical discretization of the split system
\eqref{eq:continuous-operator-split} derived in
Section~\ref{sec:model-split}. We first introduce the tensor-product finite
element complex and its nodal realization. The hydrodynamic subflow is then
discretized by an entropy-stable discontinuous Galerkin spectral element
method (DGSEM) based on summation-by-parts flux differencing
\cite{gassner2016splitform,YangFu26}. The magnetic--velocity subflow is
discretized with compatible finite elements, drawing on the
\(H(\mathrm{curl})\)--\(H(\mathrm{div})\) framework for exactly
divergence-free, energy-stable MHD discretizations
\cite{hu2017stable,hu2019structure}. Finally, we couple the two update maps by
Strang splitting to obtain the fully discrete MHD algorithm.

\subsection{Discretization preliminaries}
\label{sec:spatial-prep}

\subsubsection{Elementwise de Rham complex}

Let \(K\) be a rectangular element obtained from the reference element
\(\widehat K=[-1,1]^2\) by the affine map
\(F_K:\widehat K\to K\). Let its side lengths in the \(x\)- and
\(y\)-directions be \(h_{x,K}\) and \(h_{y,K}\), respectively. The Jacobian
matrix of \(F_K\) and its determinant are
\[
  DF_K=\operatorname{diag}(h_{x,K}/2,h_{y,K}/2),
  \qquad J_K=\det(DF_K)=\frac{h_{x,K}h_{y,K}}{4}.
\]
For nonnegative integers \(a\) and \(b\), define
\[
  \mathbb Q^{a,b}(K)
  =\{q:q\circ F_K\in
  \mathbb P_a(\widehat x)\otimes\mathbb P_b(\widehat y)\},
  \qquad
  \mathbb Q^k(K)=\mathbb Q^{k,k}(K),
\]
where \(\mathbb P_k\) denotes the univariate polynomials of degree at most
\(k\).

For a general base degree \(k\ge0\), consider the four local spaces
\begin{align}
  \mathcal W^{k+1}(K)&=\mathbb Q^{k+1}(K)\subset H^1(K),
  \label{eq:local-h1-space}\\
  \mathcal N^k(K)&=
  \mathbb Q^{k,k+1}(K)\times\mathbb Q^{k+1,k}(K)
  \subset H(\mathrm{curl};K),
  \label{eq:local-hcurl-space}\\
  \mathcal R^k(K)&=
  \mathbb Q^{k+1,k}(K)\times\mathbb Q^{k,k+1}(K)
  \subset H(\mathrm{div};K),
  \label{eq:local-hdiv-space}\\
  \mathcal V^k(K)&=\mathbb Q^k(K)\subset L^2(K).
  \label{eq:local-l2-space}
\end{align}
We equip each space with nodal degrees of freedom. One such
choice is illustrated in Figure~\ref{fig:local-derham-dofs} for \(k=2\).
The scalar degrees of freedom are point evaluations, whereas the vector
degrees of freedom evaluate one coordinate component. On \(\partial K\), the
vector degrees of freedom determine the tangential trace of
\(\mathcal N^k(K)\) and the normal trace of \(\mathcal R^k(K)\);
the degrees of freedom of \(\mathcal V^k(K)\) remain element-local.

\begin{figure}[H]
  \centering
  \includegraphics[width=\textwidth]{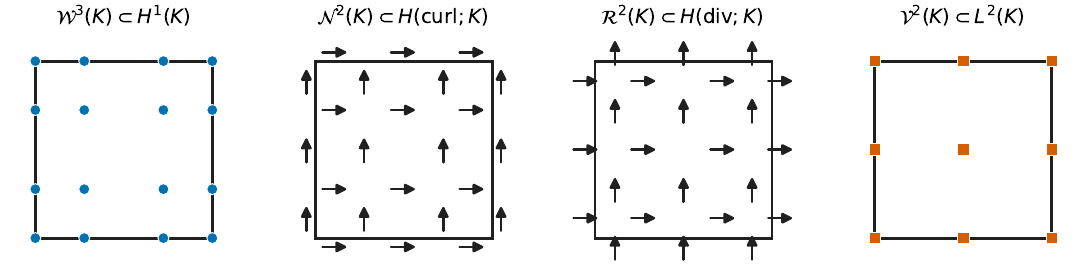}
  \caption{One choice of nodal degrees of freedom for the four local spaces
  with \(k=2\). Circles and squares denote scalar point evaluations in
  \(\mathcal W^3(K)\) and \(\mathcal V^2(K)\), respectively, whereas arrows
  denote coordinate-component evaluations in \(\mathcal N^2(K)\) and
  \(\mathcal R^2(K)\). Along \(\partial K\), the arrows represent tangential
  traces for \(\mathcal N^2(K)\) and normal traces for
  \(\mathcal R^2(K)\); for clarity, these trace arrows are drawn just outside
  the element and aligned with their boundary locations.}
  \label{fig:local-derham-dofs}
\end{figure}

The two vector spaces are the tensor-product N\'ed\'elec and
Raviart--Thomas spaces, respectively
\cite{nedelec1980,raviart1977,arnold2006feec}. On a single element they form
the rotation-equivalent de Rham sequences
\begin{equation}
\label{eq:local-de-rham-complexes}
\begin{array}{ccccc}
  \mathcal W^{k+1}(K) & \xrightarrow{\ \nabla\ } & \mathcal N^k(K)
  & \xrightarrow{\ \nabla\times\ } & \mathcal V^k(K) \\[1.5mm]
  \mathcal W^{k+1}(K) & \xrightarrow{\ \nabla^\perp\ } & \mathcal R^k(K)
  & \xrightarrow{\ \nabla\cdot\ } & \mathcal V^k(K),
\end{array}
\qquad
\nabla^\perp w=(-\partial_yw,\partial_xw)^T.
\end{equation}
Thus \(\nabla\times\mathcal N^k(K)\subset\mathcal V^k(K)\),
\(\nabla^\perp\mathcal W^{k+1}(K)\subset\mathcal R^k(K)\), and
\(\nabla\cdot\mathcal R^k(K)\subset\mathcal V^k(K)\). In particular, the
composition along either row vanishes.

\subsubsection{Assembly on the mesh}

Let \(\mathcal T_h\) be a conforming Cartesian mesh of the periodic domain
\(\Omega\). The global spaces are obtained by assembling the local degrees of
freedom with the continuity appropriate to each sequence in
\eqref{eq:local-de-rham-complexes}:
\begin{align}
  W_h^{k+1}
  &=\{w\in H^1(\Omega):w|_K\in\mathcal W^{k+1}(K)
  \quad\forall K\in\mathcal T_h\},
  \label{eq:h1-space}\\
  N_h^k
  &=\{\boldsymbol g\in H(\mathrm{curl};\Omega):
  \boldsymbol g|_K\in\mathcal N^k(K)
  \quad\forall K\in\mathcal T_h\},
  \label{eq:hcurl-space}\\
  R_h^k
  &=\{\boldsymbol r\in H(\mathrm{div};\Omega):
  \boldsymbol r|_K\in\mathcal R^k(K)
  \quad\forall K\in\mathcal T_h\},
  \label{eq:hdiv-space}\\
  V_h^k
  &=\{v\in L^2(\Omega):v|_K\in\mathcal V^k(K)
  \quad\forall K\in\mathcal T_h\}.
  \label{eq:l2-space}
\end{align}
Accordingly, scalar values are shared in \(W_h^{k+1}\), tangential traces are
shared in \(N_h^k\), and normal traces are shared in \(R_h^k\); no continuity
is imposed on \(V_h^k\). The elementwise differential maps assemble into
\[
  \nabla W_h^{k+1}\subset N_h^k,
  \qquad \nabla\times N_h^k\subset V_h^k,
  \qquad \nabla^\perp W_h^{k+1}\subset R_h^k,
  \qquad \nabla\cdot R_h^k\subset V_h^k.
\]

\subsubsection{Nodal bases, quadrature, and mass matrices}

We now choose bases compatible with the local degree-of-freedom layout. For
degree \(r\ge1\), let
\(\{\xi_i^{(r)},\omega_i^{(r)}\}_{i=0}^r\) be the one-dimensional
Gauss--Lobatto--Legendre (GLL) nodes and weights, and let
\(\{\ell_i^{(r)}\}_{i=0}^r\) be the associated Lagrange basis. For \(r=0\),
we use the constant basis \(\ell_0^{(0)}=1\) with the one-point rule
\(\xi_0^{(0)}=0\), \(\omega_0^{(0)}=2\). Let
\(\{\zeta_i^{(k)},\varpi_i^{(k)}\}_{i=0}^k\) be the \((k+1)\)-point
Gauss--Legendre rule, with degree-\(k\) Lagrange basis
\(\{g_i^{(k)}\}_{i=0}^k\).

On \(\widehat K\), the scalar spaces use the tensor bases
\[
  \mathcal W^{k+1}:\quad
  \ell_i^{(k+1)}(\widehat x)\ell_j^{(k+1)}(\widehat y),
  \qquad
  \mathcal V^k:\quad
  \ell_i^{(k)}(\widehat x)\ell_j^{(k)}(\widehat y).
\]
The N\'ed\'elec basis is
\begin{equation}
\label{eq:hcurl-basis}
\begin{aligned}
  g_i^{(k)}(\widehat x)\ell_j^{(k+1)}(\widehat y)\boldsymbol e_x,
  &\quad 0\le i\le k,\quad 0\le j\le k+1,\\
  \ell_i^{(k+1)}(\widehat x)g_j^{(k)}(\widehat y)\boldsymbol e_y,
  &\quad 0\le i\le k+1,\quad 0\le j\le k,
\end{aligned}
\end{equation}
and the Raviart--Thomas basis is its rotated counterpart,
\begin{equation}
\label{eq:hdiv-basis}
\begin{aligned}
  \ell_i^{(k+1)}(\widehat x)g_j^{(k)}(\widehat y)\boldsymbol e_x,
  &\quad 0\le i\le k+1,\quad 0\le j\le k,\\
  g_i^{(k)}(\widehat x)\ell_j^{(k+1)}(\widehat y)\boldsymbol e_y,
  &\quad 0\le i\le k,\quad 0\le j\le k+1.
\end{aligned}
\end{equation}
The physical scalar fields are obtained by the usual pullback, whereas the
vector fields use the covariant and contravariant Piola maps:
\[
  v_h\circ F_K=\widehat{v}_h,\qquad
  \boldsymbol a_h\circ F_K=DF_K^{-T}\widehat{\boldsymbol a}_h,
  \qquad
  \boldsymbol b_h\circ F_K=J_K^{-1}DF_K\widehat{\boldsymbol b}_h.
\]
These transformations correspond to the scalar, \(H(\mathrm{curl})\), and
\(H(\mathrm{div})\) spaces, respectively.

We pair each tensor-product basis with the tensor quadrature rule formed from
its nodal sets in the two coordinate directions. The quadrature points thus
coincide with the interpolation points, so the matrix representing the
corresponding discrete inner product in the nodal basis is diagonal. The
space-specific products used below are defined next. For the scalar \(L^2\)
space \(V_h^k\), the element product is
\begin{equation}
\label{eq:quad-scalar-element}
  (v_h,w_h)_{V,h,K}^{k}
  =|J_K|\sum_{i,j=0}^{k}\omega_i^{(k)}\omega_j^{(k)}
  \widehat v_h(\xi_i^{(k)},\xi_j^{(k)})
  \widehat w_h(\xi_i^{(k)},\xi_j^{(k)}),
\end{equation}
with mesh product
\begin{equation}
\label{eq:quad-scalar-mesh}
  (v_h,w_h)_{V,h}^{k}
  =\sum_{K\in\mathcal T_h}(v_h,w_h)_{V,h,K}^{k}.
\end{equation}
For the continuous scalar space \(W_h^{k+1}\), the degree-\(k+1\) GLL rule gives
\begin{equation}
\label{eq:quad-cg-element}
\begin{aligned}
  (v_h,w_h)_{W,h,K}^{k+1}
  ={}&|J_K|\sum_{i,j=0}^{k+1}
  \omega_i^{(k+1)}\omega_j^{(k+1)}\\
  &\quad\times
  \widehat v_h(\xi_i^{(k+1)},\xi_j^{(k+1)})
  \widehat w_h(\xi_i^{(k+1)},\xi_j^{(k+1)}).
\end{aligned}
\end{equation}
Since assembly only identifies coincident GLL nodes, the global mass matrix
remains diagonal. We denote the assembled product by
\begin{equation}
\label{eq:quad-cg-mesh}
  (v_h,w_h)_{W,h}^{k+1}
  =\sum_{K\in\mathcal T_h}(v_h,w_h)_{W,h,K}^{k+1}.
\end{equation}

For \(N_h^k\), the matching tensor rule is Gauss--GLL for the
\(x\)-component and GLL--Gauss for the \(y\)-component. If
\(\widehat{\boldsymbol a}_h=(\widehat a_x,\widehat a_y)^T\) and
\(\widehat{\boldsymbol b}_h=(\widehat b_x,\widehat b_y)^T\), then
\begin{equation}
\label{eq:quad-hcurl-element}
\begin{aligned}
  (\boldsymbol a_h,\boldsymbol b_h)_{N,h,K}^{k}
  ={}&\frac{h_{y,K}}{h_{x,K}}
  \sum_{i=0}^{k}\sum_{j=0}^{k+1}
  \varpi_i^{(k)}\omega_j^{(k+1)}
  \widehat a_x(\zeta_i^{(k)},\xi_j^{(k+1)})
  \widehat b_x(\zeta_i^{(k)},\xi_j^{(k+1)})\\
  &+\frac{h_{x,K}}{h_{y,K}}
  \sum_{i=0}^{k+1}\sum_{j=0}^{k}
  \omega_i^{(k+1)}\varpi_j^{(k)}
  \widehat a_y(\xi_i^{(k+1)},\zeta_j^{(k)})
  \widehat b_y(\xi_i^{(k+1)},\zeta_j^{(k)}).
\end{aligned}
\end{equation}
The two blocks are diagonal because their quadrature points coincide with the
two component bases in \eqref{eq:hcurl-basis}. The assembled product is
\begin{equation}
\label{eq:quad-hcurl-mesh}
  (\boldsymbol a_h,\boldsymbol b_h)_{N,h}^{k}
  =\sum_{K\in\mathcal T_h}
  (\boldsymbol a_h,\boldsymbol b_h)_{N,h,K}^{k}.
\end{equation}
This is the Gauss-point mass-lumping construction for edge elements
\cite{cohen1998mass}.

The subproblem discretizations below select the degree appropriate to each
field and suppress the degree superscript on these products. In every case in
which an inverse mass is required, the basis and quadrature are paired as
above, so the inverse action reduces to pointwise division.

\subsection{Hydrodynamic subflow: entropy-stable DGSEM}
\label{sec:hydro}

The hydrodynamic substep advances the Euler subsystem
\eqref{eq:hydro-continuous-mass}--\eqref{eq:hydro-continuous-energy} using the
entropy-stable DGSEM and stagewise oscillation elimination (OE) described
below. The damping-based DG framework was introduced for scalar conservation
laws in \cite{lu2021oscillation} and extended to hyperbolic systems in
\cite{liu2022essentially}. The OEDG procedure was subsequently introduced in
\cite{peng2025oedg}; its entropy-stable DGSEM realization used here follows
\cite{YangFu26}. In this substep the magnetic field is frozen. The evolved
conservative state is
\[
  U=(\rho,m_x,m_y,m_z,\Emech)^T,
  \qquad \boldsymbol m=\rho\boldsymbol u,
\]
with
\[
  p=(\gamma-1)\left(\Emech
  -\frac{m_x^2+m_y^2+m_z^2}{2\rho}\right).
\]
Thus the spatial dependence is two-dimensional, while all three velocity
components enter the pressure and energy. The physical Euler fluxes used in the
hydrodynamic step are
\[
F_x(U)=
\begin{pmatrix}
m_x\\
m_xu_x+p\\
m_yu_x\\
m_zu_x\\
(\Emech+p)u_x
\end{pmatrix},
\qquad
F_y(U)=
\begin{pmatrix}
m_y\\
m_xu_y\\
m_yu_y+p\\
m_zu_y\\
(\Emech+p)u_y
\end{pmatrix}.
\]
The \(z\)-momentum component is advected by the in-plane velocity; the pressure
term appears only in the momentum component aligned with the spatial direction
\(d\).

For the hydrodynamic subproblem, choose a polynomial degree \(p\ge1\) and
take \(k=p\) in the discontinuous space \(V_h^k\) introduced in
Section~\ref{sec:spatial-prep}. The conservative variables are approximated by
\[
  U_h\in [V_h^p]^5 .
\]
In this subsection, the parenthesized degree label on the GLL nodes, weights,
and basis functions is suppressed for readability.
Specifically, on each element \(K\), the DGSEM unknown is stored at the
tensor-product GLL nodes:
\[
  U_h|_K(x,y)=
  \sum_{i,j=0}^p U^K_{ij}\,
  \ell_i(\widehat x)\ell_j(\widehat y),
  \qquad U^K_{ij}\in\mathbb R^5 .
\]
The scalar product \eqref{eq:quad-scalar-element} gives a diagonal lumped mass
matrix on each element. We denote the diagonal entry associated with the nodal
degree of freedom \((i,j)\) by
\[
  M^K_{ij}:=|J_K|\omega_i\omega_j .
\]
For the five-component Euler state, this entry is applied componentwise.
For the one-dimensional GLL basis define
\[
  D_{ij}=\ell_j'(\xi_i),\qquad
  M=\operatorname{diag}(\omega_0,\ldots,\omega_p),\qquad
  Q=MD .
\]
Here \(D\), \(M\), and \(Q\) are \((p+1)\times(p+1)\) matrices.
The GLL derivative satisfies the summation-by-parts identity
\begin{equation}
\label{eq:euler-sbp}
  Q+Q^T=B,\qquad
  B=\operatorname{diag}(-1,0,\ldots,0,1),
\end{equation}
which is the discrete integration-by-parts mechanism behind conservation and
entropy stability.

\subsubsection{Entropy-conservative flux differencing and interface dissipation}

For the volume discretization, we use Chandrashekar's logarithmic-average
entropy-conservative flux \cite{chandrashekar2013kinetic}, written here in its
2.5D form. For a scalar \(a\) with two admissible states \(a_L\) and \(a_R\),
define the arithmetic and logarithmic means
\[
  \{\!\{a\}\!\}=\frac{a_L+a_R}{2},
  \qquad
  a^{\log}=\frac{a_R-a_L}{\log a_R-\log a_L},
\]
with the continuous extension \(a^{\log}=a_L\) when \(a_L=a_R\). Let
\(\beta=\rho/(2p)\),
\[
  \widehat p=\frac{\{\!\{\rho\}\!\}}{2\{\!\{\beta\}\!\}},
  \qquad
  \widehat h=
  \frac{1}{2(\gamma-1)\beta^{\log}}
  -\frac12\{\!\{|\boldsymbol u|^2\}\!\}
  +|\{\!\{\boldsymbol u\}\!\}|^2
  +\frac{\widehat p}{\rho^{\log}} .
\]
For \(d\in\{x,y\}\), the two-point flux is
\begin{equation}
\label{eq:euler-chandrashekar-flux}
  F_d^{\mathrm{ec}}(U_L,U_R)=
  \begin{pmatrix}
  \rho^{\log}\{\!\{u_d\}\!\}\\
  \rho^{\log}\{\!\{u_x\}\!\}\{\!\{u_d\}\!\}+\delta_{xd}\widehat p\\
  \rho^{\log}\{\!\{u_y\}\!\}\{\!\{u_d\}\!\}+\delta_{yd}\widehat p\\
  \rho^{\log}\{\!\{u_z\}\!\}\{\!\{u_d\}\!\}\\
  \rho^{\log}\{\!\{u_d\}\!\}\widehat h
  \end{pmatrix}.
\end{equation}
This flux is symmetric, consistent, and entropy conservative. With the
Euler entropy variables \(V=\partial\eta/\partial U\) and entropy potentials
\(\psi_d=V^TF_d-q_d\), it satisfies Tadmor's identity
\[
  (V_R-V_L)^T F_d^{\mathrm{ec}}(U_L,U_R)
  =\psi_d(U_R)-\psi_d(U_L).
\]

For a Cartesian rectangle it is convenient to absorb the constant metric factors
into contravariant fluxes and define
\[
  \mathcal F_x(U_L,U_R)=\frac{h_{y,K}}{2}F_x^{\mathrm{ec}}(U_L,U_R),
  \qquad
  \mathcal F_y(U_L,U_R)=\frac{h_{x,K}}{2}F_y^{\mathrm{ec}}(U_L,U_R).
\]
The flux-differencing volume residual is
\begin{equation}
\label{eq:euler-volume-residual}
\begin{aligned}
  R^{K,\mathrm{vol}}_{ij}
  ={}&2\sum_{r=0}^p Q_{ir}\omega_j\,
      \mathcal F_x(U^K_{ij},U^K_{rj})  +2\sum_{s=0}^p \omega_i Q_{js}\,
      \mathcal F_y(U^K_{ij},U^K_{is}) .
\end{aligned}
\end{equation}

We take the local Lax--Friedrichs flux as the numerical interface flux. For
\(d\in\{x,y\}\),
\begin{equation}
\label{eq:euler-lf-flux}
  \widehat F^{\mathrm{LF}}_d(U^-,U^+)
  =\frac12\left(F_d(U^-)+F_d(U^+)\right)
  -\frac12\lambda_d(U^-,U^+)(U^+-U^-),
\end{equation}
where
\[
  \lambda_d(U^-,U^+)
  =\max_{\sigma\in\{-,+\}}\left(
  |u_d^\sigma|+c^\sigma\right),
  \qquad c^\sigma=\sqrt{\gamma p^\sigma/\rho^\sigma}.
\]
The traces \(U^-\) and \(U^+\) are ordered in the positive coordinate direction
on each Cartesian face; on periodic boundaries, the exterior trace is taken
from the periodic neighbor.

Let
\[
  \widetilde F_x=\frac{h_{y,K}}{2}F_x(U_h),\qquad
  \widetilde F_y=\frac{h_{x,K}}{2}F_y(U_h),
\]
and define the corresponding contravariant numerical fluxes by
\[
  \widetilde F_x^*=\frac{h_{y,K}}{2}\widehat F_x^{\mathrm{LF}},
  \qquad
  \widetilde F_y^*=\frac{h_{x,K}}{2}\widehat F_y^{\mathrm{LF}} .
\]
Set \(\Delta\widetilde F_x=\widetilde F_x-\widetilde F_x^*\) and
\(\Delta\widetilde F_y=\widetilde F_y-\widetilde F_y^*\). The nodal DGSEM
update can be expressed as: for \(0\le i,j\le p\),
\begin{equation}
\label{eq:euler-nodal-index}
\begin{aligned}
  M^K_{ij}\frac{dU^K_{ij}}{dt}+R^{K,\mathrm{vol}}_{ij}
  &+\omega_i\!\left[(\Delta\widetilde F_y)_{i,0}\delta_{j,0}
  -(\Delta\widetilde F_y)_{i,p}\delta_{j,p}\right]\\
  &+\omega_j\!\left[(\Delta\widetilde F_x)_{0,j}\delta_{i,0}
  -(\Delta\widetilde F_x)_{p,j}\delta_{i,p}\right]=0,
\end{aligned}
\end{equation}
where $\delta_{i,j}$ is the Kronecker delta.
We also write
\eqref{eq:euler-nodal-index} abstractly as
\begin{equation}
\label{eq:euler-semidiscrete}
  M^K_{ij}\frac{dU^K_{ij}}{dt}
  +R^K_{ij}(U_h)=0,\qquad 0\le i,j\le p.
\end{equation}
Here \(R^K_{ij}\) combines the volume and boundary contributions.

\begin{theorem}[Euler DGSEM conservation and entropy stability]
\label{thm:euler-dgsem-entropy}
On a conforming Cartesian rectangular mesh with periodic boundary conditions,
assume that the nodal states satisfy \(\rho_h>0\) and \(p_h>0\). Then the update
\eqref{eq:euler-nodal-index} conserves the global Euler variables:
\[
  \frac{d}{dt}\sum_{K\in\mathcal T_h}\sum_{i,j=0}^p
  M^K_{ij}U^K_{ij}=0.
\]
If, in addition, the interface flux satisfies Tadmor's inequality
\begin{equation}
\label{eq:euler-interface-entropy-condition}
  (V^+-V^-)^T\widehat F_n(U^-,U^+)
  -\bigl(\psi_n(U^+)-\psi_n(U^-)\bigr)\le 0,
\end{equation}
then the update satisfies the discrete entropy inequality
\[
  \frac{d}{dt}\sum_{K\in\mathcal T_h}\sum_{i,j=0}^p
  M^K_{ij}\eta(U^K_{ij})\le 0 .
\]
\end{theorem}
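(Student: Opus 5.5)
The plan follows the standard entropy-stable DGSEM argument, applied along each tensor-product line separately. Write the $x$-direction volume term on the line $j$ fixed as $2\omega_j\sum_r Q_{ir}\mathcal F_x(U_{ij},U_{rj})$. The $y$-direction term and the corresponding face terms are handled identically, and the rectangular metric factors are constants that simply scale the fluxes.

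\textbf{Conservation.} Sum \eqref{eq:euler-nodal-index} over $i$ for fixed $j$. Since $\mathcal F_x$ is symmetric, the split $2Q = (Q-Q^T)+(Q+Q^T)$ together with the SBP identity \eqref{eq:euler-sbp} lets the skew part cancel under the double sum. The $B$-part then leaves
\[
\omega_j\bigl(\mathcal F_x(U_{pj},U_{pj})-\mathcal F_x(U_{0j},U_{0j})\bigr)=\omega_j\bigl(\widetilde F_x(U_{pj})-\widetilde F_x(U_{0j})\bigr),
\]
using consistency of the entropy-conservative flux. Adding the face correction $\Delta\widetilde F_x$ turns these endpoint physical fluxes into the numerical fluxes $\widetilde F_x^*$. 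The single-valued Lax--Friedrichs flux then appears with opposite signs in the two neighbouring elements. The mesh is conforming and Cartesian, so face nodes match and the metric factors $h_{y,K}/2$ agree across the face; with periodicity, everything telescopes to zero.

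\textbf{Entropy.} Contract \eqref{eq:euler-nodal-index} with the nodal entropy variables $V_{ij}=V(U^K_{ij})$. These are well defined because $\rho_h,p_h>0$, and the chain rule gives $M^K_{ij}\,d\eta(U^K_{ij})/dt=V_{ij}^TM^K_{ij}\,dU^K_{ij}/dt$. For the volume part on one line, split $2Q=(Q-Q^T)+B$. The skew part yields $\sum_{i,r}(Q_{ir}-Q_{ri})V_i^T\mathcal F(U_i,U_r)$. Symmetrizing, this becomes $\sum_{i,r}Q_{ir}(V_i-V_r)^T\mathcal F(U_i,U_r)$. Tadmor's identity converts it to $\sum_{i,r}Q_{ir}(\psi(U_i)-\psi(U_r))$. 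Because $D$ differentiates constants exactly, $\sum_rQ_{ir}=0$, and by SBP $\sum_iQ_{ir}=B_{rr}$. Hence this equals $-(\psi_p-\psi_0)$ times the metric factor. The $B$-part contributes $V_p^T\widetilde F(U_p)-V_0^T\widetilde F(U_0)$. Together, the volume term gives the endpoint quantity $q_x(U_p)-q_x(U_0)$, since $V^TF-\psi=q$.

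Adding the face terms replaces $V^T\widetilde F$ at the endpoints by $V^T\widetilde F^*$. The element's face contribution on its right face is therefore $V_p^T\widetilde F^*-\psi(U_p)$, and on its left face it enters with the opposite sign. At a shared face with traces $U^-$ (left element) and $U^+$ (right element), the two contributions combine to
\[
(V^--V^+)^T\widehat F+\psi(U^+)-\psi(U^-).
\]
The metric factor here is positive, so this sum is $\ge0$ precisely by \eqref{eq:euler-interface-entropy-condition}. It enters the equation $\sum M\,d\eta/dt+\dots=0$ with a plus sign, which gives $d/dt\sum M\eta=-(\text{nonnegative})\le0$.

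\textbf{Main obstacle.} The delicate step is the sign bookkeeping in the entropy computation. I would verify it on a single 1D line with two elements before assembling. The points to check are the orientation of $B=\operatorname{diag}(-1,0,\dots,1)$, the signs of the $\delta_{i,0},\delta_{i,p}$ face terms, and the convention that $U^-$ and $U^+$ are ordered in the positive coordinate direction. Getting these right is what makes the interface quantity come out exactly as the left side of \eqref{eq:euler-interface-entropy-condition} with the correct sign. Everything else reduces to SBP identities and symmetry of the two-point flux.
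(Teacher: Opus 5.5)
Your proposal is correct and follows the same route the paper takes; the paper gives only a one-line sketch, and you fill in its details. Symmetry and Tadmor's identity reduce each line's volume term to $q(U_p)-q(U_0)$, and the face corrections turn this into $V^T\widetilde F^*-\psi$ at each endpoint. On the conforming periodic Cartesian mesh, GLL face nodes and metric factors match across faces, so each shared face contributes $-\bigl[(V^+-V^-)^T\widehat F-(\psi(U^+)-\psi(U^-))\bigr]\ge 0$ by \eqref{eq:euler-interface-entropy-condition}, while the conserved fluxes telescope. Your sign bookkeeping checks out.
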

The proof is the standard SBP flux-differencing argument: symmetry and
Tadmor's identity handle the volume terms, the interface entropy production is
nonpositive by \eqref{eq:euler-interface-entropy-condition}, and periodic
Cartesian faces cancel in conservative pairs. For the classical
Lax--Friedrichs flux \eqref{eq:euler-lf-flux}, a sufficient condition for the
interface entropy inequality \eqref{eq:euler-interface-entropy-condition} is
that
\(\lambda_d\) bound the largest wave speed in the exact Riemann fan
\cite{guermond2016wavespeed}.

\subsubsection{Stage stabilization: oscillation elimination and positivity
preservation}
We discretize the semidiscrete Euler system \eqref{eq:euler-semidiscrete} in
time with an explicit Runge--Kutta method.
Each provisional Runge--Kutta stage is stabilized before it enters the next
residual evaluation. The stage treatment consists of an
oscillation-eliminating procedure followed by conservative
positivity-preserving scaling. We describe these two operations separately and
then combine them in the fully discrete update.

\paragraph{Oscillation-eliminating procedure.}
Following the OEDG construction \cite{peng2025oedg} and its entropy-stable
DGSEM realization \cite{YangFu26}, the OE procedure acts elementwise on a
hierarchical decomposition of the conservative state. For
\(0\le\ell\le p\), let
\(\Pi_K^\ell:\mathbb Q^p(K)\to\mathbb Q^\ell(K)\) denote the projection
defined by the degree-\(p\) GLL product:
\begin{equation}
\label{eq:euler-oe-projection}
  (\Pi_K^\ell w_h-w_h,v_h)_{V,h,K}^{p}=0
  \qquad\forall v_h\in\mathbb Q^\ell(K).
\end{equation}
We apply this projection componentwise to \(U_h\). With
\(\mathcal S_K^\ell=\Pi_K^\ell-\Pi_K^{\ell-1}\), the nested polynomial spaces
give the incremental decomposition
\begin{equation}
\label{eq:euler-oe-increments}
  U_h|_K
  =\Pi_K^0U_h+(\Pi_K^1-\Pi_K^0)U_h+\cdots
   +(\Pi_K^p-\Pi_K^{p-1})U_h
  =\Pi_K^0U_h+\sum_{\ell=1}^p\mathcal S_K^\ell U_h.
\end{equation}
The constant projection \(\Pi_K^0U_h\) is the quadrature-weighted element
average, while every increment \(\mathcal S_K^\ell U_h\) has zero element
average.
We use the direction-resolved Cartesian sensor with cross-line scaling
introduced in \cite{FuLiu26}. For \(I\in\{x,y\}\), let
\(F_{I,K}^{-}\) and \(F_{I,K}^{+}\) be the two faces of \(K\) normal to
coordinate \(I\), and let \(h_{I,K}\) be the corresponding element width.
For a scalar field \(w_h\), define the normal-jump measure
\begin{equation}
\label{eq:euler-oe-jump}
  J_{r,I,K}(w_h)
  =\sum_{F\in\{F_{I,K}^{-},F_{I,K}^{+}\}}
  \left(
  \frac{1}{|F|}\int_F
  \left[\!\left[\partial_I^r w_h\right]\!\right]^2\,\dd s
  \right)^{1/2},
  \qquad 0\le r\le p .
\end{equation}
For each direction \(I\), let \(\mathcal X_{I,K}\) denote the row or column of
elements through \(K\) that extends in direction \(I\), with the transverse
coordinate fixed. Let \(\overline w_\Omega\) be the global mean
and \(\Delta_\Omega(w_h)=\|w_h-\overline w_\Omega\|_{L^\infty(\Omega)}\).
\begin{samepage}
The cross-line mean and amplitude are
\begin{equation}
\label{eq:euler-oe-amplitude}
\begin{aligned}
  \overline w_{I,K}
  &=\frac{\displaystyle\sum_{K'\in\mathcal X_{I,K}}
       \int_{K'}w_h\,\dd x}
      {\displaystyle\sum_{K'\in\mathcal X_{I,K}}|K'|},\\
  \Delta_{I,K}(w_h)
  &=\max\!\left\{
    \max_{K'\in\mathcal X_{I,K}}
      \|w_h-\overline w_{I,K}\|_{L^\infty(K')},
    10^{-6}\Delta_\Omega(w_h)\right\}.
\end{aligned}
\end{equation}
\end{samepage}
The global term supplies a small floor for a nearly constant coordinate line.
Figure~\ref{fig:euler-oe-cross-line-scaling} contrasts this construction with
a single domain-wide normalization. For the Euler state, the cross-line
amplitude is shared by all conservative components:
\begin{equation}
\label{eq:euler-oe-shared-amplitude}
  \Delta_{I,K}(U_h)
  =\max_{1\le q\le5}\Delta_{I,K}(U_h^{(q)}).
\end{equation}

\begin{figure}[H]
  \centering
  \resizebox{\linewidth}{!}{%
    \begin{tikzpicture}[
  x=0.86cm,
  y=0.86cm,
  every node/.style={font=\small},
  panel/.style={draw=black!70, line width=0.8pt},
  mesh/.style={draw=black!28, line width=0.35pt},
  guide/.style={line width=1.5pt},
  >=latex
]
  \definecolor{oeBlue}{RGB}{0,114,178}
  \definecolor{oeOrange}{RGB}{230,159,0}
  \definecolor{oeVermillion}{RGB}{213,94,0}

  \begin{scope}
    \fill[black!3] (0,0) rectangle (7,5);
    \fill[oeBlue!18] (1,2) rectangle (2,3);
    \fill[oeVermillion!70] (5,4) rectangle (6,5);
    \draw[mesh,step=1] (0,0) grid (7,5);
    \draw[panel] (0,0) rectangle (7,5);

    \node[font=\normalsize] at (3.5,5.42) {Domain-wide scaling};
    \node[font=\normalsize] at (1.5,2.5) {$K$};
    \node[align=center, anchor=south] (excursion) at (5.5,3.25)
      {largest excursion};
    \draw[->, oeVermillion, line width=0.9pt]
      (excursion.north) -- (5.5,3.96);
    \draw[->, dashed, black!65, line width=0.8pt]
      (5.15,3.92) to[bend right=18]
      node[pos=0.52, above, fill=white, inner sep=1pt]
      {$\Delta_\Omega(w_h)$}
      (1.92,3.02);
    \node[anchor=north, align=center] at (3.5,-0.28)
      {one scale shared by all elements};
  \end{scope}

  \begin{scope}[xshift=9.1cm]
    \fill[black!3] (0,0) rectangle (7,5);
    \fill[oeBlue!18] (0,2) rectangle (7,3);
    \fill[oeOrange!24] (2,0) rectangle (3,5);
    \fill[black!72] (2,2) rectangle (3,3);
    \draw[mesh,step=1] (0,0) grid (7,5);
    \draw[panel] (0,0) rectangle (7,5);

    \draw[guide,oeBlue] (0,2.5) -- (7,2.5);
    \draw[guide,oeOrange,dashed] (2.5,0) -- (2.5,5);
    \node[font=\normalsize] at (3.5,5.42) {Cross-line scaling};
    \node[text=white,font=\normalsize] at (2.5,2.5) {$K$};

    \node[anchor=south, fill=white, fill opacity=0.82, text opacity=1,
          inner sep=1.5pt, text=oeBlue!80!black] at (4.75,2.56)
      {$\mathcal X_{x,K}$};
    \node[rotate=90, anchor=center, fill=white, fill opacity=0.82,
          text opacity=1, inner sep=1.5pt, text=oeOrange!70!black]
      at (2.22,3.75) {$\mathcal X_{y,K}$};
    \node[anchor=north, align=center] at (3.5,-0.28)
      {direction-matched scales through $K$};
  \end{scope}
\end{tikzpicture}%
  }
  \caption{Domain-wide and direction-matched normalization for the OE
  indicator. Left: one amplitude \(\Delta_\Omega(w_h)\) is shared by every
  element, so an excursion far from \(K\) may control the scale. Right: the
  cross-line amplitudes for \(K\) are computed separately along the horizontal
  line \(\mathcal X_{x,K}\) and vertical line \(\mathcal X_{y,K}\). Accordingly,
  jumps across faces normal to \(x\) and \(y\) are normalized by variation along
  the matching coordinate direction. The schematic shows a scalar field;
  for the Euler state, the largest amplitude among the five conservative
  components supplies their common denominator.}
  \label{fig:euler-oe-cross-line-scaling}
\end{figure}

Writing \(U_h=(U_h^{(1)},\ldots,U_h^{(5)})^T\), we normalize the componentwise
jump measures by the shared amplitude \eqref{eq:euler-oe-shared-amplitude} and
then take their maximum:
\begin{equation}
\label{eq:euler-oe-sigma}
  \sigma_{r,I,K}(U_h)
  =\max_{1\le q\le5}
   \frac{(2r+1)h_{I,K}^{r}}{2(r!)}
   \frac{J_{r,I,K}(U_h^{(q)})}{\Delta_{I,K}(U_h)},
  \qquad 0\le r\le p,
\end{equation}
where a component that is constant to roundoff contributes zero. Sharing the
denominator prevents a nearly constant component from being interpreted as an
order-one feature merely because its own amplitude is small. An \(x\)-normal
jump is scaled by the line varying in \(x\) at fixed \(y\), and analogously in
the \(y\)-direction.

Let \(\overline U_K\) be the quadrature-weighted cell average and let \(c_K\)
be the sound speed recovered from this state. The direction-matched Euler rate
is
\begin{equation}
\label{eq:euler-oe-rate}
  \lambda_{E,I,K}
  =(2p+1)\frac{|\overline u_{I,K}|+c_K}{h_{I,K}}.
\end{equation}
For a stage time step \(\tau\), define the damping factor
\begin{equation}
\label{eq:euler-oe-factor}
\begin{aligned}
  \mathcal D_{\ell,K}
  &=\exp\!\left[-s_H\tau\Theta_{\ell,K}\right],\quad\text{where }
  \Theta_{\ell,K}
  =\sum_{I\in\{x,y\}}\lambda_{E,I,K}
    \sum_{r=0}^{\ell}\sigma_{r,I,K}.
\end{aligned}
\end{equation}
The Cartesian OE update damps each hierarchical increment independently:
\begin{equation}
\label{eq:euler-oe-map}
  (\mathcal P_{\mathrm{OE}}U_h)|_K
  =\Pi_K^0U_h+\sum_{\ell=1}^p
   \mathcal D_{\ell,K}\mathcal S_K^\ell U_h.
\end{equation}
where \(s_H\ge0\) is the hydrodynamic-stage OE scale factor. For all nonsmooth
experiments reported below, we take \(p=2\) and set
\(s_H=0.02\). Since the
constant projection is unchanged and every damped increment has zero average,
\(\mathcal P_{\mathrm{OE}}\) preserves the conservative element average.

\paragraph{Positivity-preserving limiter.}
After OE, we apply the conservative scaling limiter of Zhang and Shu
\cite{zhang2010positivity}. Let \(U_h^\star\) denote the OE-filtered stage
candidate and define its quadrature-weighted average on \(K\) by
\begin{equation}
\label{eq:euler-pp-average}
  \overline U_K
  =\frac{1}{|K|}\sum_{i,j=0}^p M^K_{ij}U_{ij}^\star,
  \qquad
  \overline\rho_K=(\overline U_K)_\rho,
  \qquad
  \overline p_K=p(\overline U_K).
\end{equation}
The implementation uses the following roundoff-aware floors:
\begin{equation}
\label{eq:euler-pp-floors}
  \varepsilon_\rho=64\epsilon_{\mathrm{mach}},
  \qquad
  \varepsilon_{p,K}
  =\max\!\left\{64\epsilon_{\mathrm{mach}},
  8192\epsilon_{\mathrm{mach}}S_{p,K}\right\},
\end{equation}
where \(\epsilon_{\mathrm{mach}}\) is machine precision and \(S_{p,K}\) is
the maximum, over the element average and the density-limited GLL states, of
the pressure-cancellation scale
\[
  S_p(U)=\lvert E_{\mathrm{mech}}\rvert
  +\frac{\lvert\boldsymbol m\rvert^2}
         {2\max\{\rho,\varepsilon_\rho\}}.
\]
Provided that the average state is admissible, density is first limited by
\begin{equation}
\label{eq:euler-pp-density}
\begin{aligned}
  \rho_{\min,K}
  &=\min_{0\le i,j\le p}\rho_{ij}^\star,\\
  \theta_{\rho,K}
  &=\min\!\left\{1,
    \frac{\overline\rho_K-\varepsilon_\rho}
         {\overline\rho_K-\rho_{\min,K}}\right\},\\
  \rho_{ij}^{(\rho)}
  &=\overline\rho_K
    +\theta_{\rho,K}(\rho_{ij}^\star-\overline\rho_K).
\end{aligned}
\end{equation}
Here and in the pressure step below, \(\theta=1\) when the corresponding nodal
minimum already satisfies its floor; otherwise, the displayed ratio is
clipped to \([0,1]\).
The intermediate state
\(U_{ij}^{(\rho)}\) replaces only the density of \(U_{ij}^\star\) by
\(\rho_{ij}^{(\rho)}\); its three momentum components and mechanical energy
are unchanged. The pressure is then recomputed from this intermediate state,
and the full conservative vector is scaled according to
\begin{equation}
\label{eq:euler-pp-pressure}
\begin{aligned}
  p_{\min,K}
  &=\min_{0\le i,j\le p}p\!\left(U_{ij}^{(\rho)}\right),\\
  \theta_{p,K}
  &=\min\!\left\{1,
    \frac{\overline p_K-\varepsilon_{p,K}}
         {\overline p_K-p_{\min,K}}\right\},\\
  (\mathcal P_{\mathrm{PP}}U_h^\star)_{ij}
  &=\overline U_K
    +\theta_{p,K}\left(U_{ij}^{(\rho)}-\overline U_K\right).
\end{aligned}
\end{equation}
The density scaling preserves the density average, and the second scaling
preserves the complete conservative average. For an average state above the
prescribed floors, \(\mathcal P_{\mathrm{PP}}\) therefore enforces the nodal
conditions \(\rho_h\ge\varepsilon_\rho\) and
\(p_h\ge\varepsilon_{p,K}\) without changing the element average. For a
vacuum-like average within the roundoff tolerance, the implementation instead
collapses the corresponding element to its average. A genuinely inadmissible
average cannot be repaired by conservative scaling and therefore causes the
Euler substep to be rejected.

The complete stage-stabilization map is
\begin{equation}
\label{eq:euler-stage-map}
  \boxed{
  \mathcal P
  =\mathcal P_{\mathrm{PP}}\circ\mathcal P_{\mathrm{OE}}:
  \quad
  \text{OE damping}
  \;\longrightarrow\;
  \text{positivity-preserving scaling}}
\end{equation}
and preserves the conservative element average.

\subsubsection{SSP-RK(3,3) update}

Let \(\mathcal L_E(U_h)\) denote the mass-inverted semidiscrete Euler operator
defined by \eqref{eq:euler-semidiscrete}, so that
\[
  \frac{dU_h}{dt}=\mathcal L_E(U_h).
\]
For a time step \(\tau>0\), the implementation combines the stage map
\eqref{eq:euler-stage-map} with the third-order strong-stability-preserving
Runge--Kutta method (SSP-RK(3,3)) \cite{gottlieb2001strong}:
\begin{subequations}
\label{eq:euler-ssprk33}
\begin{align}
  U^{(1)}
  &=\mathcal P\!\left(U^n+\tau\mathcal L_E(U^n)\right),\\
  U^{(2)}
  &=\mathcal P\!\left(\frac34 U^n
    +\frac14\left[U^{(1)}
    +\tau\mathcal L_E(U^{(1)})\right]\right),\\
  U^{n+1}
  &=\mathcal P\!\left(\frac13 U^n
    +\frac23\left[U^{(2)}
    +\tau\mathcal L_E(U^{(2)})\right]\right).
\end{align}
\end{subequations}

\begin{theorem}[Cell-average admissibility of the Euler update]
\label{thm:euler-cell-average-admissibility}
Let
\[
  \mathcal G_E
  =\left\{U:\rho>0,\quad
  E_{\mathrm{mech}}-\frac{|\boldsymbol m|^2}{2\rho}>0\right\}
\]
be the Euler admissible set. Suppose that the GLL nodal states entering each
forward-Euler evaluation belong to \(\mathcal G_E\), and that the LLF
dissipation on every face satisfies
\[
  \lambda_d(U^-,U^+)
  \ge \max_{\sigma\in\{-,+\}}
  \bigl(|u_d^\sigma|+c^\sigma\bigr).
\]
Then there exists \(\tau_0>0\) such that, for every \(0<\tau\le\tau_0\), each raw
forward-Euler update has an admissible element average. In particular, the
standard rectangular-mesh LLF positivity CFL condition
\cite{zhang2010positivity} is sufficient.
\end{theorem}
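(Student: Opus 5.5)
The argument is the standard Zhang--Shu cell-average decomposition \cite{zhang2010positivity}, adapted to the flux-differencing DGSEM. Fix an element \(K\) and a raw forward-Euler step \(U^{n,+}_{ij}=U^K_{ij}-\tau R^K_{ij}(U_h)/M^K_{ij}\). We will show that \(\overline U_K^{+}=\frac{1}{|K|}\sum_{ij}M^K_{ij}U^{n,+}_{ij}\) is a convex combination of states in \(\mathcal G_E\), using the fact that \(\mathcal G_E\) is convex.

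\textbf{Step 1: the volume terms telescope.} Summing \eqref{eq:euler-nodal-index} over \((i,j)\), the volume residual \eqref{eq:euler-volume-residual} drops out. The \(x\)-part is
\[
\sum_j\omega_j\sum_{i,r}2Q_{ir}\mathcal F_x(U_{ij},U_{rj}).
\]
Write \(2Q=(Q-Q^T)+B\) using \eqref{eq:euler-sbp}. The skew part vanishes because \(\mathcal F_x\) is symmetric. The \(B\) part leaves \(\widetilde F_x\) at \(i=p\) minus \(\widetilde F_x\) at \(i=0\), by consistency \(\mathcal F_x(U,U)=\widetilde F_x(U)\). These combine with the \(\Delta\widetilde F\) surface terms, so only numerical fluxes survive:
\[
|K|\,\overline U_K^{+}=|K|\,\overline U_K-\tau\sum_j\omega_j\bigl[\widetilde F^*_{x,p+1/2,j}-\widetilde F^*_{x,-1/2,j}\bigr]-\tau\sum_i\omega_i\bigl[\widetilde F^*_{y,i,p+1/2}-\widetilde F^*_{y,i,-1/2}\bigr].
\]
This is exactly the finite-volume-type form of a standard DG cell average, with LLF fluxes evaluated at the GLL face nodes.

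\textbf{Step 2: split the average into 1D LLF updates.} Set \(\mu_x=\frac{h_{y,K}}{2}\) and \(\mu_y=\frac{h_{x,K}}{2}\) from the metric factors, and use \(|K|=h_xh_y\) together with \(\sum\omega=2\). Write
\[
\overline U_K=\sum_{ij}\frac{\omega_i\omega_j}{4}U_{ij}.
\]
Split this sum in the usual way. Use a fraction \(\alpha_x=\frac{\lambda_x/h_x}{\lambda_x/h_x+\lambda_y/h_y}\) for the \(x\)-direction and \(\alpha_y=1-\alpha_x\) for \(y\). Along each GLL line \(j\), isolate the endpoint weights \(\omega_0=\omega_p\). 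The result is that \(\overline U_K^+\) equals a positive combination of three kinds of terms. The first kind is interior nodal states \(U_{ij}\) with \(0<i<p\), which lie in \(\mathcal G_E\) by hypothesis. The second and third kinds are the combinations
\[
U_{0j}-\frac{\tau}{\alpha_x\omega_0 h_x}\bigl[\widehat F^{LF}_x(U_{0j},U_{pj}^{K,\text{int}})-\widehat F^{LF}_x(U^{-}_{j},U_{0j})\bigr]
\]
together with their analogues at \(i=p\) and in \(y\). Here \(\widehat F^{LF}_x(U_{0j},U_{pj})\) is the usual artificial flux inserted and then cancelled, i.e. one adds and subtracts \(F_x\) evaluated between the two endpoints. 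Each such term is a first-order 1D LLF three-point update with neighbour states from \(\mathcal G_E\).

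\textbf{Step 3: invoke the 1D LLF positivity lemma.} The classical lemma says the following. Suppose \(U,U^\pm\in\mathcal G_E\) and \(\lambda\ge|u_d|+c\) for the states involved. Then
\[
U-\frac{\tau}{\Delta}\bigl[\widehat F^{LF}(U,U^+)-\widehat F^{LF}(U^-,U)\bigr]\in\mathcal G_E
\quad\text{whenever}\quad
\frac{\tau\lambda}{\Delta}\le\frac12 .
\]
The proof writes the update as a convex combination of \(U\) and the states \(U\pm F(U)/\lambda\). Each of these lies in \(\mathcal G_E\) when \(\lambda\ge|u|+c\). The magnetic field is frozen and \(u_z\) only adds to the kinetic energy, so the 2.5D Euler system with three velocity components is handled exactly as in the 3D Euler case. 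Applying the lemma with \(\Delta=\alpha_x\omega_0h_x\) gives
\[
\tau_0=\min_K\Bigl(\frac{\lambda_x}{h_x}+\frac{\lambda_y}{h_y}\Bigr)^{-1}\frac{\omega_0}{2},
\]
which is the rectangular-mesh CFL of \cite{zhang2010positivity}. Convexity of \(\mathcal G_E\) then gives \(\overline U_K^+\in\mathcal G_E\).

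\textbf{Main obstacle.} The hardest part is the bookkeeping in Step 2. The internal artificial LLF flux must be chosen between \(U_{0j}\) and \(U_{pj}\) in the same element, and the endpoint weights must be allocated so that every coefficient is positive and the \(x\) and \(y\) splittings sum correctly. One point must be checked explicitly: the interior artificial flux needs its own \(\lambda\ge|u_d|+c\) bound. We take \(\lambda\) to be the maximum wave speed over all nodal states, which is admissible by hypothesis, rather than the face value alone.
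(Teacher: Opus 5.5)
Your proposal is correct and follows essentially the paper's route: SBP flux differencing collapses the element average to the face-flux balance, and the positive GLL weights then write it as a convex combination of admissible nodal states and one-state Lax--Friedrichs split states $U\pm F_d(U)/\lambda$, which is what your three-point lemma packages. One bookkeeping correction: with $\sum_i\omega_i=2$ the endpoint coefficient is $\Delta=\alpha_x\omega_0h_x/2$, not $\alpha_x\omega_0h_x$, and the sharp three-point condition is $\tau\lambda/\Delta\le 1$, not $\le 1/2$; it also tolerates different dissipation coefficients on the two faces, provided each bounds the wave speed of the state across that face. The two factors of two therefore cancel, and your $\tau_0$ is indeed the rectangular-mesh Zhang--Shu CFL.
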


Indeed, summing the flux-differencing update over the GLL nodes and using the
SBP identity reduces the element-average equation to the usual conservative
face-flux balance. The positive tensor-product GLL weights express this update
as a convex combination of admissible nodal states and Euler
Lax--Friedrichs split states under the stated CFL condition. Convexity of
\(\mathcal G_E\) then gives admissibility of the forward-Euler average and of
the SSP-RK convex combinations. Finally, both OE and positivity scaling
preserve the element average, while the latter restores nodal admissibility.
The LLF requirement in this argument is the one-state Euler
Lax--Friedrichs splitting bound; it is distinct from the exact-Riemann-fan
condition used in Theorem~\ref{thm:euler-dgsem-entropy}.

Before applying \(\mathcal P\), the conservative average of every raw stage
candidate is checked for admissibility. If any element fails this check, the
entire Euler substep is rejected, the solution is restored to \(U^n\), and the
substep is repeated with \(\tau\) reduced by a factor of two. For a strictly
admissible input, Theorem~\ref{thm:euler-cell-average-admissibility} guarantees
a positive timestep threshold, so repeated halving eventually reaches the
admissible regime.

Algorithm~\ref{alg:hydro-step} summarizes the hydrodynamic Euler substep
defined in this section.

\begin{algorithm}[H]
\caption{Hydrodynamic Euler substep \(\Phi_H^\tau\)}
\label{alg:hydro-step}
\begin{algorithmic}[1]
\Require \((\rho_h,\boldsymbol m_h,E_{\mathrm{mech},h},\boldsymbol B_h)^n\)
         and time step \(\tau\).
\State Freeze \(\boldsymbol B_h\) and set
       \(U_h^n=(\rho_h,\boldsymbol m_h,E_{\mathrm{mech},h})^T\in [V_h^p]^5\).
\State Form the DGSEM residual \(\mathcal L_E(U_h)\) from
       \eqref{eq:euler-semidiscrete}, using
       \eqref{eq:euler-chandrashekar-flux} in the volume and
       \eqref{eq:euler-lf-flux} on element interfaces.
\State Advance \(U_h\) with the stabilized SSP-RK scheme
       \eqref{eq:euler-ssprk33}, applying
       \(\mathcal P=\mathcal P_{\mathrm{PP}}\circ
       \mathcal P_{\mathrm{OE}}\) to every raw stage candidate.
\State Set
       \(U_h^{\mathrm{out}}
       =(\rho_h,\boldsymbol m_h,E_{\mathrm{mech},h})^T\in [V_h^p]^5\).
\Ensure \((U_h^{\mathrm{out}},\boldsymbol B_h^n)\), with
        \(\boldsymbol B_h\) unchanged.
\end{algorithmic}
\end{algorithm}

\subsection{Magnetic--velocity subflow: compatible finite elements}
\label{sec:magnetic}

We now discretize the magnetic--velocity subsystem
\eqref{eq:mag-rhoe}--\eqref{eq:mag-constitutive}. Throughout this substep,
the discrete density \(\rho_h\) is held fixed. The ideal relation
\(\rho\partial_t e=0\) becomes \(\rho_h\partial_t e_h=0\); when artificial
resistivity is enabled, it is extended by the Ohmic-heating source
\(\rho_h\partial_t e_h=\eta_h|\Jh|^2\) in the compatible weak form given
below. The separate velocity-OE
correction also returns its kinetic-energy loss to \(e_h\), as described in
Section~\ref{sec:magnetic-stage-stabilization}.

To retain the curl structure of the induction equation, we choose a compatible
degree \(m\in\{p-1,p\}\), take \(k=m\) in the de Rham families of
Section~\ref{sec:spatial-prep}, and introduce the velocity and magnetic spaces
\begin{equation}
\label{eq:magnetic-evolved-spaces}
  X_h^p=(V_h^p)^3,
  \qquad
  \mathcal B_h^m=R_h^m\times V_h^m.
\end{equation}
Thus \(\uh(t)\in X_h^p\), while
\(\Bh(t)=(\boldsymbol B_{\parallel,h},B_{z,h})\in\mathcal B_h^m\), and the
frozen density \(\rho_h\) and specific internal energy \(e_h(t)\) both belong
to \(V_h^p\).

The electric field and current density are reconstructed in the auxiliary
space
\begin{equation}
\label{eq:magnetic-auxiliary-space}
  \mathcal A_h^m=N_h^m\times W_h^{m+1}.
\end{equation}
Accordingly, \(\Eh,\Jh\in\mathcal A_h^m\), with their in-plane components
in the tensor-product N\'ed\'elec space and their out-of-plane components in
the continuous scalar space. The 2.5D curl maps these two product spaces in
the natural direction
\[
  \nabla\times\mathcal A_h^m\subset\mathcal B_h^m.
\]

For \(\boldsymbol v_h,\boldsymbol z_h\in X_h^p\), let
\((\cdot,\cdot)_{X,h}\) denote the componentwise degree-\(p\) DG--GLL
product. Its \(\rho_h\)-weighted form is
\begin{equation}
\label{eq:velocity-inner-product}
  (\rho_h\boldsymbol v_h,\boldsymbol z_h)_{X,h}
  =\sum_{d=1}^3(\rho_hv_{d,h},z_{d,h})_{V,h}.
\end{equation}
The corresponding weighted mass matrix is diagonal. For
\(\boldsymbol F_h,\boldsymbol G_h\in\mathcal A_h^m\), define the
mass-lumped auxiliary product
\begin{equation}
\label{eq:aux-inner-product}
  (\boldsymbol F_h,\boldsymbol G_h)_{\mathcal A,h}
  =
  (\boldsymbol F_{\parallel,h},\boldsymbol G_{\parallel,h})_{N,h}
  +(F_{z,h},G_{z,h})_{W,h}.
\end{equation}
Here \((\cdot,\cdot)_{N,h}\) is the mixed Gauss--GLL/GLL--Gauss rule
\eqref{eq:quad-hcurl-mesh}, and \((\cdot,\cdot)_{W,h}\) is the
degree-\(m+1\) CG--GLL rule \eqref{eq:quad-cg-mesh}. Both blocks are
diagonal on the Cartesian meshes considered here. Hence reconstructing
\(\Jh\) and \(\Eh\) requires only pointwise scaling.

For \(\boldsymbol C_h,\boldsymbol D_h\in\mathcal B_h^m\), we denote the
matched magnetic product by
\begin{equation}
\label{eq:magnetic-inner-product}
  (\boldsymbol C_h,\boldsymbol D_h)_{\mathcal B,h}
  =\sum_{K\in\mathcal T_h}
   Q_{\mathcal B,K}(\boldsymbol C_h\cdot\boldsymbol D_h),
\end{equation}
where \(Q_{\mathcal B,K}\) applies componentwise tensor-Gauss rules of
coordinate-wise exactness \(2m+3\) to the in-plane RT\(_m\) block and
\(2m+1\) to the out-of-plane \(\mathbb Q^m\) block. These rules exactly
integrate the corresponding mass products on affine rectangular elements. We
do not mass-lump the magnetic product. It defines both the weak curl and the
discrete magnetic energy, and its mass matrix is applied but never inverted.

The mass-lumped auxiliary quadrature defines the trilinear form
\begin{equation}
\label{eq:ideal-coupling-form}
  \mathcal C_h(\boldsymbol v_h,\boldsymbol B_h;\boldsymbol G_h)
  =(-\boldsymbol v_h\times\boldsymbol B_h,\boldsymbol G_h)_{\mathcal A,h}.
\end{equation}
The velocity and magnetic fields are evaluated at the auxiliary nodes. Thus,
evaluating \(\mathcal C_h\) is equivalent to interpolating
\(-\boldsymbol v_h\times\boldsymbol B_h\) nodally into
\(\mathcal A_h^m\) and pairing the result with \(\boldsymbol G_h\) in the
mass-lumped auxiliary product. The Ohm-law load uses \(\mathcal C_h\)
directly, whereas the Lorentz load is assembled as its algebraic transpose.
This pairing is the discrete mechanism behind the ideal energy exchange proved
below.

Finally, for a nonnegative elementwise constant resistivity
\(\eta_h\in V_h^0\), define the Ohmic-heating form
\(\mathcal H_{V,h}\) by
\begin{equation}
\label{eq:ohmic-heating-1}
  \mathcal H_{V,h}(\eta_h;\Jh,\Jh,v_h)
  =(\eta_h|\Jh|^2,v_h)_{V,h}
  \qquad\forall v_h\in V_h^p.
\end{equation}
Here all factors on the right-hand side are evaluated with the degree-\(p\)
DG--GLL rule. In particular,
\begin{equation}
\label{eq:ohmic-heating-compatibility}
  \mathcal H_{V,h}(\eta_h;\Jh,\Jh,1)
  =(\eta_h|\Jh|^2,1)_{V,h}.
\end{equation}
This identity pairs the resistive magnetic-energy loss with the internal-energy
gain.

\subsubsection{Semidiscrete differential-algebraic formulation}
The spatial discretization of the magnetic--velocity substep
\eqref{eq:mag-rhoe}--\eqref{eq:mag-constitutive} takes the following
semidiscrete differential-algebraic form. Given a density \(\rho_h\)
with positive nodal values and a nonnegative \(\eta_h\in V_h^0\), both held
fixed throughout the substep, find
\(\uh(t)\in X_h^p\), \(\Bh(t)\in\mathcal B_h^m\),
\(e_h(t)\in V_h^p\), and
\(\Jh(t),\Eh(t)\in\mathcal A_h^m\) such that
\begin{subequations}
\label{eq:magnetic-dae}
\begin{align}
  (\rho_h\partial_t\uh,\boldsymbol v_h)_{X,h}
  -\mathcal C_h(\boldsymbol v_h,\Bh;\Jh)&=0
  &&\forall \boldsymbol v_h\in X_h^p, \label{eq:semi-vel}\\
  (\partial_t\Bh,\boldsymbol C_h)_{\mathcal B,h}
  +(\nabla\times\Eh,\boldsymbol C_h)_{\mathcal B,h}&=0
  &&\forall \boldsymbol C_h\in\mathcal B_h^m, \label{eq:semi-B}\\
  (\rho_h\partial_t e_h,v_h)_{V,h}
  -\mathcal H_{V,h}(\eta_h;\Jh,\Jh,v_h)&=0
  &&\forall v_h\in V_h^p, \label{eq:ohmic-heating}\\
  (\Jh,\boldsymbol\chi_h)_{\mathcal A,h}
  -(\Bh,\nabla\times\boldsymbol\chi_h)_{\mathcal B,h}&=0
  &&\forall \boldsymbol\chi_h\in\mathcal A_h^m,
  \label{eq:mag-current-proj}\\
  (\Eh,\boldsymbol G_h)_{\mathcal A,h}
  -\mathcal C_h(\uh,\Bh;\boldsymbol G_h)
  -(\eta_h\Jh\cdot\boldsymbol G_h,1)_{V,h}&=0
  &&\forall \boldsymbol G_h\in\mathcal A_h^m.
  \label{eq:mag-electric-proj}
\end{align}
\end{subequations}
The first three equations evolve \(\uh\), \(\Bh\), and \(e_h\); the last two
determine \(\Jh\) and \(\Eh\) algebraically from the instantaneous
differential variables. The choice \(\eta_h=0\) recovers the ideal subflow,
for which \(\partial_t e_h=0\). Under periodic boundary conditions,
\eqref{eq:mag-current-proj} is the integration-by-parts
form of \(\Jh=\nabla\times\Bh\), while
\eqref{eq:mag-electric-proj} is the discretization of Ohm's law
\(\Efield=-\boldsymbol u\times\boldsymbol B+\eta_h\J\).

Because the internal-energy mass matrix and the Ohmic source use the same
DG--GLL rule, the third equation in \eqref{eq:magnetic-dae} reduces nodally to
\[
  \rho_h(x_q)\,\partial_t e_h(x_q)
  =\eta_h(x_q)|\Jh(x_q)|^2\ge0.
\]
Thus a positive nodal internal energy remains positive under the resistive
semidiscrete evolution. The same conclusion holds for the SSP-RK update,
whose forward-Euler source increments are nonnegative.

Because the Lorentz load is the algebraic transpose of
\eqref{eq:ideal-coupling-form}, the first equation represents
\(\rho_h\partial_t\uh+\Bh\times\Jh=0\) with frozen density.
The second equation is the compatible finite-element representation of
\(\partial_t\Bh+\nabla\times\Eh=0\). Componentwise,
\[
  -\nabla\times\Eh
  =
  \left(-\partial_yE_{z,h},\,
        \partial_xE_{z,h},\,
        -(\partial_xE_{y,h}-\partial_yE_{x,h})\right)^T .
\]
The in-plane magnetic increment is therefore generated entirely by the scalar
continuous field \(E_{z,h}\), while the \(B_{z,h}\) increment is generated by
the \(H(\mathrm{curl})\) in-plane electric field. Because these curls already
belong to \(R_h^m\) and \(V_h^m\), respectively, the implementation
applies the corresponding discrete derivative operators directly. The same
magnetic product appears in both terms of \eqref{eq:semi-B}; its mass matrix
therefore cancels algebraically and is not inverted.

\paragraph{Energy balance.}
The paired coupling and Ohmic forms yield an exact energy balance for the
semidiscrete scheme \eqref{eq:magnetic-dae}.
\begin{theorem}[Semidiscrete magnetic-step energy balance]
\label{thm:energy}
Under periodic boundary conditions and for fixed nodal density \(\rho_h>0\),
the semidiscrete system \eqref{eq:magnetic-dae} with fixed
\(\eta_h\in V_h^0\), \(\eta_h\ge0\), satisfies
\[
  \frac{d}{dt} E_{tot,h}(t)=0,
  \qquad
  E_{tot,h}(t)
  =
  (\rho_he_h,1)_{V,h}
  +\frac12\|\sqrt{\rho_h}\uh\|_{X,h}^2
  +\frac12\|\Bh\|_{\mathcal B,h}^2 .
\]
Moreover,
\begin{align*}
  \frac{d}{dt}\left[
  \frac12\|\sqrt{\rho_h}\uh\|_{X,h}^2
  +\frac12\|\Bh\|_{\mathcal B,h}^2\right]
  &=-(\eta_h|\Jh|^2,1)_{V,h}\le0,\\
  \frac{d}{dt}(\rho_he_h,1)_{V,h}
  &=(\eta_h|\Jh|^2,1)_{V,h}\ge0.
\end{align*}
\end{theorem}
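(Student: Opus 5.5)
The plan is to test each evolution equation in \eqref{eq:magnetic-dae} with the current state itself and use the two algebraic equations to make the cross terms cancel. Since \(\rho_h\) is frozen and the weighted velocity product is a fixed diagonal bilinear form, \(\frac{d}{dt}\frac12\|\sqrt{\rho_h}\uh\|_{X,h}^2=(\rho_h\partial_t\uh,\uh)_{X,h}\). Similarly, \(\frac{d}{dt}\frac12\|\Bh\|_{\mathcal B,h}^2=(\partial_t\Bh,\Bh)_{\mathcal B,h}\) because the magnetic product is time independent. For the internal energy, \((\rho_he_h,1)_{V,h}\) is linear in \(e_h\), so its derivative is \((\rho_h\partial_te_h,1)_{V,h}\).

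\textbf{Kinetic term.} Take \(\boldsymbol v_h=\uh\) in \eqref{eq:semi-vel}. This gives \((\rho_h\partial_t\uh,\uh)_{X,h}=\mathcal C_h(\uh,\Bh;\Jh)\).

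\textbf{Magnetic term.} Take \(\boldsymbol C_h=\Bh\) in \eqref{eq:semi-B}. This gives \((\partial_t\Bh,\Bh)_{\mathcal B,h}=-(\nabla\times\Eh,\Bh)_{\mathcal B,h}\). Because \(\Eh\in\mathcal A_h^m\), it is an admissible test function in the current equation \eqref{eq:mag-current-proj}. Choosing \(\boldsymbol\chi_h=\Eh\) and using the symmetry of the magnetic product gives
\[
(\nabla\times\Eh,\Bh)_{\mathcal B,h}=(\Jh,\Eh)_{\mathcal A,h}.
\]
Next take \(\boldsymbol G_h=\Jh\) in Ohm's law \eqref{eq:mag-electric-proj}. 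With the symmetry of the lumped auxiliary product, this yields
\[
(\Eh,\Jh)_{\mathcal A,h}=\mathcal C_h(\uh,\Bh;\Jh)+(\eta_h|\Jh|^2,1)_{V,h}.
\]
Adding the kinetic and magnetic rates, the trilinear terms \(\mathcal C_h(\uh,\Bh;\Jh)\) cancel exactly. What remains is \(-(\eta_h|\Jh|^2,1)_{V,h}\), which is nonpositive because \(\eta_h\ge0\) and the GLL weights are positive.

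\textbf{Internal-energy term.} Take \(v_h=1\in V_h^p\) in \eqref{eq:ohmic-heating}. The compatibility identity \eqref{eq:ohmic-heating-compatibility} then gives \(\frac{d}{dt}(\rho_he_h,1)_{V,h}=(\eta_h|\Jh|^2,1)_{V,h}\ge0\). Summing this with the previous balance gives \(\frac{d}{dt}E_{tot,h}=0\).

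\textbf{Main obstacle.} The only delicate point is the bookkeeping that makes the cancellations exact rather than approximate. Three things must hold. First, the term \(\mathcal C_h\) in the velocity equation must be the same trilinear form, with the same arguments, as the one in Ohm's law. The paper guarantees this by assembling the Lorentz load as the algebraic transpose. Second, the weak curl in \eqref{eq:mag-current-proj} and the curl term in \eqref{eq:semi-B} must use the same product \((\cdot,\cdot)_{\mathcal B,h}\). Third, the resistive term in Ohm's law and the Ohmic source must use the same DG--GLL rule. I would also check that periodicity is what justifies \eqref{eq:mag-current-proj} having no boundary term. Beyond that, the fact that \(\Eh\) and \(\Jh\) lie in the same space \(\mathcal A_h^m\) is what allows each to serve as the other's test function.
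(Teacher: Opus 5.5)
Your proposal is correct and follows the paper's proof step by step. You test with \(\boldsymbol v_h=\uh\) and \(\boldsymbol C_h=\Bh\), use \(\boldsymbol\chi_h=\Eh\) in the current equation and \(\boldsymbol G_h=\Jh\) in Ohm's law so that the paired \(\mathcal C_h(\uh,\Bh;\Jh)\) terms cancel, and take \(v_h=1\) in the Ohmic-heating equation with the compatibility identity \eqref{eq:ohmic-heating-compatibility} to balance the internal energy.
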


\begin{proof}
Choose \(\boldsymbol v_h=\uh\) in \eqref{eq:semi-vel}. Since \(\rho_h\) is
fixed,
\[
  \frac{d}{dt}\frac12\|\sqrt{\rho_h}\uh\|_{X,h}^2
  =\mathcal C_h(\uh,\Bh;\Jh).
\]
Choose \(\boldsymbol C_h=\Bh\) in \eqref{eq:semi-B}. Using the current
projection \eqref{eq:mag-current-proj} with
\(\boldsymbol\chi_h=\Eh\),
\[
  \frac{d}{dt}\frac12\|\Bh\|_{\mathcal B,h}^2
  =-(\nabla\times\Eh,\Bh)_{\mathcal B,h}
  =-(\Jh,\Eh)_{\mathcal A,h}.
\]
Taking \(\boldsymbol G_h=\Jh\) in the resistive Ohm law
\eqref{eq:mag-electric-proj} gives
\[
  (\Jh,\Eh)_{\mathcal A,h}
  =\mathcal C_h(\uh,\Bh;\Jh)
   +(\eta_h|\Jh|^2,1)_{V,h}.
\]
The kinetic and magnetic contributions therefore satisfy the first balance.
Taking \(v_h=1\) in \eqref{eq:ohmic-heating} and using
\eqref{eq:ohmic-heating-compatibility} gives the second. Adding the two
identities yields
\[
  \frac{d}{dt}E_{tot,h}(t)=0,
\]
which proves the result.
\end{proof}

\paragraph{Divergence-free property.}
The compatible form of the in-plane magnetic increment gives
\[
  \partial_t\boldsymbol B_{\parallel,h}
  =
  \nabla^\perp E_{z,h}
  =
  (-\partial_yE_{z,h},\,\partial_xE_{z,h})^T\in R_h^m.
\]

\begin{theorem}[Preservation of the in-plane divergence-free constraint]
\label{thm:magnetic-divergence}
If the initial in-plane magnetic field \(\boldsymbol B_{\parallel,h}(0)\)
belongs to the global space \(R_h^m\) and is elementwise divergence-free,
then the semidiscrete magnetic update preserves this global
\(H(\mathrm{div})\) divergence-free property:
\[
  \boldsymbol B_{\parallel,h}(t)\in R_h^m,\qquad
  \nabla\cdot\boldsymbol B_{\parallel,h}(t)=0
  \quad\text{on each }K\in\mathcal T_h .
\]
The same statement holds for the explicit Runge--Kutta time stepping built from
the magnetic residual \eqref{eq:semi-B}.
\end{theorem}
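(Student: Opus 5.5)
The argument rests on showing that the magnetic residual in \eqref{eq:semi-B} is an exact discrete curl lying in the evolution space. From that, every update, continuous or Runge--Kutta, adds only elements of \(\nabla^\perp W_h^{m+1}\subset R_h^m\) to the in-plane field, and such elements are divergence-free by the local complex \eqref{eq:local-de-rham-complexes}.

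\emph{Step 1: the residual is a curl.} Let \(\Eh\in\mathcal A_h^m=N_h^m\times W_h^{m+1}\) be determined from \eqref{eq:mag-current-proj}--\eqref{eq:mag-electric-proj}. Its 2.5D curl has in-plane part \(\nabla^\perp E_{z,h}\) and out-of-plane part \(\partial_xE_{y,h}-\partial_yE_{x,h}\). The assembled inclusions give \(\nabla^\perp W_h^{m+1}\subset R_h^m\) and \(\nabla\times N_h^m\subset V_h^m\). These hold globally because \(E_{z,h}\) is continuous, so the normal trace of \(\nabla^\perp E_{z,h}\) (its tangential derivative along a face) is single-valued. Hence \(\nabla\times\Eh\in\mathcal B_h^m\).

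\emph{Step 2: remove the mass matrix.} Equation \eqref{eq:semi-B} states \((\partial_t\Bh+\nabla\times\Eh,\boldsymbol C_h)_{\mathcal B,h}=0\) for all \(\boldsymbol C_h\in\mathcal B_h^m\). The product \((\cdot,\cdot)_{\mathcal B,h}\) integrates the mass products exactly, so it is positive definite on \(\mathcal B_h^m\). Since \(\partial_t\Bh+\nabla\times\Eh\) itself lies in \(\mathcal B_h^m\), we may test with it and conclude the strong identity \(\partial_t\boldsymbol B_{\parallel,h}=\nabla^\perp E_{z,h}\), which holds exactly in \(R_h^m\). This is the step I would check most carefully: the conclusion needs both membership of the curl in the trial space (Step 1) and definiteness of the exact-quadrature product. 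With a lumped or underintegrated rule, the identity would hold only after projection, and the argument would fail.

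\emph{Step 3: conclude.} Write \(\boldsymbol B_{\parallel,h}(t)=\boldsymbol B_{\parallel,h}(0)+\nabla^\perp\int_0^tE_{z,h}\,\dd s\). The integral lies in \(W_h^{m+1}\) because that is a fixed finite-dimensional space and \(E_{z,h}\) depends continuously on the ODE solution. Membership in \(R_h^m\) follows at once. On each \(K\), \(\nabla\cdot\nabla^\perp w=-\partial_x\partial_yw+\partial_y\partial_xw=0\), so the elementwise divergence stays zero.

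\emph{Step 4: explicit Runge--Kutta.} Every stage has the form \(\boldsymbol B^{(i)}=\sum_j\alpha_{ij}\boldsymbol B^{(j)}+\tau\sum_j\beta_{ij}\nabla^\perp E_{z,h}^{(j)}\) with \(\sum_j\alpha_{ij}=1\). Induction on the stage index then shows each stage equals \(\boldsymbol B_{\parallel,h}^n+\nabla^\perp(\text{element of }W_h^{m+1})\). This holds provided any stage stabilization acts on the velocity and on the magnetic field only through curl-form increments, namely the resistive term inside \(\Eh\), and not through componentwise filtering of \(\Bh\). Under that proviso, divergence preservation follows exactly as in Step 3.
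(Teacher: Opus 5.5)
Your proposal is correct and follows the paper's argument: the in-plane increment is \(\nabla^\perp E_{z,h}\) with \(E_{z,h}\in W_h^{m+1}\) continuous, so it lies in \(R_h^m\) and is elementwise divergence-free; the Runge--Kutta stages are linear combinations of such increments, since \(\mathcal P_M^\tau\) leaves \(\Bh\) unchanged; and your Step~2 makes explicit the mass-matrix cancellation that the paper states in passing. There are two small slips, neither affecting validity: the in-plane part of \(\nabla\times\Eh\) is \(-\nabla^\perp E_{z,h}\) (this sign is what yields your \(\partial_t\boldsymbol B_{\parallel,h}=\nabla^\perp E_{z,h}\)), and Step~2 needs only non-degeneracy of the magnetic product on \(\mathcal B_h^m\), not exact integration, because both terms already lie in that space, so a positive-definite lumped rule would work equally well.
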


Indeed, every in-plane increment is a rotated gradient of a continuous
\(E_{z,h}\). Its elementwise divergence vanishes by commutation of mixed
derivatives, and its normal trace is single valued across each mesh face.
Linear combinations of such increments give the Runge--Kutta statement.

\subsubsection{Stage stabilization: velocity OE and artificial resistivity}
\label{sec:magnetic-stage-stabilization}

As in the Euler substep, we introduce the stage stabilization before the
fully discrete update. Two complementary mechanisms are used. Velocity OE is
an element-local postprocessing map applied to every provisional Runge--Kutta
stage, whereas artificial resistivity enters the magnetic residual and its
paired Ohmic-heating equation. We describe these mechanisms in turn.

\paragraph{Velocity OE procedure.}
For each element \(K\), define the density-weighted collocated inner product
\begin{equation}
\label{eq:rho-weighted-inner-product}
  (\boldsymbol v_h,\boldsymbol z_h)_{\rho,K}
  =(\rho_h\boldsymbol v_h,\boldsymbol z_h)_{X,h,K}
  =\sum_{x_q\in K}M_q^K\rho_q
  \boldsymbol v_q\cdot\boldsymbol z_q .
\end{equation}
Let \(P_{\ell,K}^{\rho}\) be the orthogonal projection in this inner product
onto \([\mathbb Q^\ell(K)]^3\), set \(P_{-1,K}^{\rho}=0\), and introduce the
mutually
orthogonal velocity increments
\[
  \boldsymbol w_{\ell,K}
  =(P_{\ell,K}^{\rho}-P_{\ell-1,K}^{\rho})\uh,
  \qquad 0\le \ell\le p.
\]
Because the nodal density is frozen, these projectors remain fixed throughout
the magnetic substep.

The sensor uses the directional normal-jump construction and cross-line
scaling of the hydrodynamic OE operator. To prevent a small-amplitude velocity
component from dominating the sensor, define the shared amplitude
\[
  \Delta^u_{I,K}
  =\max_{d\in\{x,y,z\}}\Delta_{I,K}(u_{d,h}),
\]
where \(\Delta_{I,K}\) is given by \eqref{eq:euler-oe-amplitude}. With the
jump measure \eqref{eq:euler-oe-jump}, the velocity sensor is
\begin{equation}
\label{eq:magnetic-oe-sensor}
  \sigma^u_{r,I,K}
  =\max_{d\in\{x,y,z\}}
   \frac{(2r+1)h_{I,K}^{r}}{2(r!)}
   \frac{J_{r,I,K}(u_{d,h})}{\Delta^u_{I,K}},
  \qquad 0\le r\le p,
\end{equation}
where a component that is constant to roundoff contributes zero. The
direction-matched magnetic-stage rate is
\begin{equation}
\label{eq:magnetic-oe-rate}
  \lambda_{M,I,K}
  =\frac{2p+1}{h_{I,K}}
  \max_{x_q\in K}\left(
  |u_{I,h}(x_q)|+
  \frac{|\Bh(x_q)|}{\sqrt{\rho_h(x_q)}}
  \right).
\end{equation}
For a magnetic-stage time interval \(\tau\), define
\begin{equation}
\label{eq:magnetic-oe-factor}
\begin{aligned}
  \mathcal D_{\ell,K}
  &=\exp\!\left[-s_M\tau\Theta_{\ell,K}\right],\quad\text{where }
  \Theta_{\ell,K}
  =\sum_{I\in\{x,y\}}\lambda_{M,I,K}
    \sum_{r=0}^{\ell}\sigma^u_{r,I,K}.
\end{aligned}
\end{equation}
where \(s_M\ge0\) is the magnetic-stage OE scale factor. The postprocessing
map is
\begin{equation}
\label{eq:magnetic-oe-map}
\begin{aligned}
  \boldsymbol u_{h,q}^+
  &= (\boldsymbol w_{0,K})_q
     +\sum_{\ell=1}^p\mathcal D_{\ell,K}
       (\boldsymbol w_{\ell,K})_q,\\
  e_q^+
  &=e_q^-+\frac12\sum_{\ell=1}^p
    \left[1-\mathcal D_{\ell,K}^2\right]
    |(\boldsymbol w_{\ell,K})_q|^2 .
\end{aligned}
\end{equation}
This is also the exact solution over the artificial time interval
\(0\le s\le\tau\) of the local damping--heating system
\begin{equation}
\label{eq:magnetic-oe-continuous}
\begin{aligned}
  \rho_h\partial_s\boldsymbol u_h
  &=-\rho_h s_M\sum_{\ell=1}^p\Theta_{\ell,K}
    (P_{\ell,K}^{\rho}-P_{\ell-1,K}^{\rho})\boldsymbol u_h,\\
  \rho_h\partial_s e_h
  &=\rho_h s_M\sum_{\ell=1}^p\Theta_{\ell,K}
    \left|(P_{\ell,K}^{\rho}-P_{\ell-1,K}^{\rho})
    \boldsymbol u_h\right|^2,
\end{aligned}
\end{equation}
with the sensor coefficients and projectors frozen during the map. For all
nonsmooth experiments reported below, we take \(p=2\) and set \(s_M=0.02\).

\begin{proposition}[Magnetic velocity-OE map]
\label{prop:magnetic-oe-energy}
For fixed nodal density \(\rho_q>0\), the map
\eqref{eq:magnetic-oe-map} preserves element momentum, adds a nonnegative
increment to \(e_h\) at every node, and satisfies
\begin{equation}
\label{eq:magnetic-oe-energy-balance}
\begin{aligned}
  \sum_qM_q^K\rho_q\boldsymbol u_{h,q}^+
  &=\sum_qM_q^K\rho_q\boldsymbol u_{h,q}^-,\\
  \frac12\|\sqrt{\rho_h}\uh^-\|_{X,h,K}^2
  -\frac12\|\sqrt{\rho_h}\uh^+\|_{X,h,K}^2
  &=\frac12\sum_{\ell=1}^p
    \left[1-\mathcal D_{\ell,K}^2\right]
    \|\boldsymbol w_{\ell,K}\|_{\rho,K}^2\\
  &=(\rho_h(e_h^+-e_h^-),1)_{V,h,K}\ge0.
\end{aligned}
\end{equation}
Hence one velocity-OE postprocessing map preserves the collocated element
fluid energy and cannot reduce a positive nodal internal energy.
\end{proposition}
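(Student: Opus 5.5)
The plan is to work entirely on one element \(K\) in the density-weighted collocated inner product \((\cdot,\cdot)_{\rho,K}\) of \eqref{eq:rho-weighted-inner-product}. This is a genuine inner product because \(\rho_q>0\) and \(M_q^K>0\). Since \(\rho_h\) is frozen, the nested projectors \(P_{\ell,K}^{\rho}\) are well defined and orthogonal. For \(\ell\ge\ell'\) one has \(P^\rho_{\ell'}P^\rho_\ell=P^\rho_{\ell'}\), so the increments \(\boldsymbol w_{\ell,K}=(P^\rho_\ell-P^\rho_{\ell-1})\uh\) are mutually \(\rho\)-orthogonal. Because \(P^\rho_{p,K}\) is the identity on \([\mathbb Q^p(K)]^3\) and \(P^\rho_{-1,K}=0\), they also sum to \(\uh^-|_K\). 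All three claims then reduce to statements about this orthogonal decomposition.

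\textbf{Momentum.} For \(\ell\ge1\), \(\boldsymbol w_{\ell,K}\) is \(\rho\)-orthogonal to \([\mathbb Q^0(K)]^3\subset[\mathbb Q^{\ell-1}(K)]^3\). Testing with the constant vectors \(\boldsymbol e_d\) gives \(\sum_q M_q^K\rho_q(\boldsymbol w_{\ell,K})_q=0\). The map \eqref{eq:magnetic-oe-map} leaves \(\boldsymbol w_{0,K}\) untouched and only multiplies the \(\ell\ge1\) increments by the scalars \(\mathcal D_{\ell,K}\). Hence \(\sum_qM_q^K\rho_q\boldsymbol u^+_{h,q}=\sum_qM_q^K\rho_q\boldsymbol u^-_{h,q}\), which is the first identity in \eqref{eq:magnetic-oe-energy-balance}.

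\textbf{Kinetic energy.} By Pythagoras in \((\cdot,\cdot)_{\rho,K}\),
\[
\|\uh^-\|_{\rho,K}^2=\sum_{\ell=0}^p\|\boldsymbol w_{\ell,K}\|_{\rho,K}^2,
\qquad
\|\uh^+\|_{\rho,K}^2=\|\boldsymbol w_{0,K}\|_{\rho,K}^2+\sum_{\ell=1}^p\mathcal D_{\ell,K}^2\|\boldsymbol w_{\ell,K}\|_{\rho,K}^2 .
\]
Here \(\|\cdot\|_{\rho,K}=\|\sqrt{\rho_h}\,\cdot\|_{X,h,K}\). Subtracting the two expressions gives the middle expression in \eqref{eq:magnetic-oe-energy-balance}.

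\textbf{Internal energy.} Since \(\mathcal D_{\ell,K}=\exp(-s_M\tau\Theta_{\ell,K})\) with \(s_M,\tau,\Theta_{\ell,K}\ge0\), we have \(0<\mathcal D_{\ell,K}\le1\). The nodal increment \(e^+_q-e^-_q\) in \eqref{eq:magnetic-oe-map} is therefore nonnegative, so a positive nodal \(e_h\) stays positive. Multiply that increment by \(M_q^K\rho_q\) and sum over \(q\). Because the product \((\cdot,\cdot)_{V,h,K}\) is the same GLL collocation, this gives
\[
(\rho_h(e_h^+-e_h^-),1)_{V,h,K}=\tfrac12\sum_{\ell=1}^p\bigl[1-\mathcal D_{\ell,K}^2\bigr]\sum_qM_q^K\rho_q|(\boldsymbol w_{\ell,K})_q|^2,
\]
and the inner sum is exactly \(\|\boldsymbol w_{\ell,K}\|_{\rho,K}^2\). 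This closes the chain of equalities and shows it is \(\ge0\).

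Preservation of element fluid energy, \((\rho_he_h,1)_{V,h,K}+\frac12\|\sqrt{\rho_h}\uh\|^2_{X,h,K}\), then follows by adding the kinetic and internal identities. Since \(\Bh\) is untouched, the magnetic energy is unchanged.

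The only delicate step, which I expect to be the main obstacle, is checking that the norms agree. The heating formula is nodal and unweighted by the projector, while the energy loss is measured in the \(\rho\)-weighted norm. These coincide only because the velocity product \eqref{eq:velocity-inner-product}, the internal-energy product, and \((\cdot,\cdot)_{\rho,K}\) all use the same degree-\(p\) GLL nodes and weights \(M_q^K\). I would state this collocation explicitly rather than appeal to exact integration.
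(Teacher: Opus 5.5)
Your proposal is correct and follows the same route as the paper's proof. In the \(\rho\)-weighted collocated product, the increments with \(\ell\ge1\) are orthogonal to constants, which gives momentum preservation; Pythagoras for the nested increments gives the kinetic-energy difference; and summing the nodal heating with weights \(M_q^K\rho_q\) reproduces the same sum, which is nonnegative because \(0<\mathcal D_{\ell,K}\le1\). Your remark that the balance holds only after summation, not node by node, because the shared GLL nodes and weights make the cross terms cancel by orthogonality, is a useful detail the paper leaves implicit.
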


\begin{proof}
The weighted constant projection \(P_{0,K}^{\rho}\) has the same element
momentum as the input, while every increment with \(\ell\ge1\) is orthogonal
to constants in \eqref{eq:rho-weighted-inner-product}. Orthogonality of the
nested projection increments gives the kinetic-energy difference in
\eqref{eq:magnetic-oe-energy-balance}. Integrating the nodal increment in
\eqref{eq:magnetic-oe-map} gives the same sum. Since
\(0\le\mathcal D_{\ell,K}\le1\), every term is nonnegative.
\end{proof}

\paragraph{Artificial-resistivity coefficient.}
Artificial resistivity is embedded directly in the semidiscrete system
\eqref{eq:magnetic-dae}, rather than applied as a postprocessing map. Its
elementwise coefficient is determined by a degree-zero directional jump
sensor for the magnetic field. For \(I\in\{x,y\}\), let
\(\mathcal T_x=\{y,z\}\) and \(\mathcal T_y=\{x,z\}\) denote the components
tangential to faces normal to direction \(I\), and set
\begin{equation}
\label{eq:magnetic-ar-sensor}
  \Delta_I^B
  =\max_{d\in\mathcal T_I}\Delta_\Omega(B_{d,h}),
  \qquad
  \sigma^B_{I,K}
  =\max_{d\in\mathcal T_I}
   \frac{J_{0,I,K}(B_{d,h})}{2\Delta_I^B}.
\end{equation}
Here \(\Delta_\Omega\) and \(J_{0,I,K}\) are defined in
\eqref{eq:euler-oe-amplitude} and \eqref{eq:euler-oe-jump}, respectively;
a direction with only roundoff-level magnetic variation contributes zero.
Only tangential jumps enter because the normal trace of the in-plane
Raviart--Thomas field is single valued.

The sensor response, directional speed, and artificial-resistivity
coefficient are
\begin{equation}
\label{eq:magnetic-ar-coefficient}
\begin{aligned}
  R_{I,K}&=\min\{C_s\sigma^B_{I,K},1\},\\
  a_{I,K}&=\max_{x_q\in K}\left(
    |u_{I,h}(x_q)|+\frac{|\Bh(x_q)|}{\sqrt{\rho_h(x_q)}}\right),\\
  \eta_K&=C_\eta\max_{I\in\{x,y\}}
    \left\{\frac{h_{I,K}}{2p+1}\,a_{I,K}R_{I,K}\right\}.
\end{aligned}
\end{equation}
Thus \(\eta_h|_K=\eta_K\ge0\), with \(C_\eta\ge0\) setting the maximum
dissipation and \(C_s\ge0\) controlling the sensor response. We use
\(C_s=5\) throughout and \(C_\eta=0.2\) in the nonsmooth tests;
\(C_\eta=0\) disables artificial resistivity. The coefficient is computed
from the state entering a magnetic substep and held fixed throughout that
substep.

By Theorem~\ref{thm:energy}, artificial resistivity converts resolved magnetic
energy into internal energy without changing the semidiscrete total energy.
It also leaves the magnetic update in curl form, so
Theorem~\ref{thm:magnetic-divergence} remains valid. We do not apply OE
directly to \(\Bh\), since a generic element-local filter need not preserve
the compatible divergence-free subspace.

\paragraph{Entropy stability.}
For positive nodal internal energy, define the physical specific entropy and
its DG--GLL integral by
\[
  s_{h,q}=\log p_{h,q}-\gamma\log\rho_{h,q},
  \qquad
  \mathcal S_h=(\rho_hs_h,1)_{V,h},
\]
and let
\(\mathcal E_{\mathrm{ent},h}=-\mathcal S_h/(\gamma-1)\) be the
corresponding convex mathematical entropy.

\begin{theorem}[Entropy stability of the magnetic-stage stabilizations]
\label{thm:magnetic-stabilization-entropy}
Suppose that the nodal density and internal energy satisfy
\(\rho_{h,q}>0\) and \(e_{h,q}>0\). For fixed \(\rho_h\) and
\(\eta_h|_K=\eta_K\ge0\), the semidiscrete Ohmic source satisfies
\begin{equation}
\label{eq:magnetic-ar-entropy}
  \frac{d\mathcal S_h}{dt}
  =\sum_{K\in\mathcal T_h}\sum_{x_q\in K}
    M_q^K\frac{\eta_K|\Jh(x_q)|^2}{e_h(x_q)}\ge0.
\end{equation}
Moreover, one velocity-OE map applied to a positive nodal state satisfies
\begin{equation}
\label{eq:magnetic-oe-entropy}
  \mathcal S_h^+-\mathcal S_h^-
  =\sum_{K\in\mathcal T_h}\sum_{x_q\in K}
    M_q^K\rho_{h,q}\log\!\left(
      \frac{e_{h,q}^+}{e_{h,q}^-}\right)\ge0.
\end{equation}
Consequently, both stabilization mechanisms increase the discrete physical
entropy, or equivalently do not increase \(\mathcal E_{\mathrm{ent},h}\).
\end{theorem}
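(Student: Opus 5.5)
The plan is to reduce both claims to nodal identities, using the fact that every quantity involved is collocated at the degree-\(p\) DG--GLL nodes and that \(\rho_h\) is frozen in the magnetic substep. At each node the ideal-gas law gives \(p_{h,q}=(\gamma-1)\rho_{h,q}e_{h,q}\). Hence
\[
  s_{h,q}=\log(\gamma-1)+\log e_{h,q}+(1-\gamma)\log\rho_{h,q},
\]
and
\[
  \mathcal S_h=\sum_{K}\sum_{x_q\in K}M_q^K\rho_{h,q}s_{h,q}.
\]
Since \(\rho_{h,q}\) is constant in time and unchanged by the velocity-OE map, all terms except \(\sum M_q^K\rho_{h,q}\log e_{h,q}\) drop out of any time derivative or difference of \(\mathcal S_h\). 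Both statements therefore concern only how \(\log e_h\) evolves at each node, weighted by \(M_q^K\rho_{h,q}\).

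For the Ohmic source, I will use the nodal reduction of \eqref{eq:ohmic-heating} derived after \eqref{eq:magnetic-dae}. The internal-energy mass matrix and \(\mathcal H_{V,h}\) share the DG--GLL rule with diagonal mass \(M_q^K\), so testing with the nodal basis function at \(x_q\) gives \(\rho_{h,q}\partial_te_{h,q}=\eta_K|\Jh(x_q)|^2\). The chain rule then yields
\[
  \frac{d}{dt}\bigl(\rho_{h,q}\log e_{h,q}\bigr)
  =\frac{\rho_{h,q}\partial_te_{h,q}}{e_{h,q}}
  =\frac{\eta_K|\Jh(x_q)|^2}{e_{h,q}}.
\]
Summing with weights \(M_q^K\) gives \eqref{eq:magnetic-ar-entropy}. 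This is nonnegative because \(M_q^K>0\) (positive GLL weights and \(|J_K|>0\)), \(\eta_K\ge0\), and \(e_{h,q}>0\). Positivity of \(e_{h,q}\) along the trajectory also follows from the nodal source, since \(e_{h,q}\) is nondecreasing. Here \(|\Jh(x_q)|^2\) means the reconstructed current evaluated at the degree-\(p\) GLL node, as stated after \eqref{eq:ohmic-heating-1}.

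For the velocity-OE map, \(\rho_h\) is again untouched, so \(\mathcal S_h^+-\mathcal S_h^-\) equals \(\sum M_q^K\rho_{h,q}[\log e^+_{h,q}-\log e^-_{h,q}]\), which is \eqref{eq:magnetic-oe-entropy}. Nonnegativity follows from \eqref{eq:magnetic-oe-map}: \(0\le\mathcal D_{\ell,K}\le1\) because \(s_M,\tau,\Theta_{\ell,K}\ge0\), so \(e^+_{h,q}\ge e^-_{h,q}>0\), as already noted in Proposition~\ref{prop:magnetic-oe-energy}. Monotonicity of \(\log\) finishes the argument.

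The equivalence with \(\mathcal E_{\mathrm{ent},h}\) follows from the factor \(-1/(\gamma-1)<0\). The only delicate point is justifying that the Ohmic weak form really reduces nodally, and that \(\mathcal S_h\) uses the same collocated weights so that no quadrature mismatch appears. I will check this by noting that both \((\rho_h\partial_te_h,v_h)_{V,h}\) and \((\eta_h|\Jh|^2,v_h)_{V,h}\) are diagonal in the nodal basis with the same entries \(M_q^K\).
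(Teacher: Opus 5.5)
Your proposal is correct and follows essentially the same route as the paper. Fixed density together with \(p_h=(\gamma-1)\rho_he_h\) reduces the entropy change to the nodal change in \(\rho_{h,q}\log e_{h,q}\); the collocated Ohmic equation and the chain rule then give the rate, and the nodal heating \(e^+_{h,q}\ge e^-_{h,q}>0\) from Proposition~\ref{prop:magnetic-oe-energy} gives the velocity-OE inequality.
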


\begin{proof}
Because the density is fixed during the magnetic substep and
\(p_h=(\gamma-1)\rho_he_h\), the nodal Ohmic heating equation gives
\[
  \rho_{h,q}\frac{d s_{h,q}}{dt}
  =\frac{\rho_{h,q}}{e_{h,q}}\frac{d e_{h,q}}{dt}
  =\frac{\eta_K|\Jh(x_q)|^2}{e_{h,q}}.
\]
Multiplication by the positive GLL weights and summation yield
\eqref{eq:magnetic-ar-entropy}. For the velocity-OE map, fixed density gives
\(s_{h,q}^+-s_{h,q}^-=\log(e_{h,q}^+/e_{h,q}^-)\), while
Proposition~\ref{prop:magnetic-oe-energy} ensures
\(e_{h,q}^+\ge e_{h,q}^->0\). Summation therefore yields
\eqref{eq:magnetic-oe-entropy}.
\end{proof}

\subsubsection{SSP-RK(3,3) update}

Let \(Y_h=(e_h,\uh,\Bh)\), and let
\(\mathcal L_M(Y_h;\rho_h,\eta_h)\) denote the mass-inverted differential
operator obtained after eliminating \(\Jh\) and \(\Eh\) from
\eqref{eq:magnetic-dae}. The density and artificial-resistivity coefficient
are fixed parameters of this operator during one magnetic substep. Define the
stage map
\begin{equation}
\label{eq:magnetic-stage-map}
  \mathcal P_M^\tau(e_h,\uh,\Bh)=(e_h^+,\uh^+,\Bh),
\end{equation}
where \((e_h^+,\uh^+)\) is given elementwise by
\eqref{eq:magnetic-oe-map}; when velocity OE is disabled,
\(\mathcal P_M^\tau\) is the identity. The stabilized SSP-RK(3,3) update is
\begin{subequations}
\label{eq:magnetic-ssprk33}
\begin{align}
  Y^{(1)}
  &=\mathcal P_M^\tau\!\left(
    Y^n+\tau\mathcal L_M(Y^n;\rho_h,\eta_h)\right),\\
  Y^{(2)}
  &=\mathcal P_M^\tau\!\left(
    \frac34Y^n+\frac14\left[Y^{(1)}
    +\tau\mathcal L_M(Y^{(1)};\rho_h,\eta_h)\right]\right),\\
  Y^{n+1}
  &=\mathcal P_M^\tau\!\left(
    \frac13Y^n+\frac23\left[Y^{(2)}
    +\tau\mathcal L_M(Y^{(2)};\rho_h,\eta_h)\right]\right).
\end{align}
\end{subequations}
At each residual evaluation, \(\Jh\) and \(\Eh\) are reconstructed from the
two algebraic equations in \eqref{eq:magnetic-dae}. Starting from positive
\(e_h^n\), every Ohmic forward-Euler increment is nonnegative, and each OE map
adds a nonnegative nodal internal-energy increment. Moreover, each raw SSP-RK
stage is a convex combination of \(e_h^n\) and a preceding stage followed by
such an increment. Induction therefore gives
\(e_{h,q}^{(i)}\ge e_{h,q}^n>0\) at every stage and
\begin{equation}
\label{eq:magnetic-entropy-stability}
  \mathcal E_{\mathrm{ent},h}^{n+1}
  \le\mathcal E_{\mathrm{ent},h}^n.
\end{equation}
Thus the stabilized magnetic--velocity update is entropy stable and preserves
positive nodal internal energy.
Since \(\mathcal P_M^\tau\) leaves \(\Bh\) unchanged, the in-plane
divergence-free property is also preserved stage by stage. We note, however,
that the exact energy identity in Theorem~\ref{thm:energy} is semidiscrete;
the explicit Runge--Kutta update does not in general preserve this quadratic
energy exactly.

Algorithm~\ref{alg:magnetic-step} summarizes the magnetic-velocity substep
defined in this section.

\begin{algorithm}[H]
\caption{Magnetic-velocity substep \(\Phi_M^\tau\)}
\label{alg:magnetic-step}
\begin{algorithmic}[1]
\Require \((\rho_h,e_h,\uh,\Bh)^n\) and time step \(\tau\).
\State Freeze \(\rho_h\). Set \(\eta_h=0\), or, when artificial resistivity
       is enabled, compute \(\eta_h\) from
       \eqref{eq:magnetic-ar-coefficient}. Hold it fixed over the substep.
\State At every SSP-RK residual evaluation, reconstruct \(\Jh\) and \(\Eh\)
       from \eqref{eq:mag-current-proj} and
       \eqref{eq:mag-electric-proj}, respectively, and form
       \(\mathcal L_M\).
\State Advance \(Y_h=(e_h,\uh,\Bh)\) with
       \eqref{eq:magnetic-ssprk33}, applying \(\mathcal P_M^\tau\) to every
       provisional stage.
\Ensure \((\rho_h,e_h,\uh,\Bh)^{\mathrm{out}}\), with \(\rho_h\) unchanged
        and nonnegative Ohmic and velocity-OE heating included in \(e_h\).
\end{algorithmic}
\end{algorithm}

\subsection{Strang splitting and the fully discrete algorithm}
\label{sec:algorithm}

Let \(\Phi_H^\tau\) denote the hydrodynamic Euler map in
Algorithm~\ref{alg:hydro-step}, and let \(\Phi_M^\tau\) denote the
magnetic--velocity map in Algorithm~\ref{alg:magnetic-step}. The fully
discrete solver advances each time step with the symmetric Strang composition
\cite{strang1968construction}
\begin{equation}
\label{eq:strang-composition}
  \mathcal S_{HMH}^{\Delta t}
  =\Phi_H^{\Delta t/2}\circ\Phi_M^{\Delta t}
    \circ\Phi_H^{\Delta t/2}.
\end{equation}
For smooth solutions, this composition is second-order in time.

\subsubsection{Local variable exchange}

The hydrodynamic step evolves the conservative Euler variables
\[
  U_h=(\rho_h,\boldsymbol m_h,E_{\mathrm{mech},h})^T\in [V_h^p]^5,
\]
whereas the magnetic step evolves \((e_h,\uh,\Bh)\) while freezing
\(\rho_h\). The scalar fields \(\rho_h\) and \(e_h\) are both
represented in the DG tensor-product space \(V_h^p\). Therefore the exchange
between the two sets of variables is performed pointwise at the GLL
collocation nodes. On each element \(K\), for \(0\le i,j\le p\),
\[
  \boldsymbol u_{ij}^K
  =
  \frac{\boldsymbol m_{ij}^K}{\rho_{ij}^K},
  \qquad
  e_{ij}^K
  =
  \frac{E_{\mathrm{mech},ij}^K
  -\frac12\rho_{ij}^K|\boldsymbol u_{ij}^K|^2}{\rho_{ij}^K}.
\]
After a magnetic update, the conservative variables are recovered at the same
nodes by
\[
  \boldsymbol m_{ij}^K=\rho_{ij}^K\boldsymbol u_{ij}^K,
  \qquad
  E_{\mathrm{mech},ij}^K
  =
  \rho_{ij}^K e_{ij}^K
  +\frac12\rho_{ij}^K|\boldsymbol u_{ij}^K|^2 .
\]
This nodal collocation exchange requires no global solve and no inter-element
communication.

\subsubsection{One Strang step}

Algorithm~\ref{alg:strang-step} summarizes one fully discrete step.

\begin{algorithm}[H]
\caption{Strang split MHD step}
\label{alg:strang-step}
\begin{algorithmic}[1]
\Require \((\rho_h,\boldsymbol m_h,E_{\mathrm{mech},h},\Bh)^n\) and time step
         \(\Delta t\).
\State Apply the hydrodynamic half-step
       \(\Phi_H^{\Delta t/2}\) using Algorithm~\ref{alg:hydro-step}.
\State Convert the updated conservative variables to
       \((\rho_h,e_h,\uh,\Bh)\) by the nodal formulas above.
\State Apply the magnetic full-step
       \(\Phi_M^{\Delta t}\) using Algorithm~\ref{alg:magnetic-step}.
\State Recover \((\rho_h,\boldsymbol m_h,E_{\mathrm{mech},h})\) at the GLL
       nodes.
\State Apply the second hydrodynamic half-step
       \(\Phi_H^{\Delta t/2}\) using Algorithm~\ref{alg:hydro-step}.
\Ensure \((\rho_h,\boldsymbol m_h,E_{\mathrm{mech},h},\Bh)^{n+1}\).
\end{algorithmic}
\end{algorithm}

The complete method is fully explicit and globally mass conservative. The
hydrodynamic positivity limiter maintains positive density and pressure at the
GLL nodes whenever the cell averages are admissible; otherwise, the step is
retried with a smaller time step. For an initially compatible magnetic field,
the curl update preserves the global \(H(\mathrm{div})\) divergence-free
constraint, while the stabilized magnetic stage satisfies the entropy
inequality \eqref{eq:magnetic-entropy-stability}. The method does not, however,
exactly conserve total energy or momentum at the fully discrete level: SSP-RK
does not generally preserve the quadratic magnetic-stage energy, and the
discrete Lorentz-force update \eqref{eq:semi-vel} is not exactly momentum
conservative. Likewise, the semidiscrete Euler entropy estimate in
Theorem~\ref{thm:euler-dgsem-entropy} does not by itself imply a fully discrete
entropy inequality after the Euler stage stabilization.

\section{Numerical experiments}
\label{sec:numerics}

The numerical implementation is built on the MFEM finite element library
\cite{anderson2021mfem}. All full-system experiments below use the Strang
splitting in Algorithm~\ref{alg:strang-step}, whereas the reduced
Alfv\'en-wave study advances only the magnetic--velocity subproblem with
Algorithm~\ref{alg:magnetic-step}. For 2D2V configurations, in which
\(u_z=B_z=0\) and the in-plane components of \(\boldsymbol E\) and
\(\boldsymbol J\) vanish, we solve the corresponding reduced system without
these identically zero variables.

Throughout this section, \(p\) denotes the DGSEM degree of the Euler variables
and \(m\) the base degree of the compatible N\'ed\'elec--Raviart--Thomas
complex. The convergence studies compare the equal-order choice \(m=p\) with
the reduced magnetic degree \(m=p-1\). All subsequent nonsmooth tests use
\(m=p=2\), activate both OE stages and the positivity limiter, and compare
otherwise identical calculations without and with artificial resistivity
(A.R.). In every \(p=2\) simulation with OE, we set \(s_H=s_M=0.02\). When
A.R. is active, its coefficient is computed from
\eqref{eq:magnetic-ar-coefficient} with \(C_s=5\) and \(C_\eta=0.2\). Any
maximum A.R. coefficient quoted below is the spatial maximum over the full
mesh of the coefficient used in the final magnetic substep.

\subsection{Reduced Alfv\'en-wave magnetic-stage convergence}
\label{sec:magnetic-stage-spatial-convergence}

We first isolate the magnetic--velocity stage from the Euler update and test
the spatial accuracy of the compatible discretization. For constant density,
the reduced system is
\begin{subequations}
\label{eq:mag-stage-continuous}
\begin{align}
 \rho_0\partial_t\boldsymbol u+\boldsymbol B\times\boldsymbol J&=0,
 & \boldsymbol J&=\nabla\times\boldsymbol B, \\
 \partial_t\boldsymbol B+\nabla\times\boldsymbol E&=0,
 & \boldsymbol E&=-\boldsymbol u\times\boldsymbol B.
\end{align}
\end{subequations}
The periodic domain is \(\Omega=[0,2\pi]^2\), and a circularly polarized
Alfv\'en wave travels in the diagonal direction
\[
 \boldsymbol n=\frac{1}{\sqrt{2}}(1,1,0)^T,
 \qquad
 \boldsymbol t=\frac{1}{\sqrt{2}}(-1,1,0)^T.
\]
Let \(k=\sqrt{2}\), \(\omega=kB_0/\sqrt{\rho_0}\), and
\(\theta=x+y-\omega t\). With
\(\rho_0=p_0=B_0=1\), \(\gamma=5/3\), and perturbation amplitude
\(\varepsilon=0.1\), the exact magnetic field is
\begin{equation}
 \boldsymbol B
 =B_0\boldsymbol n
 +\varepsilon
  \left(\boldsymbol t\cos\theta+\boldsymbol e_z\sin\theta\right),
 \label{eq:mag-stage-alfven-B}
\end{equation}
and the exact velocity is
\begin{equation}
 \boldsymbol u
 =-\frac{\varepsilon}{\sqrt{\rho_0}}
 \left(\boldsymbol t\cos\theta+\boldsymbol e_z\sin\theta\right).
 \label{eq:mag-stage-alfven-u}
\end{equation}

For every velocity degree \(p=1,2,3,4\), we compare the two adjacent
compatible magnetic degrees \(m=p-1\) and \(m=p\). To expose the parity
dependence, we organize these choices into the even family
\[
 m_{\rm even}=2\left\lfloor\frac{p}{2}\right\rfloor
 \quad\bigl((p,m)=(1,0),(2,2),(3,2),(4,4)\bigr)
\]
and the odd family
\[
 m_{\rm odd}=2\left\lfloor\frac{p-1}{2}\right\rfloor+1
 \quad\bigl((p,m)=(1,1),(2,1),(3,3),(4,3)\bigr).
\]

Uniform \(N\times N\) meshes with \(N=16,32,64,128\) are evolved to
\(t_f=1\) with the fixed time step \(\Delta t=10^{-4}\), which makes the
temporal error negligible relative to the spatial error. OE, positivity
limiting, and A.R. are disabled. For
\(q\in\{\boldsymbol u,\boldsymbol B,\boldsymbol E,\boldsymbol J\}\), we define
\[
 e_q=\lVert q_h-q\rVert_{L^2(\Omega)}.
\]
The reported vector errors combine the in-plane and out-of-plane components;
for example,
\[
 e_B=\left(e_{B_{xy}}^2+e_{B_z}^2\right)^{1/2},
\]
and analogously for \(e_E\) and \(e_J\). For each successive refinement, we
report the rate
\[
 r_q(2N)=\log_2\!\left(\frac{e_q(N)}{e_q(2N)}\right).
\]

\begin{table}[t]
\centering
\caption{Spatial convergence of the reduced Alfv\'en-wave magnetic stage,
grouped by the parity of the magnetic degree $m$. The even and odd families
use $m=2\lfloor p/2\rfloor$ and
$m=2\lfloor(p-1)/2\rfloor+1$, respectively. Each entry gives the $L^2$ error
followed by the successive-refinement rate in parentheses.}
\label{tab:mag-stage-spatial-parity}
\scriptsize
\setlength{\tabcolsep}{3.0pt}
\begin{tabular}{@{}ccrcccc@{}}
\toprule
$p$ & $m$ & $N$ & $e_u$ & $e_B$ & $e_E$ & $e_J$ \\
\midrule
\multicolumn{7}{@{}l}{\textit{Even magnetic degree}} \\
1 & 0 & 16 & $5.367\mathrm{e}{-2}\;(--)$ & $8.803\mathrm{e}{-2}\;(--)$ & $5.782\mathrm{e}{-2}\;(--)$ & $8.042\mathrm{e}{-2}\;(--)$ \\
1 & 0 & 32 & $2.467\mathrm{e}{-2}\;(1.12)$ & $4.372\mathrm{e}{-2}\;(1.01)$ & $2.620\mathrm{e}{-2}\;(1.14)$ & $3.686\mathrm{e}{-2}\;(1.13)$ \\
1 & 0 & 64 & $1.204\mathrm{e}{-2}\;(1.04)$ & $2.182\mathrm{e}{-2}\;(1.00)$ & $1.272\mathrm{e}{-2}\;(1.04)$ & $1.797\mathrm{e}{-2}\;(1.04)$ \\
1 & 0 & 128 & $5.981\mathrm{e}{-3}\;(1.01)$ & $1.091\mathrm{e}{-2}\;(1.00)$ & $6.312\mathrm{e}{-3}\;(1.01)$ & $8.923\mathrm{e}{-3}\;(1.01)$ \\
\addlinespace[1.5pt]
2 & 2 & 16 & $6.337\mathrm{e}{-4}\;(--)$ & $9.094\mathrm{e}{-4}\;(--)$ & $4.242\mathrm{e}{-4}\;(--)$ & $1.654\mathrm{e}{-2}\;(--)$ \\
2 & 2 & 32 & $6.800\mathrm{e}{-5}\;(3.22)$ & $1.334\mathrm{e}{-4}\;(2.77)$ & $5.287\mathrm{e}{-5}\;(3.00)$ & $5.030\mathrm{e}{-3}\;(1.72)$ \\
2 & 2 & 64 & $6.690\mathrm{e}{-6}\;(3.35)$ & $1.362\mathrm{e}{-5}\;(3.29)$ & $5.464\mathrm{e}{-6}\;(3.27)$ & $9.559\mathrm{e}{-4}\;(2.40)$ \\
2 & 2 & 128 & $1.215\mathrm{e}{-6}\;(2.46)$ & $2.006\mathrm{e}{-6}\;(2.76)$ & $8.721\mathrm{e}{-7}\;(2.65)$ & $3.057\mathrm{e}{-4}\;(1.64)$ \\
\addlinespace[1.5pt]
3 & 2 & 16 & $8.536\mathrm{e}{-5}\;(--)$ & $1.486\mathrm{e}{-4}\;(--)$ & $9.856\mathrm{e}{-5}\;(--)$ & $2.819\mathrm{e}{-4}\;(--)$ \\
3 & 2 & 32 & $1.070\mathrm{e}{-5}\;(3.00)$ & $1.847\mathrm{e}{-5}\;(3.01)$ & $1.231\mathrm{e}{-5}\;(3.00)$ & $4.922\mathrm{e}{-5}\;(2.52)$ \\
3 & 2 & 64 & $1.340\mathrm{e}{-6}\;(3.00)$ & $2.300\mathrm{e}{-6}\;(3.01)$ & $1.537\mathrm{e}{-6}\;(3.00)$ & $8.551\mathrm{e}{-6}\;(2.53)$ \\
3 & 2 & 128 & $1.659\mathrm{e}{-7}\;(3.01)$ & $2.872\mathrm{e}{-7}\;(3.00)$ & $1.908\mathrm{e}{-7}\;(3.01)$ & $1.691\mathrm{e}{-6}\;(2.34)$ \\
\addlinespace[1.5pt]
4 & 4 & 16 & $3.417\mathrm{e}{-7}\;(--)$ & $4.384\mathrm{e}{-7}\;(--)$ & $2.631\mathrm{e}{-7}\;(--)$ & $1.482\mathrm{e}{-5}\;(--)$ \\
4 & 4 & 32 & $9.945\mathrm{e}{-9}\;(5.10)$ & $1.123\mathrm{e}{-8}\;(5.29)$ & $7.422\mathrm{e}{-9}\;(5.15)$ & $7.646\mathrm{e}{-7}\;(4.28)$ \\
4 & 4 & 64 & $2.977\mathrm{e}{-10}\;(5.06)$ & $3.614\mathrm{e}{-10}\;(4.96)$ & $2.115\mathrm{e}{-10}\;(5.13)$ & $4.449\mathrm{e}{-8}\;(4.10)$ \\
4 & 4 & 128 & $9.345\mathrm{e}{-12}\;(4.99)$ & $1.018\mathrm{e}{-11}\;(5.15)$ & $6.704\mathrm{e}{-12}\;(4.98)$ & $2.682\mathrm{e}{-9}\;(4.05)$ \\
\midrule
\multicolumn{7}{@{}l}{\textit{Odd magnetic degree}} \\
1 & 1 & 16 & $7.512\mathrm{e}{-2}\;(--)$ & $7.792\mathrm{e}{-2}\;(--)$ & $4.259\mathrm{e}{-2}\;(--)$ & $6.176\mathrm{e}{-1}\;(--)$ \\
1 & 1 & 32 & $3.770\mathrm{e}{-2}\;(0.99)$ & $3.914\mathrm{e}{-2}\;(0.99)$ & $2.109\mathrm{e}{-2}\;(1.01)$ & $6.184\mathrm{e}{-1}\;(-0.00)$ \\
1 & 1 & 64 & $1.887\mathrm{e}{-2}\;(1.00)$ & $1.959\mathrm{e}{-2}\;(1.00)$ & $1.052\mathrm{e}{-2}\;(1.00)$ & $6.186\mathrm{e}{-1}\;(-0.00)$ \\
1 & 1 & 128 & $9.436\mathrm{e}{-3}\;(1.00)$ & $9.799\mathrm{e}{-3}\;(1.00)$ & $5.258\mathrm{e}{-3}\;(1.00)$ & $6.186\mathrm{e}{-1}\;(-0.00)$ \\
\addlinespace[1.5pt]
2 & 1 & 16 & $5.901\mathrm{e}{-4}\;(--)$ & $4.549\mathrm{e}{-3}\;(--)$ & $2.583\mathrm{e}{-3}\;(--)$ & $8.284\mathrm{e}{-3}\;(--)$ \\
2 & 1 & 32 & $1.224\mathrm{e}{-4}\;(2.27)$ & $1.116\mathrm{e}{-3}\;(2.03)$ & $6.420\mathrm{e}{-4}\;(2.01)$ & $2.169\mathrm{e}{-3}\;(1.93)$ \\
2 & 1 & 64 & $2.663\mathrm{e}{-5}\;(2.20)$ & $2.769\mathrm{e}{-4}\;(2.01)$ & $1.601\mathrm{e}{-4}\;(2.00)$ & $5.454\mathrm{e}{-4}\;(1.99)$ \\
2 & 1 & 128 & $6.140\mathrm{e}{-6}\;(2.12)$ & $6.923\mathrm{e}{-5}\;(2.00)$ & $3.999\mathrm{e}{-5}\;(2.00)$ & $2.640\mathrm{e}{-4}\;(1.05)$ \\
\addlinespace[1.5pt]
3 & 3 & 16 & $7.826\mathrm{e}{-5}\;(--)$ & $1.028\mathrm{e}{-4}\;(--)$ & $4.782\mathrm{e}{-5}\;(--)$ & $1.856\mathrm{e}{-3}\;(--)$ \\
3 & 3 & 32 & $9.723\mathrm{e}{-6}\;(3.01)$ & $1.287\mathrm{e}{-5}\;(3.00)$ & $5.848\mathrm{e}{-6}\;(3.03)$ & $4.636\mathrm{e}{-4}\;(2.00)$ \\
3 & 3 & 64 & $1.213\mathrm{e}{-6}\;(3.00)$ & $1.610\mathrm{e}{-6}\;(3.00)$ & $7.322\mathrm{e}{-7}\;(3.00)$ & $1.158\mathrm{e}{-4}\;(2.00)$ \\
3 & 3 & 128 & $1.514\mathrm{e}{-7}\;(3.00)$ & $2.013\mathrm{e}{-7}\;(3.00)$ & $9.156\mathrm{e}{-8}\;(3.00)$ & $2.893\mathrm{e}{-5}\;(2.00)$ \\
\addlinespace[1.5pt]
4 & 3 & 16 & $1.271\mathrm{e}{-6}\;(--)$ & $3.662\mathrm{e}{-6}\;(--)$ & $2.135\mathrm{e}{-6}\;(--)$ & $9.514\mathrm{e}{-6}\;(--)$ \\
4 & 3 & 32 & $7.306\mathrm{e}{-8}\;(4.12)$ & $2.276\mathrm{e}{-7}\;(4.01)$ & $1.319\mathrm{e}{-7}\;(4.02)$ & $7.156\mathrm{e}{-7}\;(3.73)$ \\
4 & 3 & 64 & $4.467\mathrm{e}{-9}\;(4.03)$ & $1.420\mathrm{e}{-8}\;(4.00)$ & $8.221\mathrm{e}{-9}\;(4.00)$ & $7.365\mathrm{e}{-8}\;(3.28)$ \\
4 & 3 & 128 & $2.769\mathrm{e}{-10}\;(4.01)$ & $8.880\mathrm{e}{-10}\;(4.00)$ & $5.134\mathrm{e}{-10}\;(4.00)$ & $1.022\mathrm{e}{-8}\;(2.85)$ \\
\bottomrule
\end{tabular}
\end{table}

Table~\ref{tab:mag-stage-spatial-parity} shows a clear parity dependence.
For the even-\(m\) family, the magnetic and electric fields converge at
approximately order \(m+1\), while the current generally converges at order
\(m\). The low-order pair \((p,m)=(1,0)\) is a favorable exception: its
current remains first-order accurate. The velocity converges at approximately
order \(p+1\) for even \(p\) and order \(p\) for odd \(p\). For the odd-\(m\)
family, the velocity, magnetic field, and electric field converge at
approximately order \(p\), whereas the current converges at order \(p-1\).
The one-order loss for \(J_h\) relative to \(B_h\) is consistent with
the differentiation in \(\boldsymbol J_h=\nabla\times\boldsymbol B_h\).

The most striking contrast occurs at \(p=1\). For \((p,m)=(1,0)\), the
current converges at first order and reaches \(e_J=8.92\times10^{-3}\) on the
finest mesh. For \((p,m)=(1,1)\), however, the current error stagnates at
\(e_J\approx6.19\times10^{-1}\), nearly 70 times larger. The primary
variables remain first-order accurate for both choices, which localizes the
failure to the current reconstruction. At higher degree, \((2,2)\) and
\((4,4)\) recover approximately third- and fifth-order primary fields,
respectively, together with the expected lower orders for the current. Both
\(p=3\) choices exhibit robust third-order
primary-field convergence, although \((3,3)\) gives smaller absolute primary
errors. Overall, the even magnetic degrees provide the more reliable
high-order behavior.

\subsection{Advected MHD vortex}
\label{sec:balsara-vortex-convergence}

We next test the fully coupled system using the isodensity MHD vortex of
Balsara~\cite{balsara2004second}, for which both split subflows are active.
We take \(\gamma=5/3\). Computations are performed on the periodic box
\(\Omega_c=[-10,10]^2\), and errors are evaluated over the central region
\(\Omega_e=[-5,5]^2\). Let
\(\overline{\boldsymbol u}=(1,1,0)^T\),
\(\boldsymbol\xi=(x-t,y-t)^T\), \(r^2=\lvert\boldsymbol\xi\rvert^2\), and
\(g=\exp((1-r^2)/2)\), with the displacement interpreted periodically on
\(\Omega_c\). In the normalized magnetic units used here, the exact solution
is
\begin{align*}
 \rho &= 1, &
 \boldsymbol u &= \overline{\boldsymbol u}
   +\frac{g}{2\pi}(-\xi_y,\xi_x,0)^T,\\
 p &= 1-\frac{r^2g^2}{8\pi^2}, &
 \boldsymbol B &= \frac{g}{2\pi}(-\xi_y,\xi_x,0)^T.
\end{align*}
Thus, the density is constant, whereas the nonuniform pressure, velocity, and
magnetic profiles advect with the background velocity without changing shape.
We initialize the in-plane magnetic field as the discrete curl of the
corresponding scalar potential.

\subsubsection{Spatial convergence}
\label{sec:balsara-vortex-spatial-convergence}

For the spatial study, we use the same even- and odd-\(m\) families as in the
reduced Alfv\'en test. Uniform meshes with \(N=16,32,64,128\) are evolved to
\(t_f=1\) with the fixed time step \(\Delta t=10^{-4}\); by this time, the
background flow has translated the vortex by \((1,1)\). As in the reduced
test, the time step suppresses temporal-error contamination. We report
the \(L^2\) errors in density, velocity, pressure, magnetic field,
out-of-plane electric field, and out-of-plane current. OE, positivity limiting, and
artificial resistivity are disabled.

\begin{table}[t]
\centering
\caption{Spatial convergence of the advected MHD vortex, grouped by the
parity of the magnetic degree $m$. The even and odd families use
$m=2\lfloor p/2\rfloor$ and $m=2\lfloor(p-1)/2\rfloor+1$, respectively. Each
entry gives the $L^2(\Omega_e)$ error followed by the successive-refinement
rate in parentheses.}
\label{tab:balsara-vortex-spatial-parity}
\scriptsize
\setlength{\tabcolsep}{1.6pt}
\resizebox{\linewidth}{!}{%
\begin{tabular}{@{}ccrcccccc@{}}
\toprule
$p$ & $m$ & $N$ & $e_\rho$ & $e_u$ & $e_p$ & $e_B$ & $e_{E_z}$ & $e_{J_z}$ \\
\midrule
\multicolumn{9}{@{}l}{\textit{Even magnetic degree}} \\
1 & 0 & 16 & $2.780\mathrm{e}{-2}\;(--)$ & $2.178\mathrm{e}{-1}\;(--)$ & $5.042\mathrm{e}{-2}\;(--)$ & $3.163\mathrm{e}{-1}\;(--)$ & $2.919\mathrm{e}{-1}\;(--)$ & $5.110\mathrm{e}{-1}\;(--)$ \\
1 & 0 & 32 & $1.140\mathrm{e}{-2}\;(1.29)$ & $9.312\mathrm{e}{-2}\;(1.23)$ & $2.116\mathrm{e}{-2}\;(1.25)$ & $1.410\mathrm{e}{-1}\;(1.17)$ & $1.249\mathrm{e}{-1}\;(1.22)$ & $2.372\mathrm{e}{-1}\;(1.11)$ \\
1 & 0 & 64 & $3.320\mathrm{e}{-3}\;(1.78)$ & $2.923\mathrm{e}{-2}\;(1.67)$ & $6.060\mathrm{e}{-3}\;(1.80)$ & $5.844\mathrm{e}{-2}\;(1.27)$ & $3.720\mathrm{e}{-2}\;(1.75)$ & $7.288\mathrm{e}{-2}\;(1.70)$ \\
1 & 0 & 128 & $8.388\mathrm{e}{-4}\;(1.98)$ & $7.840\mathrm{e}{-3}\;(1.90)$ & $1.475\mathrm{e}{-3}\;(2.04)$ & $2.669\mathrm{e}{-2}\;(1.13)$ & $9.745\mathrm{e}{-3}\;(1.93)$ & $1.922\mathrm{e}{-2}\;(1.92)$ \\
\addlinespace[1.5pt]
2 & 2 & 16 & $8.185\mathrm{e}{-3}\;(--)$ & $3.666\mathrm{e}{-2}\;(--)$ & $1.250\mathrm{e}{-2}\;(--)$ & $8.822\mathrm{e}{-2}\;(--)$ & $1.610\mathrm{e}{-2}\;(--)$ & $4.272\mathrm{e}{-1}\;(--)$ \\
2 & 2 & 32 & $1.436\mathrm{e}{-3}\;(2.51)$ & $4.671\mathrm{e}{-3}\;(2.97)$ & $2.385\mathrm{e}{-3}\;(2.39)$ & $7.290\mathrm{e}{-3}\;(3.60)$ & $2.065\mathrm{e}{-3}\;(2.96)$ & $7.441\mathrm{e}{-2}\;(2.52)$ \\
2 & 2 & 64 & $2.638\mathrm{e}{-4}\;(2.44)$ & $8.639\mathrm{e}{-4}\;(2.43)$ & $4.358\mathrm{e}{-4}\;(2.45)$ & $4.222\mathrm{e}{-4}\;(4.11)$ & $1.708\mathrm{e}{-4}\;(3.60)$ & $8.768\mathrm{e}{-3}\;(3.09)$ \\
2 & 2 & 128 & $4.776\mathrm{e}{-5}\;(2.47)$ & $1.382\mathrm{e}{-4}\;(2.64)$ & $7.881\mathrm{e}{-5}\;(2.47)$ & $5.025\mathrm{e}{-5}\;(3.07)$ & $1.835\mathrm{e}{-5}\;(3.22)$ & $2.150\mathrm{e}{-3}\;(2.03)$ \\
\addlinespace[1.5pt]
3 & 2 & 16 & $7.804\mathrm{e}{-3}\;(--)$ & $1.449\mathrm{e}{-2}\;(--)$ & $1.305\mathrm{e}{-2}\;(--)$ & $8.328\mathrm{e}{-2}\;(--)$ & $1.510\mathrm{e}{-2}\;(--)$ & $4.032\mathrm{e}{-1}\;(--)$ \\
3 & 2 & 32 & $7.359\mathrm{e}{-4}\;(3.41)$ & $1.367\mathrm{e}{-3}\;(3.41)$ & $1.246\mathrm{e}{-3}\;(3.39)$ & $6.354\mathrm{e}{-3}\;(3.71)$ & $1.704\mathrm{e}{-3}\;(3.15)$ & $6.418\mathrm{e}{-2}\;(2.65)$ \\
3 & 2 & 64 & $6.872\mathrm{e}{-5}\;(3.42)$ & $1.165\mathrm{e}{-4}\;(3.55)$ & $1.150\mathrm{e}{-4}\;(3.44)$ & $3.895\mathrm{e}{-4}\;(4.03)$ & $1.242\mathrm{e}{-4}\;(3.78)$ & $8.022\mathrm{e}{-3}\;(3.00)$ \\
3 & 2 & 128 & $8.307\mathrm{e}{-6}\;(3.05)$ & $1.157\mathrm{e}{-5}\;(3.33)$ & $1.374\mathrm{e}{-5}\;(3.07)$ & $3.893\mathrm{e}{-5}\;(3.32)$ & $1.074\mathrm{e}{-5}\;(3.53)$ & $1.599\mathrm{e}{-3}\;(2.33)$ \\
\addlinespace[1.5pt]
4 & 4 & 16 & $4.438\mathrm{e}{-4}\;(--)$ & $9.424\mathrm{e}{-4}\;(--)$ & $6.053\mathrm{e}{-4}\;(--)$ & $3.994\mathrm{e}{-3}\;(--)$ & $2.880\mathrm{e}{-4}\;(--)$ & $4.626\mathrm{e}{-2}\;(--)$ \\
4 & 4 & 32 & $1.885\mathrm{e}{-5}\;(4.56)$ & $3.567\mathrm{e}{-5}\;(4.72)$ & $3.099\mathrm{e}{-5}\;(4.29)$ & $5.122\mathrm{e}{-5}\;(6.29)$ & $1.226\mathrm{e}{-5}\;(4.55)$ & $1.213\mathrm{e}{-3}\;(5.25)$ \\
4 & 4 & 64 & $7.912\mathrm{e}{-7}\;(4.57)$ & $1.464\mathrm{e}{-6}\;(4.61)$ & $1.275\mathrm{e}{-6}\;(4.60)$ & $7.716\mathrm{e}{-7}\;(6.05)$ & $3.020\mathrm{e}{-7}\;(5.34)$ & $4.043\mathrm{e}{-5}\;(4.91)$ \\
4 & 4 & 128 & $3.400\mathrm{e}{-8}\;(4.54)$ & $6.024\mathrm{e}{-8}\;(4.60)$ & $5.366\mathrm{e}{-8}\;(4.57)$ & $2.265\mathrm{e}{-8}\;(5.09)$ & $7.376\mathrm{e}{-9}\;(5.36)$ & $2.437\mathrm{e}{-6}\;(4.05)$ \\
\midrule
\multicolumn{9}{@{}l}{\textit{Odd magnetic degree}} \\
1 & 1 & 16 & $2.784\mathrm{e}{-2}\;(--)$ & $2.177\mathrm{e}{-1}\;(--)$ & $4.967\mathrm{e}{-2}\;(--)$ & $1.115\mathrm{e}{-1}\;(--)$ & $4.297\mathrm{e}{-2}\;(--)$ & $3.110\mathrm{e}{-1}\;(--)$ \\
1 & 1 & 32 & $1.232\mathrm{e}{-2}\;(1.18)$ & $9.285\mathrm{e}{-2}\;(1.23)$ & $2.043\mathrm{e}{-2}\;(1.28)$ & $5.372\mathrm{e}{-2}\;(1.05)$ & $1.214\mathrm{e}{-2}\;(1.82)$ & $2.786\mathrm{e}{-1}\;(0.16)$ \\
1 & 1 & 64 & $4.064\mathrm{e}{-3}\;(1.60)$ & $2.904\mathrm{e}{-2}\;(1.68)$ & $6.151\mathrm{e}{-3}\;(1.73)$ & $2.625\mathrm{e}{-2}\;(1.03)$ & $3.544\mathrm{e}{-3}\;(1.78)$ & $2.632\mathrm{e}{-1}\;(0.08)$ \\
1 & 1 & 128 & $1.113\mathrm{e}{-3}\;(1.87)$ & $7.781\mathrm{e}{-3}\;(1.90)$ & $1.599\mathrm{e}{-3}\;(1.94)$ & $1.306\mathrm{e}{-2}\;(1.01)$ & $9.518\mathrm{e}{-4}\;(1.90)$ & $2.597\mathrm{e}{-1}\;(0.02)$ \\
\addlinespace[1.5pt]
2 & 1 & 16 & $1.065\mathrm{e}{-2}\;(--)$ & $3.701\mathrm{e}{-2}\;(--)$ & $1.683\mathrm{e}{-2}\;(--)$ & $1.099\mathrm{e}{-1}\;(--)$ & $4.285\mathrm{e}{-2}\;(--)$ & $3.061\mathrm{e}{-1}\;(--)$ \\
2 & 1 & 32 & $6.443\mathrm{e}{-3}\;(0.73)$ & $1.054\mathrm{e}{-2}\;(1.81)$ & $1.069\mathrm{e}{-2}\;(0.65)$ & $5.068\mathrm{e}{-2}\;(1.12)$ & $1.011\mathrm{e}{-2}\;(2.08)$ & $2.640\mathrm{e}{-1}\;(0.21)$ \\
2 & 1 & 64 & $4.687\mathrm{e}{-3}\;(0.46)$ & $6.700\mathrm{e}{-3}\;(0.65)$ & $7.788\mathrm{e}{-3}\;(0.46)$ & $2.158\mathrm{e}{-2}\;(1.23)$ & $2.441\mathrm{e}{-3}\;(2.05)$ & $2.172\mathrm{e}{-1}\;(0.28)$ \\
2 & 1 & 128 & $2.895\mathrm{e}{-3}\;(0.70)$ & $3.968\mathrm{e}{-3}\;(0.76)$ & $4.800\mathrm{e}{-3}\;(0.70)$ & $7.914\mathrm{e}{-3}\;(1.45)$ & $5.250\mathrm{e}{-4}\;(2.22)$ & $1.577\mathrm{e}{-1}\;(0.46)$ \\
\addlinespace[1.5pt]
3 & 3 & 16 & $1.301\mathrm{e}{-3}\;(--)$ & $6.320\mathrm{e}{-3}\;(--)$ & $2.229\mathrm{e}{-3}\;(--)$ & $7.851\mathrm{e}{-3}\;(--)$ & $2.704\mathrm{e}{-3}\;(--)$ & $7.654\mathrm{e}{-2}\;(--)$ \\
3 & 3 & 32 & $9.096\mathrm{e}{-5}\;(3.84)$ & $4.825\mathrm{e}{-4}\;(3.71)$ & $1.588\mathrm{e}{-4}\;(3.81)$ & $4.390\mathrm{e}{-4}\;(4.16)$ & $1.246\mathrm{e}{-4}\;(4.44)$ & $7.230\mathrm{e}{-3}\;(3.40)$ \\
3 & 3 & 64 & $1.162\mathrm{e}{-5}\;(2.97)$ & $2.356\mathrm{e}{-5}\;(4.36)$ & $1.881\mathrm{e}{-5}\;(3.08)$ & $5.447\mathrm{e}{-5}\;(3.01)$ & $6.987\mathrm{e}{-6}\;(4.16)$ & $1.692\mathrm{e}{-3}\;(2.10)$ \\
3 & 3 & 128 & $1.516\mathrm{e}{-6}\;(2.94)$ & $2.220\mathrm{e}{-6}\;(3.41)$ & $2.523\mathrm{e}{-6}\;(2.90)$ & $6.705\mathrm{e}{-6}\;(3.02)$ & $3.684\mathrm{e}{-7}\;(4.25)$ & $4.075\mathrm{e}{-4}\;(2.05)$ \\
\addlinespace[1.5pt]
4 & 3 & 16 & $3.706\mathrm{e}{-4}\;(--)$ & $1.677\mathrm{e}{-3}\;(--)$ & $6.480\mathrm{e}{-4}\;(--)$ & $6.880\mathrm{e}{-3}\;(--)$ & $2.277\mathrm{e}{-3}\;(--)$ & $6.659\mathrm{e}{-2}\;(--)$ \\
4 & 3 & 32 & $1.087\mathrm{e}{-4}\;(1.77)$ & $1.550\mathrm{e}{-4}\;(3.44)$ & $1.798\mathrm{e}{-4}\;(1.85)$ & $3.916\mathrm{e}{-4}\;(4.13)$ & $8.726\mathrm{e}{-5}\;(4.71)$ & $6.452\mathrm{e}{-3}\;(3.37)$ \\
4 & 3 & 64 & $2.053\mathrm{e}{-5}\;(2.41)$ & $2.781\mathrm{e}{-5}\;(2.48)$ & $3.395\mathrm{e}{-5}\;(2.40)$ & $4.693\mathrm{e}{-5}\;(3.06)$ & $4.327\mathrm{e}{-6}\;(4.33)$ & $1.451\mathrm{e}{-3}\;(2.15)$ \\
4 & 3 & 128 & $2.614\mathrm{e}{-6}\;(2.97)$ & $3.492\mathrm{e}{-6}\;(2.99)$ & $4.320\mathrm{e}{-6}\;(2.97)$ & $4.680\mathrm{e}{-6}\;(3.33)$ & $1.864\mathrm{e}{-7}\;(4.54)$ & $2.839\mathrm{e}{-4}\;(2.35)$ \\
\bottomrule
\end{tabular}
}
\end{table}

Table~\ref{tab:balsara-vortex-spatial-parity} confirms that the parity effect
persists when both split subflows are active. At \(p=1\), the even choice
\(m=0\) reduces the finest-grid current error by a factor of 13.5 relative to
\(m=1\) and gives a finest-pair current rate of 1.92, compared with 0.02 for
the odd choice; thus, the even choice is nearly second-order accurate in the
current, whereas the odd choice shows no current convergence. At \(p=2\),
the even choice \(m=2\) gives rates of about
2.5 in the fluid variables, above third order in \(B\) and \(E_z\), and
second order in \(J_z\). The odd choice \(m=1\), by contrast, is sub-first
order in density, velocity, and pressure on the finest refinement and reaches
only 0.46 in \(J_z\).

For \(p=3\), both magnetic degrees recover approximately third-order accuracy
in the primary variables, although \(m=3\) gives smaller absolute errors. The
ordering reverses at \(p=4\): the even choice \(m=4\) gives rates
near 4.5 in the fluid variables, above five in \(B\) and \(E_z\), and 4.05 in
\(J_z\), whereas \(m=3\) is limited to approximately third order in the fluid
and magnetic variables and 2.35 in the current. Overall, the even-degree
family gives the more reliable high-order behavior.

\subsubsection{Temporal convergence}
\label{sec:balsara-vortex-temporal-convergence}

We next isolate the temporal error by fixing \((p,m)=(2,2)\) and a
\(64\times64\) mesh. Six solutions are evolved to \(t_f=1\) with
\(\Delta t_\ell=10^{-2}2^{-\ell}\), \(\ell=0,\ldots,5\). Since the direct
errors reach the fixed spatial-error floor, we measure temporal
self-convergence using the same-mesh Cauchy differences
\[
 d_q^\ell
 =\left\lVert q_h^{\Delta t_\ell}
       -q_h^{\Delta t_{\ell+1}}\right\rVert_{L^2(\Omega_e)},
 \qquad
 r_q^\ell=\log_2\!\left(\frac{d_q^{\ell-1}}{d_q^\ell}\right),
 \quad \ell\geq1.
\]
All stabilization mechanisms remain disabled.

\begin{table}[H]
\centering
\caption{Temporal self-convergence of the advected MHD vortex for
$(p,m)=(2,2)$ on a fixed $64\times64$ mesh at $t_f=1$. Here
$d_q(\Delta t)=\lVert q_h^{\Delta t}-q_h^{\Delta t/2}\rVert_{L^2(\Omega_e)}$.
Each entry gives the Cauchy difference followed by the temporal rate in
parentheses.}
\label{tab:balsara-vortex-temporal}
\scriptsize
\setlength{\tabcolsep}{3.0pt}
\resizebox{\linewidth}{!}{%
\begin{tabular}{@{}rcccccc@{}}
\toprule
$\Delta t$ & $d_\rho$ & $d_u$ & $d_p$ & $d_B$ & $d_{E_z}$ & $d_{J_z}$ \\
\midrule
$1.000\mathrm{e}{-2}$ & $2.882\mathrm{e}{-6}\;(--)$ & $3.526\mathrm{e}{-6}\;(--)$ & $4.776\mathrm{e}{-6}\;(--)$ & $1.277\mathrm{e}{-6}\;(--)$ & $1.690\mathrm{e}{-6}\;(--)$ & $6.982\mathrm{e}{-6}\;(--)$ \\
$5.000\mathrm{e}{-3}$ & $7.189\mathrm{e}{-7}\;(2.00)$ & $8.682\mathrm{e}{-7}\;(2.02)$ & $1.190\mathrm{e}{-6}\;(2.00)$ & $3.128\mathrm{e}{-7}\;(2.03)$ & $4.159\mathrm{e}{-7}\;(2.02)$ & $1.588\mathrm{e}{-6}\;(2.14)$ \\
$2.500\mathrm{e}{-3}$ & $1.799\mathrm{e}{-7}\;(2.00)$ & $2.161\mathrm{e}{-7}\;(2.01)$ & $2.977\mathrm{e}{-7}\;(2.00)$ & $7.765\mathrm{e}{-8}\;(2.01)$ & $1.034\mathrm{e}{-7}\;(2.01)$ & $3.847\mathrm{e}{-7}\;(2.05)$ \\
$1.250\mathrm{e}{-3}$ & $4.500\mathrm{e}{-8}\;(2.00)$ & $5.394\mathrm{e}{-8}\;(2.00)$ & $7.446\mathrm{e}{-8}\;(2.00)$ & $1.936\mathrm{e}{-8}\;(2.00)$ & $2.580\mathrm{e}{-8}\;(2.00)$ & $9.513\mathrm{e}{-8}\;(2.02)$ \\
$6.250\mathrm{e}{-4}$ & $1.126\mathrm{e}{-8}\;(2.00)$ & $1.348\mathrm{e}{-8}\;(2.00)$ & $1.862\mathrm{e}{-8}\;(2.00)$ & $4.833\mathrm{e}{-9}\;(2.00)$ & $6.443\mathrm{e}{-9}\;(2.00)$ & $2.368\mathrm{e}{-8}\;(2.01)$ \\
\bottomrule
\end{tabular}
}
\end{table}

Table~\ref{tab:balsara-vortex-temporal} shows uniform second-order convergence
in every reported field, confirming the expected temporal accuracy of the
fully coupled Strang composition.

\subsubsection{Spatial accuracy with stabilization}
\label{sec:balsara-vortex-stabilized-accuracy}

In all subsequent tests, we use the equal-order choice \((p,m)=(2,2)\). We
first verify on the smooth vortex that OE and artificial resistivity preserve
formal spatial accuracy. The refinement study uses \(N=16,32,64,128\),
\(\Delta t=10^{-4}\), and \(t_f=1\). We compare otherwise identical versions
of the method without stabilization, with OE and positivity limiting, and with
OE, positivity limiting, and A.R.

\begin{table}[H]
\centering
\caption{Spatial convergence of the advected MHD vortex for
$(p,m)=(2,2)$ without stabilization, with OE, and with OE plus artificial
resistivity (A.R.). The stabilized configurations use $s_H=s_M=0.02$ and
the positivity limiter; the last also uses $C_\eta=0.2$ and $C_s=5$. Each
configuration uses the same hydrodynamic--magnetic--hydrodynamic Strang
composition and spatial discretization. Each entry gives the $L^2$ error
followed by the successive-refinement rate in parentheses.}
\label{tab:balsara-vortex-stabilized-accuracy}
\scriptsize
\setlength{\tabcolsep}{1.8pt}
\resizebox{\linewidth}{!}{%
\begin{tabular}{@{}rcccccc@{}}
\toprule
$N$ & $e_\rho$ & $e_u$ & $e_p$ & $e_B$ & $e_{E_z}$ & $e_{J_z}$ \\
\midrule
\multicolumn{7}{@{}l}{\textit{No stabilization} (OE and A.R. off)} \\
16  & $9.863\mathrm{e}{-3}\;(--)$ & $3.668\mathrm{e}{-2}\;(--)$ & $1.345\mathrm{e}{-2}\;(--)$ & $8.791\mathrm{e}{-2}\;(--)$ & $1.607\mathrm{e}{-2}\;(--)$ & $4.256\mathrm{e}{-1}\;(--)$ \\
32  & $1.522\mathrm{e}{-3}\;(2.70)$ & $4.669\mathrm{e}{-3}\;(2.97)$ & $2.587\mathrm{e}{-3}\;(2.38)$ & $7.262\mathrm{e}{-3}\;(3.60)$ & $2.071\mathrm{e}{-3}\;(2.96)$ & $7.411\mathrm{e}{-2}\;(2.52)$ \\
64  & $2.829\mathrm{e}{-4}\;(2.43)$ & $8.723\mathrm{e}{-4}\;(2.42)$ & $4.998\mathrm{e}{-4}\;(2.37)$ & $4.224\mathrm{e}{-4}\;(4.10)$ & $1.719\mathrm{e}{-4}\;(3.59)$ & $8.737\mathrm{e}{-3}\;(3.08)$ \\
128 & $5.596\mathrm{e}{-5}\;(2.34)$ & $1.442\mathrm{e}{-4}\;(2.60)$ & $1.004\mathrm{e}{-4}\;(2.32)$ & $5.054\mathrm{e}{-5}\;(3.06)$ & $1.852\mathrm{e}{-5}\;(3.21)$ & $2.152\mathrm{e}{-3}\;(2.02)$ \\
\addlinespace[1.5pt]
\multicolumn{7}{@{}l}{\textit{OE without A.R.} ($C_\eta=0$)} \\
16  & $1.107\mathrm{e}{-2}\;(--)$ & $1.679\mathrm{e}{-1}\;(--)$ & $2.105\mathrm{e}{-2}\;(--)$ & $8.840\mathrm{e}{-2}\;(--)$ & $1.627\mathrm{e}{-2}\;(--)$ & $4.283\mathrm{e}{-1}\;(--)$ \\
32  & $4.056\mathrm{e}{-3}\;(1.45)$ & $4.371\mathrm{e}{-2}\;(1.94)$ & $7.408\mathrm{e}{-3}\;(1.51)$ & $7.590\mathrm{e}{-3}\;(3.54)$ & $2.289\mathrm{e}{-3}\;(2.83)$ & $7.695\mathrm{e}{-2}\;(2.48)$ \\
64  & $8.301\mathrm{e}{-4}\;(2.29)$ & $7.828\mathrm{e}{-3}\;(2.48)$ & $1.517\mathrm{e}{-3}\;(2.29)$ & $5.855\mathrm{e}{-4}\;(3.70)$ & $3.607\mathrm{e}{-4}\;(2.67)$ & $9.626\mathrm{e}{-3}\;(3.00)$ \\
128 & $9.504\mathrm{e}{-5}\;(3.13)$ & $6.243\mathrm{e}{-4}\;(3.65)$ & $1.685\mathrm{e}{-4}\;(3.17)$ & $6.440\mathrm{e}{-5}\;(3.18)$ & $3.720\mathrm{e}{-5}\;(3.28)$ & $2.271\mathrm{e}{-3}\;(2.08)$ \\
\addlinespace[1.5pt]
\multicolumn{7}{@{}l}{\textit{OE with A.R.} ($C_\eta=0.2$, $C_s=5$)} \\
16  & $8.579\mathrm{e}{-3}\;(--)$ & $1.666\mathrm{e}{-1}\;(--)$ & $2.257\mathrm{e}{-2}\;(--)$ & $7.196\mathrm{e}{-2}\;(--)$ & $5.343\mathrm{e}{-2}\;(--)$ & $2.496\mathrm{e}{-1}\;(--)$ \\
32  & $3.522\mathrm{e}{-3}\;(1.28)$ & $4.366\mathrm{e}{-2}\;(1.93)$ & $7.420\mathrm{e}{-3}\;(1.60)$ & $9.542\mathrm{e}{-3}\;(2.91)$ & $8.639\mathrm{e}{-3}\;(2.63)$ & $6.628\mathrm{e}{-2}\;(1.91)$ \\
64  & $7.957\mathrm{e}{-4}\;(2.15)$ & $7.828\mathrm{e}{-3}\;(2.48)$ & $1.497\mathrm{e}{-3}\;(2.31)$ & $7.238\mathrm{e}{-4}\;(3.72)$ & $5.544\mathrm{e}{-4}\;(3.96)$ & $9.625\mathrm{e}{-3}\;(2.78)$ \\
128 & $9.353\mathrm{e}{-5}\;(3.09)$ & $6.244\mathrm{e}{-4}\;(3.65)$ & $1.674\mathrm{e}{-4}\;(3.16)$ & $6.891\mathrm{e}{-5}\;(3.39)$ & $4.572\mathrm{e}{-5}\;(3.60)$ & $2.269\mathrm{e}{-3}\;(2.08)$ \\
\bottomrule
\end{tabular}
}
\end{table}

Table~\ref{tab:balsara-vortex-stabilized-accuracy} shows comparable
asymptotic behavior for all three configurations. On the finest refinement,
the unstabilized primary-variable rates are 2.32--3.21, with a rate of 2.02
for the differentiated current \(J_z\). With OE alone, these rates are
3.13--3.65 and 2.08, respectively; with OE and A.R., they are 3.09--3.65 and
2.08. Thus, neither stabilization configuration reduces the observed order,
and the two stabilized runs have comparable finest-grid errors. Moreover,
the final-step maximum A.R. coefficient decreases monotonically from
\(6.46\times10^{-2}\) at \(N=16\) to \(3.48\times10^{-5}\) at \(N=128\),
with rates 3.99 and 4.13 over the final two refinements. The rapid decay of
this coefficient shows that the magnetic sensor becomes asymptotically
inactive as the smooth solution is resolved. Together, these results confirm
that OE and A.R. retain the observed high-order accuracy of the underlying
discretization on smooth solutions.

\subsection{Advection of a magnetic-field loop}
\label{sec:fieldloop}

The weak-field loop-advection problem introduced by Gardiner and
Stone~\cite{gardiner2005unsplit} is a standard test of multidimensional
magnetic transport and divergence control
\cite{stone2008athena,guillet2019highorder,seo2023howmhd}. The magnetic field
is effectively advected as a passive quantity, while the discontinuities in
its derivatives at the center and perimeter make the current density a
particularly sensitive diagnostic of numerical noise.

We use the periodic domain
\(\Omega=[-1,1]\times[-1/2,1/2]\), with
\(\rho=p=1\), \(\gamma=5/3\), and
\(\boldsymbol u=(2,1,0)^T\). The loop is initialized through
\[
 \boldsymbol B=\nabla\times(A_z\boldsymbol e_z),
 \qquad
 A_z(r)=A_0\max(R-r,0),
 \qquad
 A_0=10^{-3},\quad R=0.3,
\]
where \(r=(x^2+y^2)^{1/2}\). Since the fluid traverses both periodic lengths
in one unit of time, the exact loop returns to its initial position at
\(t_f=1\). The calculations in Fig.~\ref{fig:fieldloop-ar} use a
\(256\times128\) mesh and CFL number 0.8.

\begin{figure*}[t]
 \centering
 \includegraphics[width=\textwidth]{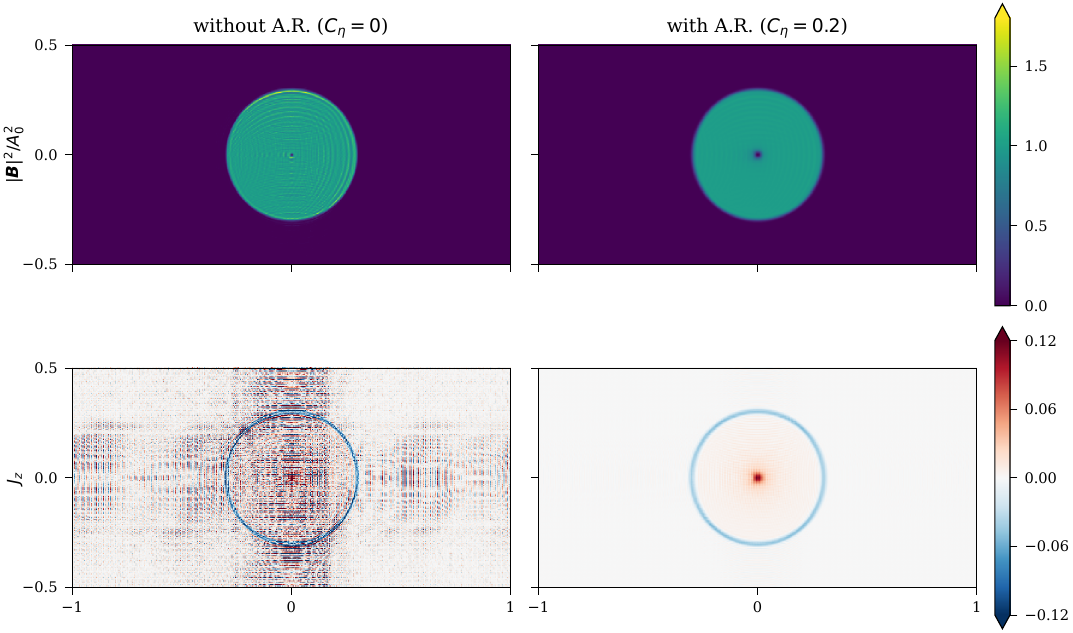}
 \caption{Advected magnetic-field loop at \(t=1\) after one traversal of the
 periodic domain. The top row shows the normalized magnetic energy, and the
 bottom row shows the current density. The left and right columns respectively
 omit and include artificial resistivity. Each row uses a common color scale.
 To resolve the structure of the current ring, the \(J_z\) scale is saturated at
 \(\lvert J_z\rvert=0.12\); the actual extrema are
 \([-0.349,0.482]\) without A.R. and \([-0.0537,0.119]\) with A.R.}
 \label{fig:fieldloop-ar}
\end{figure*}

Without A.R., the magnetic field remains sharply confined but develops a
pronounced overshoot at the loop perimeter, and the differentiated current
reveals grid-aligned oscillations throughout the domain. Artificial
resistivity suppresses these oscillations and recovers a clean central
current and circular return-current ring. Quantitatively, the maximum field
amplitude decreases from \(1.412A_0\) without A.R. to \(1.018A_0\) with
A.R. The compatible update retains its divergence control: the final values
of \(\|\nabla\cdot\boldsymbol B_h\|_{L^2}\) are
\(2.44\times10^{-15}\) and \(2.42\times10^{-15}\), respectively. This
improvement in nonoscillatory behavior comes with additional magnetic
dissipation. The retained magnetic energy decreases from \(99.87\%\) without
A.R. to \(96.45\%\) with A.R.
To measure the error in the transported magnetic profile, we use
\[
 e_B^{\rm tr}(t)
 =\frac{\left\lVert\boldsymbol B_h(\cdot,t)
      -\boldsymbol B(\cdot-\boldsymbol u t,0)\right\rVert_{L^2(\Omega)}}
 {\left\lVert\boldsymbol B(\cdot-\boldsymbol u t,0)\right\rVert_{L^2(\Omega)}},
\]
where the translation is interpreted periodically. At \(t=1\), this target
is the initial analytic loop. The errors are 0.11827 without A.R. and 0.11839
with A.R. Their near equality indicates that A.R. replaces grid-scale
oscillatory error with smoother dissipative error while leaving the overall
relative \(L^2\)-error magnitude essentially unchanged.

\subsection{Orszag--Tang vortex}
\label{sec:orszag-tang}

The Orszag--Tang vortex~\cite{orszag1979small} evolves
smooth initial data into interacting shocks and current sheets. On the
periodic domain \([0,2\pi]^2\), the initial conditions are
\begin{align*}
 \rho&=\gamma^2, & p&=\gamma, & \gamma&=5/3,\\
 \boldsymbol u&=(-\sin y,\sin x)^T, &
 \boldsymbol B&=(-\sin y,\sin(2x))^T.&&
\end{align*}
We apply Algorithm~\ref{alg:strang-step} with polynomial degrees \(p=m=2\) on
a \(256\times256\) mesh, with and without A.R., and evolve both solutions to
\(t=3\). Figure~\ref{fig:ot-ar} shows equally spaced contours of the density,
magnetic pressure \(p_m=\lvert\boldsymbol B\rvert^2/2\), and acoustic Mach
number \(M=\lvert\boldsymbol u\rvert/\sqrt{\gamma p/\rho}\). For each quantity,
identical contour levels are used in the two calculations to enable a direct
comparison.

\begin{figure*}[t]
 \centering
 \includegraphics[width=\textwidth]{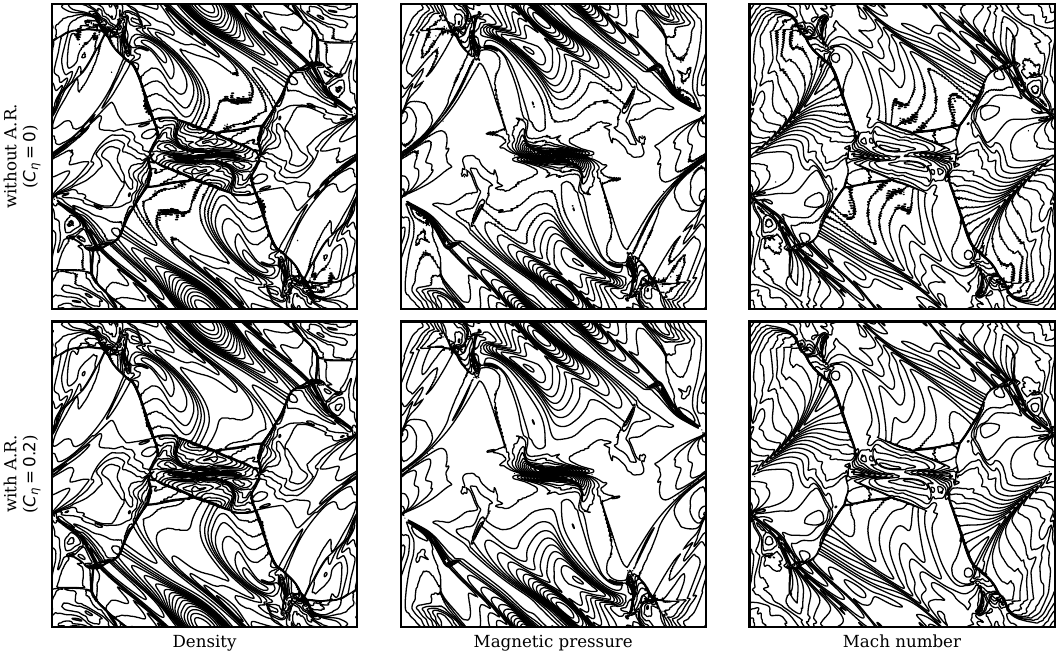}
 \caption{Orszag--Tang vortex at \(t=3\) on a \(256\times256\) mesh.
 Twenty contour lines are shown for density \(\rho\) (left), magnetic
 pressure \(p_m\) (middle), and acoustic Mach number \(M\) (right). The top
 and bottom rows use \(C_\eta=0\) and \(C_\eta=0.2\), respectively. In both
 rows, the 20 common contour levels span \([1.066,6.163]\) for \(\rho\),
 \([0.001,5.060]\) for \(p_m\), and \([0.001,2.614]\) for \(M\).}
 \label{fig:ot-ar}
\end{figure*}

The solutions in Fig.~\ref{fig:ot-ar} have the same shock topology and nearly
identical density maxima: 6.1604 without A.R. and 6.1594 with A.R. Their
magnetic-pressure and Mach-number contours retain the dominant sheets,
compression zones, and interacting wave fronts. In the
nonresistive calculation, small grid-scale contour corrugations appear near
several shock and current-sheet intersections; A.R. suppresses these features
without broadening the resolved large-scale structures. The final value of
\(\|\nabla\cdot\boldsymbol B_h\|_{L^2}\) is \(1.14\times10^{-11}\) in both
calculations.

\subsection{MHD rotor}
\label{sec:mhd-rotor}

The two-dimensional MHD rotor benchmark was introduced by Balsara and
Spicer~\cite{balsara1999staggered}. The test probes strong torsional Alfv\'en
waves launched by a dense, rapidly rotating core into a quiescent magnetized
fluid. On \(\Omega=[0,1]^2\), we impose mirror boundary conditions and center
the rotor at \((1/2,1/2)\). It has density 10 for \(r<0.1\) and rotates rigidly
with unit tangential speed at \(r=0.1\). Over \(0.1\leq r<0.115\), its density
and tangential speed taper linearly to the ambient values \(\rho=1\) and
\(\boldsymbol u=\boldsymbol0\). The initial pressure is \(p=0.5\), the
adiabatic index is \(\gamma=5/3\), and the uniform magnetic field is
\(\boldsymbol B=(2.5/\sqrt{4\pi},0)^T\). We apply
Algorithm~\ref{alg:strang-step} with \(p=m=2\) on a \(256\times256\) mesh,
with and without A.R., and evolve both solutions to \(t=0.295\).

\begin{figure*}[t]
 \centering
 \includegraphics[width=\textwidth]{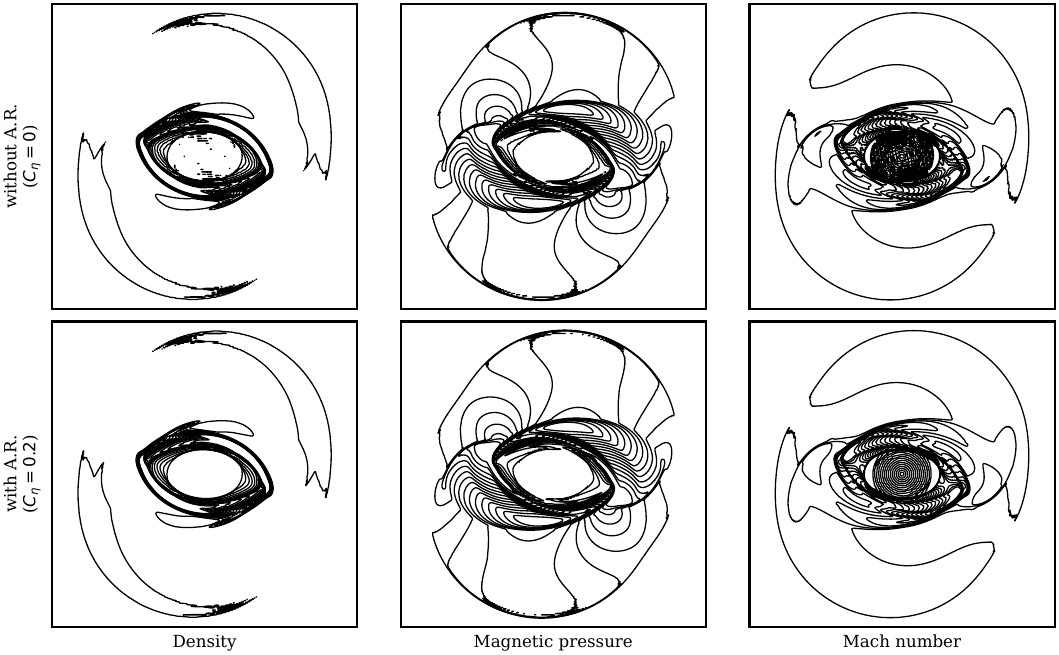}
 \caption{MHD rotor at \(t=0.295\) on a \(256\times256\) mesh. Fifteen
 contour lines are shown for density \(\rho\) (left), magnetic pressure
 \(p_m\) (middle), and acoustic Mach number \(M\) (right). The top and bottom
 rows use \(C_\eta=0\) and \(C_\eta=0.2\), respectively. In both rows, the 15
 common contour levels span \([0.730,7.330]\) for \(\rho\),
 \([0.059,0.655]\) for \(p_m\), and
 \([0.147,2.270]\) for \(M\).}
 \label{fig:rotor-ar}
\end{figure*}

Both calculations in Fig.~\ref{fig:rotor-ar} resolve the expected magnetic
braking and deformation of the dense core. The large-scale density,
magnetic-pressure, and Mach-number contours agree closely. Without A.R., a
distinct grid-scale pattern appears inside the rotor, most clearly in the
density and Mach-number contours. A.R. removes this contamination while
preserving the compressed magnetic layers and the overall rotor geometry.
The actual magnetic-pressure maxima, slightly above the displayed contour
range, remain nearly unchanged: 0.7051 without A.R. and 0.7038 with A.R. The
final-step maximum A.R. coefficient is
\(8.63\times10^{-5}\). The final divergence diagnostics also agree:
\(\|\nabla\cdot\boldsymbol B_h\|_{L^2}=4.60\times10^{-12}\) for both
calculations. Thus, the added dissipation is localized by the magnetic sensor
and does not compromise the compatible divergence control.

\subsection{Oblique MHD blast wave}
\label{sec:mhd-blast}

The magnetized blast wave is a stringent multidimensional test because a
strong pressure front propagates both along and across the magnetic field
\cite{balsara1999staggered,guillet2019highorder}. On the periodic domain
\(\Omega=[0,1]^2\), we initialize \(\rho=1\),
\(\boldsymbol u=\boldsymbol0\), and
\(\boldsymbol B=(\cos(\pi/4),\sin(\pi/4))^T\). The pressure is 10 in the
disk of radius 0.1 centered at \((1/2,1/2)\) and 0.1 outside it. We take
\(\gamma=5/3\), use a \(256\times256\) mesh, and evolve both calculations to
\(t=0.2\).

\begin{figure*}[t]
 \centering
 \includegraphics[width=\textwidth]{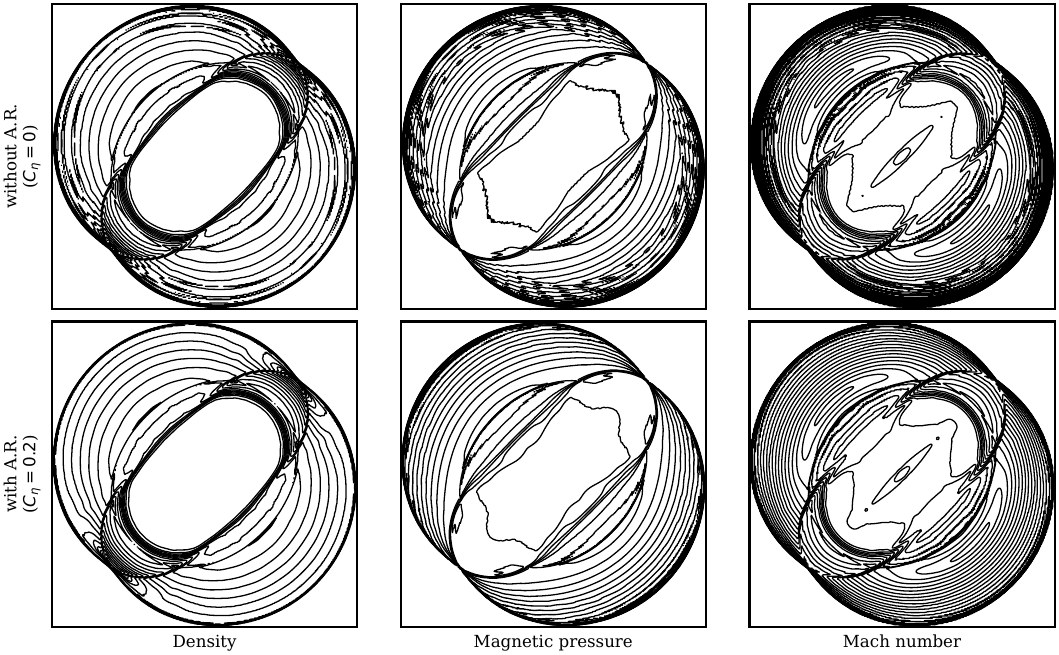}
 \caption{Oblique MHD blast wave at \(t=0.2\) on a \(256\times256\) mesh.
 Twenty contour lines are shown for density \(\rho\) (left), magnetic
 pressure \(p_m\) (middle), and acoustic Mach number \(M\) (right). The top
 and bottom rows use \(C_\eta=0\) and \(C_\eta=0.2\), respectively. Each
 column uses the same contour levels in both rows: \([0.200,2.800]\) for
 \(\rho\),
 \([0.100,1.800]\) for \(p_m\), and \([0.050,1.500]\) for \(M\).}
 \label{fig:blast-ar}
\end{figure*}

The density, magnetic-pressure, and Mach-number contours in
Fig.~\ref{fig:blast-ar} retain the same anisotropic shock geometry with and
without A.R. The nonresistive calculation develops fine contour corrugations
along the curved outer front and weaker grid-scale structure behind it. A.R.
strongly attenuates these oscillations while retaining the leading wave fronts
and the field-aligned structure of the blast. The final-step maximum A.R.
coefficient is
\(3.18\times10^{-4}\); the associated regularization reduces the
magnetic-pressure maximum from 2.25 to 1.77. The final divergence norm is
\(\|\nabla\cdot\boldsymbol B_h\|_{L^2}=1.04\times10^{-11}\) in both runs,
demonstrating that the curl-form regularization leaves magnetic compatibility
intact.

\subsection{Oblique-field Kelvin--Helmholtz instability}
\label{sec:mhd-kh}

Finally, we consider Case~3 of Jones et al.~\cite{jones1997mhd}, a 2.5D
Kelvin--Helmholtz instability with a weak oblique magnetic field. The square
domain \([0,L]^2\), \(L=2.51\), is periodic in \(x\) and reflecting in \(y\).
The unperturbed shear layer is
\begin{align*}
 \rho_0&=1, & p_0&=0.6,\\
 u_{x,0}&=-0.5\tanh\!\left(\frac{y-L/2}{0.1004}\right), &
 \boldsymbol B_0&=0.2(\cos45^\circ,0,\sin45^\circ)^T,
\end{align*}
with \(\gamma=5/3\). We perturb the primitive state
\(\boldsymbol q=(\rho,\boldsymbol u,p,\boldsymbol B)\) by its fastest-growing
fundamental streamwise normal mode,
\(\delta\boldsymbol q(x,y,t)=\operatorname{Re}\{\widehat{\boldsymbol q}(y)
e^{\mathrm{i}kx+\Gamma t}\}\), with \(k=2\pi/L\). The eigenfunction is
computed from the one-dimensional linearized compressible ideal-MHD problem
using 160 Chebyshev--Lobatto intervals. We impose reflecting-wall parity
(\(\widehat u_y,\widehat B_y\) odd and the remaining primitive components
even) and enforce
\(\mathrm{i}k\widehat B_x+\partial_y\widehat B_y=0\). Normalizing the mode by
\(\lvert\widehat{\delta p}_{\mathrm{tot}}(L)\rvert
=0.01p_{\mathrm{tot},0}\), where
\(p_{\mathrm{tot},0}=p_0+\lvert\boldsymbol B_0\rvert^2/2\), gives
\(\Gamma=0.58370\) and \(t_g=\Gamma^{-1}\simeq1.71\). We report the
normalized time \(\tau=t/t_g\). We apply Algorithm~\ref{alg:strang-step} with
polynomial degree \(p=m=2\) on a \(256\times256\) mesh, with and without
A.R., and evolve both calculations to \(\tau=10\).

\begin{figure*}[t]
 \centering
 \includegraphics[width=\textwidth]{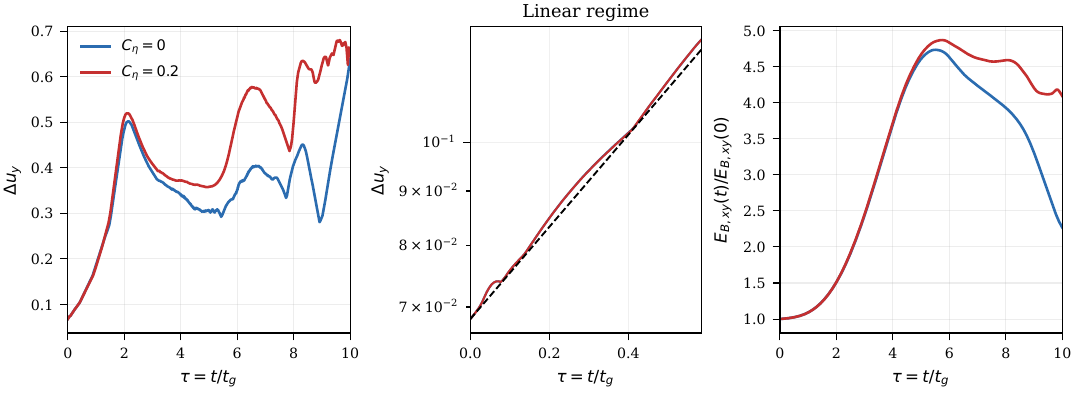}
 \caption{Kelvin--Helmholtz evolution with and without A.R. Left: transverse
 velocity amplitude
 \(\Delta u_y=(\max_\Omega u_y-\min_\Omega u_y)/2\) over the full simulation.
 Center: early-time \(\Delta u_y\) on a logarithmic scale; the dashed line is
 the linear prediction \(\Delta u_y(0)e^{\Gamma t}\). Right: poloidal magnetic
 energy normalized by its initial value. The two histories agree through the
 linear regime and separate only after nonlinear roll-up and current-sheet
 formation.}
 \label{fig:kh-diagnostics}
\end{figure*}

The fitted growth rates over \(0\leq t\leq1\) are 0.59096 for
\(C_\eta=0\) and 0.59102 for \(C_\eta=0.2\), compared with the computed
linear eigenvalue 0.58370. Thus, both are within 1.3\% of linear theory, and
A.R. does not measurably alter the seeded instability during its smooth
linear phase. In the nonlinear regime, magnetic stretching increases the
poloidal magnetic energy to peak factors of 4.74 and 4.87 without and with
A.R., respectively. After the peak, the nonresistive calculation loses
poloidal magnetic energy more rapidly, ending at factors of 2.27 and 4.09,
respectively, whereas the final transverse-velocity amplitudes remain close
(0.639 and 0.645). Although A.R. is locally dissipative, the larger retained
magnetic energy results from its modification of reconnection and subsequent
magnetic stretching.

\begin{figure*}[t]
 \centering
 \includegraphics[width=\textwidth]{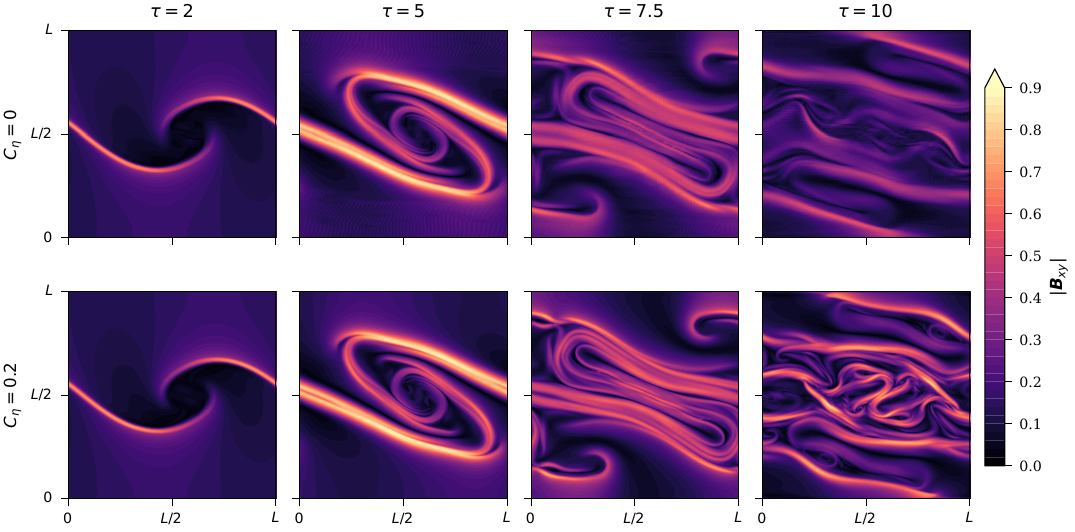}
 \caption{Magnitude of the poloidal magnetic field for the
 Kelvin--Helmholtz problem at \(\tau=2,5,7.5,10\). The top row omits A.R.; the
 bottom row uses \(C_\eta=0.2\). One common color scale is used in all panels.
 The solutions agree through the primary roll-up. At later times, A.R.
 reduces the faint grid-aligned texture visible in the nonresistive result
 and changes the secondary-sheet and filament topology.}
 \label{fig:kh-snapshots}
\end{figure*}

Figure~\ref{fig:kh-snapshots} shows
\(\lvert\boldsymbol B_{xy}\rvert\) at four times, resolving the initial
roll-up, magnetic-field stretching, and subsequent formation of secondary
sheets and filaments. The two calculations remain visually close through
\(\tau=5\). At \(\tau=7.5\), the nonresistive result develops faint, nearly
grid-aligned texture away from the principal magnetic layers, which is
reduced by A.R. By \(\tau=10\), the filament topologies differ substantially
because reconnection has redirected the nonlinear evolution; this separation
should therefore be interpreted as regularization sensitivity. The
final-step maximum A.R. coefficient is \(2.94\times10^{-4}\). In both cases,
the relative total-energy change is below \(1.6\times10^{-4}\), and the final
value of \(\|\nabla\cdot\boldsymbol B_h\|_{L^2}\) is approximately
\(1.1\times10^{-11}\).

\section{Conclusions}
\label{sec:conclusion}

We have developed a high-order, fully explicit operator-splitting method for
2.5D ideal MHD on conforming Cartesian rectangular meshes. The hydrodynamic
subflow is advanced by an entropy-stable DGSEM with OEDG stabilization and a
positivity limiter, whereas the magnetic--velocity subflow uses compatible
finite elements for the magnetic field, electric field, and current density.
Diagonal collocated or mass-lumped products make the auxiliary
reconstructions pointwise, and the magnetic mass matrix is applied but never
inverted. The two maps are coupled by a hydrodynamic--magnetic--hydrodynamic
Strang composition.

The analysis distinguishes semidiscrete identities from properties of the
stabilized, fully discrete algorithm. The Euler semidiscretization is
conservative and satisfies an entropy inequality; under the stated
time-step condition, its forward-Euler cell averages remain admissible and
the scaling limiter restores nodal admissibility. The compatible magnetic
semidiscretization preserves the global \(H(\mathrm{div})\) divergence-free
subspace. Under periodic boundary conditions, its ideal coupling exactly
exchanges kinetic and magnetic energy, while artificial resistivity transfers
magnetic energy to internal energy through Ohmic heating. The density-weighted
velocity filter preserves element momentum and returns its kinetic-energy
loss as a nonnegative nodal internal-energy increment. These transfers imply
positive internal energy and a discrete entropy inequality for the stabilized
magnetic stage. Consequently, accepted full steps retain global mass,
nodal admissibility, and compatible magnetic divergence. The complete method
does not exactly conserve total energy or momentum at the fully discrete
level: explicit Runge--Kutta integration does not preserve the quadratic
magnetic-stage energy in general, and the discrete Lorentz-force update is not
globally momentum conservative.

The numerical results support both the accuracy and stabilization design. The
reduced Alfv\'en wave and fully coupled advected vortex show a systematic
dependence on the magnetic-degree parity, with the even-\(m\) family giving the
more reliable high-order behavior. Most notably, the current converges for
\((p,m)=(1,0)\) but stagnates for the equal-degree pair \((1,1)\). A same-mesh
Cauchy study verifies second-order temporal convergence of the Strang
algorithm, and the stabilized vortex study shows that hydrodynamic and
magnetic OE together with artificial resistivity retain the observed spatial
orders. In the nonsmooth experiments, artificial resistivity suppresses
grid-aligned current oscillations in the advected field loop and small-scale
contour corrugations in the Orszag--Tang, rotor, and blast-wave problems without
altering their resolved large-scale structures. In the Kelvin--Helmholtz test,
it leaves the linear growth rate unchanged and removes grid-scale magnetic
banding after nonlinear roll-up. Across these calculations, the reported
magnetic-divergence diagnostics remain between approximately
\(10^{-15}\) and \(10^{-11}\).

Several directions remain for future work. A fully discrete entropy analysis
that includes Euler-stage stabilization would complement the present
semidiscrete estimate. Although effective in the reported tests, the A.R.
coefficient is empirically calibrated; alternative sensor and coefficient
designs merit investigation. The present
tensor-product mass lumping and directional OE formulation target fixed,
coordinate-aligned Cartesian meshes, and extensions to curvilinear or
nonconforming meshes will require compatible geometric constructions and
corresponding stability analysis. Further directions include extension to
full three-dimensional MHD and energy-preserving time integration.

\par\medskip
\noindent\textbf{Acknowledgments.}
This work was partially supported by the National Science Foundation
under Grant DMS-2410741.

\par\medskip
\noindent\textbf{Declaration of competing interest.}
The authors declare that they have no known competing financial interests or
personal relationships that could have appeared to influence the work reported
in this paper.

\par\medskip
\noindent\textbf{Data availability.}
Data will be made available on request.

\par\medskip
\noindent\textbf{Declaration of generative AI and AI-assisted technologies in
the manuscript preparation process.}
During the preparation of this work, the authors used ChatGPT (OpenAI) and
Codex (OpenAI) to improve the language and readability of the manuscript and
to assist with software development, respectively.  After using these tools,
the authors reviewed and edited the content as needed and take full
responsibility for the content of the published article.

\bibliographystyle{spmpsci}
\bibliography{references}

\begin{thebibliography}{10}
\providecommand{\url}[1]{{#1}}
\providecommand{\urlprefix}{URL }
\expandafter\ifx\csname urlstyle\endcsname\relax
  \providecommand{\doi}[1]{DOI~\discretionary{}{}{}#1}\else
  \providecommand{\doi}{DOI~\discretionary{}{}{}\begingroup
  \urlstyle{rm}\Url}\fi

\bibitem{anderson2021mfem}
Anderson, R., Andrej, J., Barker, A., Bramwell, J., Camier, J.S., Cerveny, J.,
  Dobrev, V., Dudouit, Y., Fisher, A., Kolev, T., Pazner, W., Stowell, M.,
  Tomov, V., Akkerman, I., Dahm, J., Medina, D., Zampini, S.: {MFEM}: A modular
  finite element methods library.
\newblock Computers \& Mathematics with Applications \textbf{81}, 42--74
  (2021).
\newblock \doi{10.1016/j.camwa.2020.06.009}

\bibitem{arnold2006feec}
Arnold, D.N., Falk, R.S., Winther, R.: Finite element exterior calculus,
  homological techniques, and applications.
\newblock Acta Numerica \textbf{15}, 1--155 (2006).
\newblock \doi{10.1017/S0962492906210018}

\bibitem{balsara2004second}
Balsara, D.S.: Second-order-accurate schemes for magnetohydrodynamics with
  divergence-free reconstruction.
\newblock The Astrophysical Journal Supplement Series \textbf{151}(1), 149--184
  (2004).
\newblock \doi{10.1086/381377}

\bibitem{balsara1999staggered}
Balsara, D.S., Spicer, D.S.: A staggered mesh algorithm using high order
  godunov fluxes to ensure solenoidal magnetic fields in magnetohydrodynamic
  simulations.
\newblock Journal of Computational Physics \textbf{149}(2), 270--292 (1999).
\newblock \doi{10.1006/jcph.1998.6153}

\bibitem{barter2010shock}
Barter, G.E., Darmofal, D.L.: Shock capturing with {PDE}-based artificial
  viscosity for {DGFEM}: {Part I}. formulation.
\newblock Journal of Computational Physics \textbf{229}(5), 1810--1827 (2010).
\newblock \doi{10.1016/j.jcp.2009.11.010}

\bibitem{BRACKBILL1980426}
Brackbill, J.U., Barnes, D.C.: The effect of nonzero $\nabla \cdot \boldsymbol
  b$ on the numerical solution of the magnetohydrodynamic equations.
\newblock Journal of Computational Physics \textbf{35}(3), 426--430 (1980).
\newblock \doi{10.1016/0021-9991(80)90079-0}

\bibitem{chan2018discretely}
Chan, J.: On discretely entropy conservative and entropy stable discontinuous
  {Galerkin} methods.
\newblock Journal of Computational Physics \textbf{362}, 346--374 (2018).
\newblock \doi{10.1016/j.jcp.2018.02.033}

\bibitem{chandrashekar2013kinetic}
Chandrashekar, P.: Kinetic energy preserving and entropy stable finite volume
  schemes for compressible {Euler} and {Navier--Stokes} equations.
\newblock Communications in Computational Physics \textbf{14}(5), 1252--1286
  (2013).
\newblock \doi{10.4208/cicp.170712.010313a}

\bibitem{cohen1998mass}
Cohen, G., Monk, P.: Gauss point mass lumping schemes for {Maxwell}'s
  equations.
\newblock Numerical Methods for Partial Differential Equations \textbf{14}(1),
  63--88 (1998).
\newblock \doi{10.1002/(SICI)1098-2426(199801)14:1<63::AID-NUM4>3.0.CO;2-J}

\bibitem{dao2024structure}
Dao, T., Nazarov, M., Tomas, I.: A structure preserving numerical method for
  the ideal compressible {MHD} system.
\newblock Journal of Computational Physics \textbf{508}, 113009 (2024).
\newblock \doi{10.1016/j.jcp.2024.113009}

\bibitem{derigs2016mhdsolver}
Derigs, D., Winters, A.R., Gassner, G.J., Walch, S.: A novel high-order,
  entropy stable, {3D} {AMR} {MHD} solver with guaranteed positive pressure.
\newblock Journal of Computational Physics \textbf{317}, 223--256 (2016).
\newblock \doi{10.1016/j.jcp.2016.04.048}

\bibitem{dzanic2023mhdentropyfilter}
Dzanic, T., Witherden, F.D.: Positivity-preserving entropy filtering for ideal
  magnetohydrodynamics.
\newblock Computers \& Fluids \textbf{266}, 106056 (2023).
\newblock \doi{10.1016/j.compfluid.2023.106056}

\bibitem{evans1988ct}
Evans, C., Hawley, J.: Simulation of magnetohydrodynamic flows: A constrained
  transport method.
\newblock The Astrophysical Journal \textbf{332}, 659--677 (1988).
\newblock \doi{10.1086/166684}

\bibitem{FuLiu26}
Fu, G., Liu, J.G.: Entropy-stable and physical-constraint-preserving {DGSEM}
  for symmetry-reduced general-relativistic hydrodynamics on stationary
  spacetimes.
\newblock arXiv:2608.29229 (2026).
\newblock \urlprefix\url{https://arxiv.org/abs/2608.29229}

\bibitem{10.1007/s10915-018-0750-6}
Fu, P., Li, F., Xu, Y.: Globally divergence-free discontinuous galerkin methods
  for ideal magnetohydrodynamic equations.
\newblock Journal of Scientific Computing \textbf{77}(3), 1621--1659 (2018).
\newblock \doi{10.1007/s10915-018-0750-6}

\bibitem{fuchs2009splitting}
Fuchs, F., Mishra, S., Risebro, N.: Splitting based finite volume schemes for
  ideal {MHD} equations.
\newblock Journal of Computational Physics \textbf{228}(3), 641--660 (2009).
\newblock \doi{10.1016/j.jcp.2008.09.027}

\bibitem{gardiner2005unsplit}
Gardiner, T., Stone, J.: An unsplit {Godunov} method for ideal {MHD} via
  constrained transport.
\newblock Journal of Computational Physics \textbf{205}(2), 509--539 (2005).
\newblock \doi{10.1016/j.jcp.2004.11.016}

\bibitem{gassner2016split}
Gassner, G.J.: A skew-symmetric discontinuous {Galerkin} spectral element
  discretization and its relation to {SBP-SAT} finite difference methods.
\newblock SIAM Journal on Scientific Computing \textbf{35}(3), A1233--A1253
  (2013).
\newblock \doi{10.1137/120890144}

\bibitem{gassner2016splitform}
Gassner, G.J., Winters, A.R., Kopriva, D.A.: Split form nodal discontinuous
  {Galerkin} schemes with summation-by-parts property for the compressible
  {Euler} equations.
\newblock Journal of Computational Physics \textbf{327}, 39--66 (2016).
\newblock \doi{10.1016/j.jcp.2016.09.013}

\bibitem{goedbloed2019magnetohydrodynamics}
Goedbloed, J.P., Keppens, R., Poedts, S.: Magnetohydrodynamics of Laboratory
  and Astrophysical Plasmas.
\newblock Cambridge University Press, Cambridge (2019).
\newblock \doi{10.1017/9781316403679}

\bibitem{gottlieb2001strong}
Gottlieb, S., Shu, C.W., Tadmor, E.: Strong stability-preserving high-order
  time discretization methods.
\newblock SIAM Review \textbf{43}(1), 89--112 (2001).
\newblock \doi{10.1137/S003614450036757X}

\bibitem{guermond2016wavespeed}
Guermond, J.L., Popov, B.: Fast estimation from above of the maximum wave speed
  in the {Riemann} problem for the {Euler} equations.
\newblock Journal of Computational Physics \textbf{321}, 908--926 (2016).
\newblock \doi{10.1016/j.jcp.2016.05.054}

\bibitem{guillet2019highorder}
Guillet, T., Pakmor, R., Springel, V., Chandrashekar, P., Klingenberg, C.:
  High-order magnetohydrodynamics for astrophysics with an adaptive mesh
  refinement discontinuous {Galerkin} scheme.
\newblock Monthly Notices of the Royal Astronomical Society \textbf{485}(3),
  4209--4246 (2019).
\newblock \doi{10.1093/mnras/stz314}

\bibitem{hu2017stable}
Hu, K., Ma, Y., Xu, J.: Stable finite element methods preserving
  {$\nabla\cdot\boldsymbol{B}=0$} exactly for {MHD} models.
\newblock Numerische Mathematik \textbf{135}(2), 371--396 (2017).
\newblock \doi{10.1007/s00211-016-0803-4}

\bibitem{hu2019structure}
Hu, K., Xu, J.: Structure-preserving finite element methods for stationary
  {MHD} models.
\newblock Mathematics of Computation \textbf{88}(316), 553--581 (2019).
\newblock \doi{10.1090/mcom/3341}

\bibitem{jones1997mhd}
Jones, T.W., Gaalaas, J.B., Ryu, D., Frank, A.: The {MHD} {Kelvin--Helmholtz}
  instability. {II}. the roles of weak and oblique fields in planar flows.
\newblock The Astrophysical Journal \textbf{482}(1), 230--244 (1997).
\newblock \doi{10.1086/304145}

\bibitem{Li2005LocallyDD}
Li, F., Shu, C.W.: Locally divergence-free discontinuous galerkin methods for
  mhd equations.
\newblock Journal of Scientific Computing \textbf{22-23}(1--3), 413--442
  (2005).
\newblock \doi{10.1007/s10915-004-4146-4}

\bibitem{liu2025mhd}
Liu, M., Wu, K.: Structure-preserving oscillation-eliminating discontinuous
  {Galerkin} schemes for ideal {MHD} equations: Locally divergence-free and
  positivity-preserving.
\newblock Journal of Computational Physics \textbf{527}, 113795 (2025).
\newblock \doi{10.1016/j.jcp.2025.113795}

\bibitem{liu2026gdf}
Liu, M., Wu, K., Yuan, Y.: High-order positivity-preserving and globally
  divergence-free discontinuous {Galerkin} methods for ideal {MHD} with
  conditional energy conservation.
\newblock Journal of Computational Physics \textbf{562}, 115031 (2026).
\newblock \doi{10.1016/j.jcp.2026.115031}

\bibitem{liu2022essentially}
Liu, Y., Lu, J., Shu, C.W.: An essentially oscillation-free discontinuous
  {Galerkin} method for hyperbolic systems.
\newblock SIAM Journal on Scientific Computing \textbf{44}(1), A230--A259
  (2022).
\newblock \doi{10.1137/21M140835X}

\bibitem{liu2025mhdentropy}
Liu, Y., Lu, J., Shu, C.W.: An entropy stable essentially oscillation-free
  discontinuous {Galerkin} method for solving ideal magnetohydrodynamic
  equations.
\newblock Journal of Computational Physics \textbf{530}, 113911 (2025).
\newblock \doi{10.1016/j.jcp.2025.113911}

\bibitem{lu2021oscillation}
Lu, J., Liu, Y., Shu, C.W.: An oscillation-free discontinuous {Galerkin} method
  for scalar hyperbolic conservation laws.
\newblock SIAM Journal on Numerical Analysis \textbf{59}(3), 1299--1324 (2021).
\newblock \doi{10.1137/20M1354192}

\bibitem{nedelec1980}
N{\'e}d{\'e}lec, J.C.: Mixed finite elements in $\mathbb{R}^3$.
\newblock Numerische Mathematik \textbf{35}(3), 315--341 (1980).
\newblock \doi{10.1007/BF01396415}

\bibitem{orszag1979small}
Orszag, S.A., Tang, C.M.: Small-scale structure of two-dimensional
  magnetohydrodynamic turbulence.
\newblock Journal of Fluid Mechanics \textbf{90}(1), 129--143 (1979).
\newblock \doi{10.1017/S002211207900210X}

\bibitem{pang2025ct}
Pang, D., Wu, K.: Provably positivity-preserving constrained transport scheme
  for {2D} and {3D} ideal magnetohydrodynamics.
\newblock Journal of Computational Physics \textbf{541}, 114312 (2025).
\newblock \doi{10.1016/j.jcp.2025.114312}

\bibitem{peng2025oedg}
Peng, M., Sun, Z., Wu, K.: {OEDG}: Oscillation-eliminating discontinuous
  {Galerkin} method for hyperbolic conservation laws.
\newblock Mathematics of Computation \textbf{94}, 1147--1198 (2025).
\newblock \doi{10.1090/mcom/3998}

\bibitem{persson2006subcell}
Persson, P.O., Peraire, J.: Sub-cell shock capturing for discontinuous
  {Galerkin} methods.
\newblock In: 44th {AIAA} Aerospace Sciences Meeting and Exhibit, pp. AIAA
  Paper 2006--0112 (2006).
\newblock \doi{10.2514/6.2006-112}

\bibitem{Powell1997}
Powell, K.G.: An approximate riemann solver for magnetohydrodynamics.
\newblock In: M.Y. Hussaini, B.~van Leer, J.~Van~Rosendale (eds.) Upwind and
  High-Resolution Schemes, pp. 570--583. Springer Berlin Heidelberg, Berlin,
  Heidelberg (1997).
\newblock \doi{10.1007/978-3-642-60543-7_23}.
\newblock \urlprefix\url{https://doi.org/10.1007/978-3-642-60543-7_23}

\bibitem{raviart1977}
Raviart, P.A., Thomas, J.M.: A mixed finite element method for second order
  elliptic problems.
\newblock In: I.~Galligani, E.~Magenes (eds.) Mathematical Aspects of Finite
  Element Methods, \emph{Lecture Notes in Mathematics}, vol. 606, pp. 292--315.
  Springer, Berlin, Heidelberg (1977).
\newblock \doi{10.1007/BFb0064470}

\bibitem{seo2023howmhd}
Seo, J., Ryu, D.: {HOW-MHD}: A high-order {WENO}-based magnetohydrodynamic code
  with a high-order constrained transport algorithm for astrophysical
  applications.
\newblock The Astrophysical Journal \textbf{953}(1), 39 (2023).
\newblock \doi{10.3847/1538-4357/acdf4b}

\bibitem{stone2008athena}
Stone, J.M., Gardiner, T.A., Teuben, P., Hawley, J.F., Simon, J.B.: Athena: A
  new code for astrophysical {MHD}.
\newblock The Astrophysical Journal Supplement Series \textbf{178}(1), 137--177
  (2008).
\newblock \doi{10.1086/588755}

\bibitem{strang1968construction}
Strang, G.: On the construction and comparison of difference schemes.
\newblock {SIAM} Journal on Numerical Analysis \textbf{5}(3), 506--517 (1968).
\newblock \doi{10.1137/0705041}

\bibitem{tadmor1987numerical}
Tadmor, E.: The numerical viscosity of entropy stable schemes for systems of
  conservation laws. i.
\newblock Mathematics of Computation \textbf{49}(179), 91--103 (1987).
\newblock \doi{10.1090/S0025-5718-1987-0890255-3}

\bibitem{tadmor2003entropy}
Tadmor, E.: Entropy stability theory for difference approximations of nonlinear
  conservation laws and related time-dependent problems.
\newblock Acta Numerica \textbf{12}, 451--512 (2003).
\newblock \doi{10.1017/S0962492902000156}

\bibitem{toth2000divergence}
T{\'o}th, G.: The \(\nabla\cdot\boldsymbol b=0\) constraint in shock-capturing
  magnetohydrodynamics codes.
\newblock Journal of Computational Physics \textbf{161}(2), 605--652 (2000).
\newblock \doi{10.1006/jcph.2000.6519}

\bibitem{wimmer2024compatible}
Wimmer, G.A., Tang, X.Z.: Structure preserving transport stabilized compatible
  finite element methods for magnetohydrodynamics.
\newblock Journal of Computational Physics \textbf{501}, 112777 (2024).
\newblock \doi{10.1016/j.jcp.2024.112777}

\bibitem{winters2016mhdentropy}
Winters, A.R., Gassner, G.J.: Affordable, entropy conserving and entropy stable
  flux functions for the ideal {MHD} equations.
\newblock Journal of Computational Physics \textbf{304}, 72--108 (2016).
\newblock \doi{10.1016/j.jcp.2015.09.055}

\bibitem{wu2018positivity}
Wu, K.: Positivity-preserving analysis of numerical schemes for ideal
  magnetohydrodynamics.
\newblock SIAM Journal on Numerical Analysis \textbf{56}(4), 2124--2147 (2018).
\newblock \doi{10.1137/18M1168017}

\bibitem{wu2018provably}
Wu, K., Shu, C.W.: A provably positive discontinuous {Galerkin} method for
  multidimensional ideal magnetohydrodynamics.
\newblock SIAM Journal on Scientific Computing \textbf{40}(5), B1302--B1329
  (2018).
\newblock \doi{10.1137/18M1168042}

\bibitem{wu2026nodal}
Wu, Y., Shu, C.W.: A positivity preserving and entropy stable nodal
  discontinuous {Galerkin} scheme for ideal {MHD}.
\newblock arXiv:2604.23885 (2026).
\newblock \urlprefix\url{https://arxiv.org/abs/2604.23885}

\bibitem{YangFu26}
Yang, J., Fu, G.: An entropy-stable oscillation-eliminating {DGSEM} for the
  {Euler} equations on curvilinear meshes.
\newblock arXiv:2602.16732 (2026).
\newblock \urlprefix\url{https://arxiv.org/abs/2602.16732}

\bibitem{zhang2010positivity}
Zhang, X., Shu, C.W.: On positivity-preserving high order discontinuous
  {Galerkin} schemes for compressible {Euler} equations on rectangular meshes.
\newblock Journal of Computational Physics \textbf{229}(23), 8918--8934 (2010).
\newblock \doi{10.1016/j.jcp.2010.08.016}

\end{thebibliography}

\end{document}